\numberwithin{equation}{section}
\theoremstyle{plain}
\newtheorem{theorem}{Theorem}[section]
\newtheorem{corollary}[theorem]{Corollary}
\newtheorem{lemma}[theorem]{Lemma}
\theoremstyle{definition}
\theoremstyle{remark}
\newtheorem{remark}[theorem]{Remark}
\newcommand{\xra}{\xrightarrow}
\newcommand{\CP}[1]{\mathbb{C}P^{#1}}
\newcommand{\an}[1]{\ensuremath{\mathbf{A}_{n}^{#1}}}
\newcommand{\PP}{\ensuremath{\mathcal{P}^1}}
\newcommand{\Z}{\ensuremath{\mathbb{Z}}}
\newcommand{\z}[1]{\ensuremath{\mathbb{Z}/2^{#1}}}
\newcommand{\zp}[1]{\ensuremath{\mathbb{Z}/p^{#1}}}
\newcommand{\lara}[1]{\ensuremath{\langle #1 \rangle}}
\newcommand{\ov}[1]{\ensuremath{\overline{ #1 }}}
\DeclareMathOperator{\coker}{coker}
\DeclareMathOperator{\im}{im}
\DeclareMathOperator{\Sq}{Sq}
\newcommand{\mat}[4]{\ensuremath{\ensuremath{\left[\begin{smallmatrix}
  #1&#2\\
  #3&#4
\end{smallmatrix}\right]}}}
\newcommand{\matwo}[2]{\ensuremath{\footnotesize{\begin{bmatrix}
  #1\\
  #2
\end{bmatrix}}}}
\newcommand{\smatwo}[2]{\ensuremath{\left[\begin{smallmatrix}
 #1\\
 #2
\end{smallmatrix}\right]}}
\title{The Homotopy Types of Suspended Simply-connected $6$-manifolds}
\author[P. Li]{Pengcheng Li}
\address{Department of Mathematics, School of Sciences, Great Bay University, Dongguan \rm{523000}, China}
\email{lipcaty@outlook.com}
\urladdr{https://lipacty.github.io}
\author[Z. Zhu]{Zhongjian Zhu}
\address{College of Mathematics and Physics, Wenzhou University, Wenzhou \rm{325035}, China}
\email{zhuzhongjian@amss.ac.cn}
\subjclass[2020]{Primary 55P15, 55P40, 57N65}
\keywords{Homotopy Type, Suspension, Manifolds}
\begin{document}

\begin{abstract}
Under the assumption that certain Adem cohomology operation acts trivially on $H^2(M;\z{})$, we  determine the homotopy types of the triple suspension $\Sigma^3M$ of a simply-connected oriented closed topological(or smooth) $6$-manifold $M$, whose integral homology groups can have $2$-torsion. 
\end{abstract}
\maketitle


\section{Introduction}\label{sect:intro}


In \cite{ST19} So and Theriault found that the homotopy decomposition of the (multiple) suspension space of oriented closed manifolds have direct or implicit applications to the characterization of important concepts in geometry and physics, such as topological $K$-theory, gauge groups and current groups.
Since them, the topic of suspension homotopy of manifolds becomes popular, for instance \cite{Huang21,Huang22, Huang-6mflds, CS22,HL,lipc23}. As can be seen in these literatures, the $2$-torsion of the homology groups of the given manifolds usually causes great obstruction to obtain a complete characterization of the homotopy decomposition of the (multiple) suspension. 


When $M$ is a simply-connected oriented closed (topological or smooth) $6$-manifold, Huang \cite{Huang-6mflds} firstly determined the homotopy type of the double suspension $\Sigma^2 M$ under the assumption that $T$ contains no $2$- or $3$-torsions, while Cutler and So \cite{CS22} obtained the homotopy decompositions of $\Sigma M$ after localization away from $2$.
The purpose of this paper is to study the homotopy types of the suspended spaces of simply-connected $6$-manifolds localized at $2$ and further to obtain a complete homotopy classification of the suspended $6$-manifolds under certain assumptions.

Compared to previous works on this topic, the innovations of the techniques used in our work are reflected in the following three aspects:
\begin{enumerate}
	\item  Instead of the basic method of the homology decomposition used in other works, here we use the classification of homotopy types of $(n-1)$-connected finite CW-complexes of dimension at most $n+2$ \cite{Chang50} to determine the homotopy type of the codimension $1$ cellular skeleton of $\Sigma M$;
	\item Much homotopy theory of a class of the indecomposable CW-complexes, called \emph{elementary Chang-complexes} \cite{Chang50}, is used;
	\item The method given in \cite{PanZhuAn4,ZhuPanAn5} is firstly used to get the homotopy wedge decomposition of the (multiple) suspension of the manifold $M$, which is the mapping cone of some map $f\colon S^{2n+1}\to\bigvee_{i=1}^mX_i$, we find appropriate self-homotopy equivalences of $\bigvee_{i=1}^mX_i$  to reduce  $f_{(2)}$  without affecting $f_{(3)}$, and vice versa, where $f_{(p)}$ is the $p$-localization of $f$ for $p=2,3$. In other words, the reduction of  $f_{(2)}$ and $f_{(3)}$ can be independently dealt with and then combine them to get the reduction of $f$.
\end{enumerate}

To make sense of the introduction and state our main results, we need the following notations and conventions. For a finitely generated group $G$ and an integer $n\geq 2$, denote by $P^n(G)$ the $n$-dimensional Peterson space which is characterized by its unique non-trivial reduced integral cohomology group $G$ in dimension $n$. For $n,k\geq 2$, denote $P^n(\Z/k)$ by $P^n(k)$.
The Moore space of homotopy type $(\Z/k,n-1)$ and $\Z/k$ is the group of integers mod $k$. There is a canonical homotopy cofibre sequence for the elementary Moore spaces $P^n(p^r)$ with prime number $p$
\begin{align}
S^{n-1}\xra{p^r}S^{n-1}\xra{i^r_{n-1}}P^n(p^r)\xra{q^r_n}S^n \label{Cof Moore}
\end{align}
where $p^r$ is the degree $p^r$ map, $i^r_{n-1}$ and $q^r_n$ are the canonical inclusion and pinch maps, respectively.  A finite CW-complex is called an \emph{$\an{k}$-complex} if it is $(n-1)$-connected and has dimension at most $n+k$. An $\an{k}$-complex is \emph{indecomposable} if it cannot be homotopically decomposed as a wedge sum of two non-contratible complexes. It is a basic fact that indecomposable $\an{1}$-complexes with $n\geq 2$ consist of spheres and elementary Moore spaces.
Due to Chang\cite{Chang50}, for $n\geq 3$, indecomposable homotopy types of  $\an{2}$-complexes constitute spheres $S^{k} (k=n,n+1,n+2)$; elementary Moore spaces $P^{k}(p^r) (k=n+1,n+2)$ and the following four classes of  \emph{elementary Chang-complexes}:
	\begin{align*}
		C^{n+2}_\eta&=S^n\cup_{\eta} e^{n+2}=\Sigma^{n-2} \CP{2},\quad  C^{n+2}_r=P^{n+1}(2^r)\cup_{i^r_{n}\eta}e^{n+2},\\
		C^{n+2,s}&=S^n\cup_{\eta q^s_{n+1}}\bm C P^{n+1}(2^s), \quad C^{n+2,s}_r=P^{n+1}(2^r)\cup_{i^r_n\eta q^s_{n+1}} \bm C P^{n+1}(2^s).
	\end{align*}
where $r,s$ are positive integers and $\eta=\eta_n:S^{n+1}\to  S^n$ is the (iterated) suspension of the Hopf map $\eta\colon S^3\to S^2$. Let $k\geq 0$ be an integer. We denote the wedge sum $\bigvee_{i=1}^kX$ of $k$ copies of a pointed space $X$ by $kX$ for simplicity.  If $V=\bigvee_{i=1}^{k}X_i$,  denote the wedge sum $\bigvee_{i=1,j\neq i_0}^{k}X_i$ by $V/X_{i_0}$. For a prime $p$, denote by $X_{(p)}$ the $p$-localization of the space $X$ and denote by $X\simeq_{(p)}Y$ if there is a homotopy equivalence between $X$ and $Y$ when localized at $p$.

We also need the following stable cohomology operations. For each prime $p$ and integer $r\geq 1$, there are the higher order Bocksteins $\beta_r$ which detects the degree $2^r$ map on spheres $S^n$. 
For each $n\geq 2$, there are secondary cohomology operations
\begin{equation}\label{eq:Theta}
  \Theta_n\colon S_n(X)\to T_n(X)
 \end{equation}  
 detecting the map $\eta^2\in\pi_{n+2}(S^n)$, where 
 \begin{align*}
  S_n(X)&=\ker(\theta_n)_\sharp=\ker(\Sq^2)\cap \ker(\Sq^2\Sq^1)\\
  T_n(X)&=\coker(\Omega\varphi_n)_\sharp=H^{n+3}(X;\z{})/\im(\Sq^1+\Sq^2). 
\end{align*} 
That is, $\Theta_n$ is based on the null-homotopy of the composition
\[K_n\xra{\theta_n=\smatwo{\Sq^2\Sq^1}{\Sq^2}} K_{n+3}\times K_{n+2}\xra{\varphi_n=[\Sq^1,\Sq^2]}K_{n+4},\]
where $K_m=K(\z{},m)$ denotes the Eilenberg-MacLane space of type $(\z{},m)$.
The map $\eta^3$ can be detected by the tertiary operation $\mathbb{T}$, see \cite[Exercise 4.2.5]{Harperbook}. 
Recall that there is a unique stable secondary operation $\Psi$ of degree $4$, also called the \emph{Adem operation}, based on the Adem relation 
 \[\Sq^4\Sq^1+\Sq^2\Sq^3+\Sq^1\Sq^4=0.\]  
Note that$\Psi$ is defined on classes which annihilated by each $\Sq^1,\Sq^3,\Sq^4$; 
  $\Psi$ detects $2\nu_n$ \cite[Lemma 4.4.4]{Adams60}, where $\nu_n\in \pi_{n+3}(S^n)$ is the Hopf map. For $m\geq 3$, let $\alpha_1(m)=\Sigma^{m-3}\alpha_1(3)$
  be the generator of the $3$-primary component of $\pi_{m+3}(S^m)$. Denote by $\tilde{\alpha}_1(m)$ the possible  lift (if exists) of $\alpha_1(m)$ to $P^m(3^r)$; i.e., $q_m^r\circ \tilde{\alpha}_1(m)=\alpha_1(m)$.
  See \cite[Lemma 2.6]{LPW} or \cite[Chapter XIII]{Todabook} with $p=3$. Note that $\alpha_1(m)$ and $\tilde{\alpha}_1(m)$ can be detected by the Steenrod reduced third power operation $\PP$  (cf. \cite[Chapter 1.5.5]{Harperbook}).
 
Let $M$ be a simply-connected closed oriented topological $6$-manifold whose integral homology groups $H_\ast(M)$ are given by the following table
\begin{equation}\label{HM}
  \begin{tabular}{cccccccc}
    \toprule
$i$& $2$&$3$&$4$&$0,6$&$\text{otherwise}$
  \\\midrule
  $H_i(M)$& $\Z^l\oplus T$&  $\Z^d\oplus T$ & $\Z^l$& $\Z$&$0$
  \\ \bottomrule
  \end{tabular},
\end{equation}
where $T$ is a finitely generated torsion group. 
Let $T_2$ and $T_3$ be the $2$ and $3$-primary components of the torsion group $T$, respectively. Denote  
  \[T= T_2\oplus T_{\geq 3}=T_2\oplus T_3\oplus  T_{\geq 5},\quad  T_2=\bigoplus_{i=1}^{m_2}\z{r_i},~~ T_3=\bigoplus_{j=1}^{m_3}\Z/{3^{r'_j}}.\] 
Under the assumption that the Adem operation $\Psi$ acts trivially on $H^2(M;\z{})$, we determine the $2$-local and integral homotopy types of $\Sigma^3 M$ by the following Theorem \ref{thm:6mfds 2-local} and Theorem \ref{thm:6mfds total}, respectively, in which all the homotopy classes of the attaching maps of the top cell $e^9$ are defined in Section \ref{sec:An2}, especially in Lemma \ref{lem:Moore1}-Lemma \ref{lemma pi(Changcplex)}. 

\begin{theorem}\label{thm:6mfds 2-local}
  Let $M$ be a simply-connected closed oriented $6$-manifold with $H_\ast(M)$ given by (\ref{HM}).
   Assume that the Adem operation $\Psi$ acts trivially on $H^2(M;\z{})$. There are non-negative integers $k, t_i (i=0,1,2,3,4)$, $r_j (0\leq j\leq t_1)$, $\bar{r}_j (0\leq j\leq t_2)$,  $\check{r}_j (0\leq j\leq t_4)$,  $s_j (0\leq j\leq t_0)$, $\hat{s}_j (0\leq j\leq t_3)$,  $\check{s}_j (0\leq j\leq t_4)$ such that $0\leq k+t_2, k+t_3\leq l$ and the equations (\ref{equ:ri}) and (\ref{equ:si}) hold. Denote 
   \begin{equation}\label{equ:V7}
	\begin{aligned}
		V_7=dS^6\vee (l-k-t_3)S^{5}\vee k C_{\eta}^{7}\vee (l-k-t_2) S^{7}\vee (\bigvee_{j=1}^{t_0}P^{7}(2^{s_j}))\\
	\vee(\bigvee_{j=1}^{t_1}P^{6}(2^{r_j}))  \vee (\bigvee_{j=1}^{t_2}C_{\bar{r}_{j}}^{7})\vee (\bigvee_{j=1}^{t_3}C^{7,\hat{s}_j})\vee (\bigvee_{j=1}^{t_4}C_{\check{r}_{j}}^{7,\check{s}_j}). 
	\end{aligned}
	\end{equation}
  \begin{enumerate}[1.]
   \item\label{thm-spin Theta=0, T=0} 
  	 Suppose that the Steenrod square $\Sq^2$ acts trivially on $H^4(M;\z{})$.
	 \begin{enumerate}
		\item If $\Theta$ and  $\mathbb{T}$ act trivially on $H^3(M;\z{})$  and  $H^2(M;\z{})$ respectively, then there is a homotopy equivalence  
		\[\Sigma^3 M\simeq_{(2)} S^9\vee V_7.\] 
	
   \item\label{thm-spin Theta neq0} If $\Theta$ acts non-trivially on $H^3(M;\z{})$,  then the $2$-local homotopy type of $\Sigma^3 M$ is one of the following 
\begin{align*}
	\big(V_7/P^{7}(2^{s_{j_0}})\big)\vee \big(P^{7}(2^{s_{j_0}})\cup_{i_6^{s_{j_0}}\eta^2}e^9\big),&\quad \big(V_7/P^{6}(2^{r_{j_0}})\big)\vee \big(P^{6}(2^{r_{j_0}})\cup_{\tilde{\eta}_{r_{j_0}}\eta}e^9\big),\\[1ex]
	\big(V_7/C^{7}_{\bar{r}_{j_0}}\big)\vee \big(C^{7}_{\bar{r}_{j_0}}\cup_{\bar{i}_P^{\bar{r}_{j_0}}\tilde{\eta}_{\bar{r}_{j_0}}\eta}e^9\big), &\quad \big(V_7/C^{7,\hat{s}_{j_0}}\big)\vee \big(C^{7,\hat{s}_{j_0}}\cup_{\hat{i}^{\hat{s}_{j_0}}_6\eta^2}e^9\big), \\[1ex]
	\big(V_7/C^{7,\check{s}_{j_0}}_{\check{r}_{j_0}}\big)\vee \big(C^{7,\check{s}_{j_0}}_{\check{r}_{j_0}}\cup_{\check{i}_P^{\check{r}_{j_0}}\tilde{\eta}_{\check{r}_{j_0}}\eta}e^9\big),&\quad \big(V_7/C^{7,\check{s}_{j_0}}_{\check{r}_{j_0}}\big)\vee \big(C^{7,\check{s}_{j_0}}_{\check{r}_{j_0}}\cup_{\check{i}^{\check{s}_{j_0},\check{r}_{j_0}}_6\eta^2}e^9\big).
\end{align*}

    \item\label{thm-spin Theta=0, Tneq0} If $\Theta$  acts trivially on $H^3(M;\z{})$, while $\mathbb{T}$ acts non-trivially on $H^2(M;\z{})$, then the $2$-local homotopy type of $\Sigma^3 M$ is one of the following 
\[ \big(V_7/S^5\big)\vee \big(S^5\cup_{\eta^3}e^9\big), \quad \big(V_7/P^{6}(2^{r_{j_0}})\big)\vee \big(P^{6}(2^{r_{j_0}})\cup_{i_5^{r_{j_0}}\eta^3}e^9\big) (r_{j_0}\geq 3).\] 
	 \end{enumerate}

  \item\label{thm-notspin}  Suppose that $\Sq^2$ acts non-trivially on $H^4(M;\z{})$, then there is a homotopy equivalence 
  \[\Sigma^3 M\simeq_{(2)} \big(V_7/P^{7}(2^{s_{j_0}})\big)\vee \big(P^{7}(2^{s_{j_0}})\cup_{\tilde{\eta}_{s_{j_0}}}e^9\big), ~~\text{ or }~~ \Sigma^3 M\simeq_{(2)} \big(V_7/S^7\big)\vee C_{\eta}^9.\] 
    
   \end{enumerate}
\end{theorem}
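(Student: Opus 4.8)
The plan is to present $\Sigma^3 M$ as the mapping cone of a single attaching map onto its codimension‑one skeleton $V_7$, to identify $V_7$ through Chang's classification of $\an{2}$‑complexes, and then to reduce the attaching map to one of the listed normal forms by acting with self-homotopy equivalences of $V_7$; the ``obstructions'' that cannot be removed are exactly those detected by the cohomology operations in the hypotheses. For the first step, Poincar\'e duality forces $H_\ast(M)$ into the shape (\ref{HM}) with the same torsion group $T$ in degrees $2$ and $3$, and $M\setminus\{\ast\}$ is a simply-connected complex of dimension $\le 4$ (a minimal cell structure has no $5$-cells since $H_4(M)$ is torsion-free), so $\Sigma^3 M\simeq V_7\cup_f e^9$ with $V_7:=\Sigma^3(M\setminus\{\ast\})$ and $f=\Sigma^3 g\in\pi_8(V_7)$, where $g\colon S^5\to M\setminus\{\ast\}$ attaches the top cell of $M$. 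As $V_7$ is $4$-connected of dimension $\le 7$ it is an $\an{2}$-complex, so Chang's theorem writes it as a wedge of spheres $S^5,S^6,S^7$, elementary Moore spaces $P^6(2^r),P^7(2^s)$ and elementary Chang-complexes $C^{7}_\eta, C^{7}_r, C^{7,s}, C^{7,s}_r$; the multiplicities are recovered from $H_\ast(\Sigma^3 M)\cong H_{\ast-3}(M)$ together with the action of $\Sq^1,\Sq^2,\Sq^2\Sq^1$ and the higher Bocksteins on $H^\ast(\Sigma^3 M;\z{})\cong H^{\ast-3}(M;\z{})$, which is a complete homotopy invariant of $\an{2}$-complexes. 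In particular the free ranks in degrees $5,6,7$ are $l,d,l$; the operation $\Sq^2\colon H^2(M;\z{})\to H^4(M;\z{})$ produces the $C^{7}_\eta$-summands; and since $T$ contributes its $2$-component $T_2$ both to $\widetilde{H}_5$ and to $\widetilde{H}_6$ of $\Sigma^3 M$, the exponents $r_j,s_j,\dots$ are obtained by distributing the factors $\z{r_i}$ over the Moore/Chang summands carrying torsion in degree $5$, respectively degree $6$; this is the content of the constraints (\ref{equ:ri}), (\ref{equ:si}) and it pins down the decomposition (\ref{equ:V7}).

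The second step reduces the problem to the attaching map. The $2$-local homotopy type of $\Sigma^3 M$ depends only on the orbit of $f_{(2)}\in\pi_8(V_7)_{(2)}$ under post-composition with self-homotopy equivalences of $(V_7)_{(2)}$. Since $V_7$ is $4$-connected, $\pi_8(V_7)$ lies in the stable range by Freudenthal, so there are no Whitehead products and $\pi_8(V_7)_{(2)}=\bigoplus_i\pi_8(X_i)_{(2)}$ over the wedge summands $X_i$. Here $\pi_8(S^6)_{(2)}=\langle\eta^2\rangle\cong\z{}$, $\pi_8(S^7)_{(2)}=\langle\eta\rangle\cong\z{}$ and $\pi_8(S^5)_{(2)}=\langle\nu\rangle\cong\z{3}$, while the groups $\pi_8$ of the Moore- and Chang-summands and the relevant maps between summands (the classes $\eta$, $i^r_n\eta$, $\eta q^s_{n+1}$, the lifts $\tilde\eta_r$, the Bocksteins $\beta_r$, and so on) come from Lemma \ref{lem:Moore1}--Lemma \ref{lemma pi(Changcplex)}. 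A self-homotopy equivalence of the wedge is a matrix $(\phi_{ij})$, $\phi_{ij}\in[X_j,X_i]$, invertible once the summands are ordered compatibly with connectivity and the diagonal entries are equivalences; it sends $f=(f_i)$ to $\bigl(\sum_j\phi_{ij}\circ f_j\bigr)_i$, so the problem becomes a ``row/column reduction'' of the vector $f$ over these homotopy groups.

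For the third step, a component of $f$ that survives every such reduction is detected by a cohomology operation on $C_f=\Sigma^3 M$, namely the corresponding operation on $M$ shifted by $3$: an odd multiple of $\nu$ into an $S^5$-summand by $\Sq^4$ on $H^2(M;\z{})$ (which vanishes by instability), $2\nu$ by the Adem operation $\Psi$ on $H^2(M;\z{})$ (which vanishes by hypothesis), $4\nu=\eta^3$ by $\mathbb{T}$ on $H^2(M;\z{})$, an $\eta^2$-type class into a degree-$6$ cell by $\Theta$ on $H^3(M;\z{})$, and an $\eta$-type class into a degree-$7$ cell by $\Sq^2\colon H^4(M;\z{})\to H^6(M;\z{})$. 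Hence in part \ref{thm-spin Theta=0, T=0}, where $\Sq^2$ vanishes on $H^4(M;\z{})$, no $\eta$ is forced onto a degree-$7$ summand and the $S^5$-components of $f$ lie in $\{0,\eta^3\}$, so the answer is governed by $\Theta$ and $\mathbb{T}$: if both vanish then $f$ is trivialised and $\Sigma^3 M\simeq_{(2)} S^9\vee V_7$; if $\Theta\neq 0$ the surviving $\eta^2$-obstruction is concentrated by the off-diagonal equivalences onto a single summand carrying the appropriate torsion, giving one of the six forms of 1(b); and if $\Theta=0\neq\mathbb{T}$ the surviving $\eta^3$ sits on an $S^5$ or on a $P^6(2^r)$ with $r\ge 3$, giving 1(c). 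In part \ref{thm-notspin}, $\Sq^2\colon H^7(\Sigma^3 M;\z{})\to H^9(\Sigma^3 M;\z{})$ is non-zero, so $f$ must carry an $\eta$-type component into a degree-$7$ cell; reducing the remaining components leaves either an $\eta$ on a free $S^7$, producing $C^{9}_\eta=S^7\cup_\eta e^9$, or, once the free $S^7$-summands have been absorbed, a lift $\tilde\eta_{s_{j_0}}$ on some $P^7(2^{s_{j_0}})$. In every case one closes by verifying that the chosen representative of $f$ genuinely splits the listed cone off $\Sigma^3 M$ as a wedge summand.

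The hard part is this last step. The self-homotopy equivalence group of $V_7$ is large and intricate --- it contains the symmetric groups permuting like summands, the unit groups of the various $\z{r}$ acting on the Moore and Chang summands, and a web of off-diagonal maps ($\eta$, $i^r_n\eta$, $\eta q^s_{n+1}$, the lifts $\tilde\eta_r$, and Bockstein-type maps) --- and one must show that under all of these an obstruction of a given $\eta$-, $\eta^2$-, $\eta^3$- or $\nu$-adic flavour can always be moved onto exactly one summand of the type listed in the theorem, with exactly the stated attaching class, and never leaves an unexpected residue; in particular that an $\eta^2$-obstruction always migrates to a torsion summand rather than remaining on a free $S^6$. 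This requires careful bookkeeping of the relations in the homotopy groups of Section \ref{sec:An2} (the vanishing of $2\nu$ modulo $\Psi$ against $\eta^3\neq 0$, the effect of the pinch and inclusion maps of Moore spaces and Chang-complexes on $\eta$ and $\eta^2$, divisibility facts such as $i^r_5\eta^3=0$ for $r\le 2$), together with the Poincar\'e-duality symmetry of $g$ linking the degree-$5$ and degree-$7$ cells. The free-sphere part of the reduction is routine; essentially all the difficulty lies in the torsion summands, where $\pi_8$ may contain several copies of $\z{}$ of different provenance that the self-equivalences mix.
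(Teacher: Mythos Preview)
Your overall strategy---present $\Sigma^3 M$ as the cone of an attaching map $f\colon S^8\to V_7$, identify $V_7$ via Chang's classification, and reduce $f$ under self-equivalences with the residual obstructions detected by $\Sq^2$, $\Theta$, $\mathbb{T}$, $\Psi$---is the paper's strategy as well. But there is a genuine gap in how you handle the $d$ free $S^6$ summands of $V_7$.

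These summands come from the free part $\Z^d\subset H_3(M)$, and a priori $f$ can carry a nonzero $\eta^2$ component into any of them. You flag this yourself (``an $\eta^2$-obstruction always migrates to a torsion summand rather than remaining on a free $S^6$'') but give no mechanism. There is none available from self-equivalences alone: in the chain of $\prec$-relations (Lemma~\ref{lem:order pi6(X)}) a bare $\eta^2\colon S^8\to S^6$ sits strictly between $\tilde\eta_{r'}\eta$ and $i^{s'}_{6}\eta^2$, so if it is the \emph{only} nonzero $\Theta$-type component it cannot be absorbed, and you would be forced to add $(V_7/S^6)\vee(S^6\cup_{\eta^2}e^9)$ to the list in 1(b). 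The paper rules this out by a geometric input you do not invoke: Wall's splitting theorem for simply-connected $6$-manifolds (via Huang, equation~(\ref{equ:M=M'vee dS4})), which gives $\Sigma M\simeq \Sigma M'\vee dS^4$ with $H_3(M')=T$ purely torsion. The $dS^6$'s are thus split off \emph{before} the attaching map is analysed, so $f$ has no component into them by construction. Your appeal to ``Poincar\'e-duality symmetry of $g$'' is not a substitute; Wall's theorem uses surgery, not just duality.

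A second, smaller point: the paper does not leave the reduction to ``careful bookkeeping'' but carries it out via the explicit $\prec$-chain of Lemma~\ref{lem:order pi6(X)} and the self-equivalences of Lemma~\ref{lem:reduce plus}. These pin down exactly which summand survives and explain, for instance, why the $P^6(2^{r_{j_0}})$-case in 1(c) requires $r_{j_0}\ge 3$ (because $i_5^r\eta^3=4i_5^r\nu=0$ in $\pi_8(P^6(2^r))_{(2)}$ for $r\le 2$). You should state and prove such a chain rather than gesture at matrix reduction.
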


Motivated by the case (\ref{3-local:3}) of Theorem \ref{thm:3-local}, we define 
\begin{flushleft}
	\textbf{Condition $\star$}: There exist some cohomology classes $u,v\in H^{3}(\Sigma M;\Z/3)$ and some index $j_0'$ such that 
\[\PP(u)\neq 0, ~\PP(v)=0, \text{ and }\beta_{r'_{j'_0}}(u+v)\neq 0.\]
\end{flushleft}

\begin{theorem}\label{thm:6mfds total}
	Let $M$ be a simply-connected closed oriented $6$-manifold satisfying the assumption in Theorem \ref{thm:6mfds 2-local}.
	There are non-negative integers $k, t_i (i=0,1,2,3,4)$, $r_j (0\leq j\leq t_1)$, $\bar{r}_j (0\leq j\leq t_2)$,  $\check{r}_j (0\leq j\leq t_4)$,  $s_j (0\leq j\leq t_0)$, $\hat{s}_j (0\leq j\leq t_3)$,  $\check{s}_j (0\leq j\leq t_4)$ such that $0\leq k+t_2, k+t_3\leq l$ and the equations (\ref{equ:ri}) and (\ref{equ:si}) hold. Denote 
	\begin{equation}\label{equ:W7}
		\begin{aligned}
			W_7=dS^6\vee P^6(T)\vee P^7(T)\vee  (l-k-t_3)S^{5}\vee k C_{\eta}^{7}\vee (l-k-t_2) S^{7}\\
			 \vee (\bigvee_{j=1}^{t_2}C_{\bar{r}_{j}}^{7})\vee (\bigvee_{j=1}^{t_3}C^{7,\hat{s}_j})\vee (\bigvee_{j=1}^{t_4}C_{\check{r}_{j}}^{7,\check{s}_j}). 
		\end{aligned}
		\end{equation}
	\begin{enumerate}[1.]
	\item \label{Thm:P1=0}
	Suppose that  $\PP\colon H^{3}(\Sigma M;\Z/3)\to H^{7}(\Sigma M;\Z/3)$ is trivial. 
	\begin{enumerate}
		\item If the operations $\Sq^2$,  $\Theta$ and  $\mathbb{T}$ act trivially on $H^4(M;\z{})$, $H^3(M;\z{})$  and  $H^2(M;\z{})$ respectively,  then there is a homotopy equivalence 
		\[\Sigma^3 M \simeq S^9\vee W_7.\]

		\item\label{thm-total:spin Theta neq0} If  $\Sq^2$ acts trivially on $H^4(M;\z{})$, while $\Theta$ acts non-trivially on $H^3(M;\z{})$,  then the homotopy type of $\Sigma^3 M$ is one of the following 
	\begin{align*}
		\big(W_7/P^{7}(2^{s_{j_0}})\big)\vee \big(P^{7}(2^{s_{j_0}})\cup_{i_6^{s_{j_0}}\eta^2}e^9\big), &\quad \big(W_7/P^{6}(2^{r_{j_0}})\big)\vee \big(P^{6}(2^{r_{j_0}})\cup_{\tilde{\eta}_{r_{j_0}}\eta}e^9\big),\\[1ex]
		\big(W_7/C^{7}_{\bar{r}_{j_0}}\big)\vee \big(C^{7}_{\bar{r}_{j_0}}\cup_{\bar{i}_P^{\bar{r}_{j_0}}\tilde{\eta}_{\bar{r}_{j_0}}\eta}e^9\big),  &\quad \big(W_7/C^{7,\hat{s}_{j_0}}\big)\vee \big(C^{7,\hat{s}_{j_0}}\cup_{\hat{i}^{\hat{s}_{j_0}}_6\eta^2}e^9\big),  \\[1ex]
		\big(W_7/C^{7,\check{s}_{j_0}}_{\check{r}_{j_0}}\big)\vee \big(C^{7,\check{s}_{j_0}}_{\check{r}_{j_0}}\cup_{\check{i}_P^{\check{r}_{j_0}}\tilde{\eta}_{\check{r}_{j_0}}\eta}e^9\big),&  \quad \big(W_7/C^{7,\check{s}_{j_0}}_{\check{r}_{j_0}}\big)\vee \big(C^{7,\check{s}_{j_0}}_{\check{r}_{j_0}}\cup_{\check{i}^{\check{s}_{j_0},\check{r}_{j_0}}_6\eta^2}e^9\big).
	\end{align*}
		\item If $\Sq^2$ and $\Theta$ act trivially on $H^4(M;\Z{})$ and $H^3(M;\z{})$, respectively, while $\mathbb{T}$ acts non-trivially on $H^2(M;\z{})$, then the homotopy type of $\Sigma^3 M$ is one of the following 
	\[\big(W_7/S^5\big)\vee \big(S^5\cup_{\eta^3}e^9\big), \quad \big(W_7/P^{6}(2^{r_{j_0}})\big)\vee \big(P^{6}(2^{r_{j_0}})\cup_{i_5^{r_{j_0}}\eta^3}e^9\big) (r_{j_0}\geq 3).\]
		
		\item If $\Sq^2$ acts non-trivially on $H^4(M;\z{})$.  then there is a homotopy equivalence
		\[\Sigma^3 M\simeq \big(W_7/P^{7}(2^{s_{j_0}})\big)\vee \big(P^{7}(2^{s_{j_0}})\cup_{\tilde{\eta}_{s_{j_0}}}e^9\big), \text{ or }~  \Sigma^3 M\simeq \big(W_7/S^7\big)\vee C_{\eta}^9.\]
		
	\end{enumerate}
	
		\item \label{Thm:P1 neq 0}
	Suppose that  $\PP\colon H^{3}(\Sigma M;\Z/3)\to H^{7}(\Sigma M;\Z/3)$ is non-trivial. 
	Define the space $X$ by 
	\[
		X=\left\{\begin{array}{ll}
			P^6(3^{r'_{j_0'}}),& \text{ if \textbf{Condition $\star$} holds};\\[1ex]
			S^5,& \text{ if \textbf{Condition $\star$} fails and $l-k-t_3\neq 0$};\\[1ex]
			C^7_\eta,& \text{ if \textbf{Condition $\star$} fails, $l-k-t_3= 0$ and $k\neq 0$};\\[1ex]
			C^{7,\hat{s}_1},& \text{ if \textbf{Condition $\star$} fails, $l-k-t_3= 0$ and $k\neq 0$}.
		\end{array}\right. 
	  \]
	Denote by $\alpha_X=\alpha_{1}(5), i_5^{\eta}\alpha_{1}(5), \hat{i}^{\hat{s}_{1}}_5\alpha_{1}(5)$ and  $i_{5}^{r'_{j}}\alpha_{1}(5)$ for $X=S^5, C_{\eta}^7, C^{7,\hat{s}_{1}}$ and $P^{6}(3^{r'_{j_0}})$, respectively.
	\begin{enumerate}
		\item\label{Thm:P1 neq 0:a} If $\Sq^2$,  $\Theta$ and  $\mathbb{T}$ act trivially on $H^3(M;\z{})$  and  $H^2(M;\z{})$ respectively,  then there is a homotopy equivalence
	    \[\Sigma^3 M \simeq  \big(W_7/X\big)\vee  \big(X\cup_{\alpha_X}e^9\big).\]

		\item\label{Thm:P1 neq 0:b} If $\Sq^2$ acts trivially on $H^4(M;\z{})$, while $\Theta$ acts non-trivially on $H^3(M;\z{})$,  then the homotopy type of $\Sigma^3 M$ is one of the following
		\begin{enumerate}
			\item\label{Thm:P1 neq 0:b:1} $\big(W_7/(P^{7}(2^{s_{j_0}})\vee X)\big)\vee \big((P^{7}(2^{s_{j_0}})\vee X)\cup_{\smatwo{i_6^{s_{j_0}}\eta^2}{\alpha_X}}e^9\big)$;
			\item\label{Thm:P1 neq 0:b:2}   $\big(W_7/(P^{6}(2^{r_{j_0}})\vee X)\big)\vee \big((P^{6}(2^{r_{j_0}})\vee X)\cup_{\smatwo{\tilde{\eta}_{r_{j_0}}\eta}{\alpha_X}}e^9\big)$;
			\item\label{Thm:P1 neq 0:b:3}  $\big(W_7/(C^{7}_{\bar{r}_{j_0}} \vee X)\big)\vee \big((C^{7}_{\bar{r}_{j_0}} \vee X)\cup_{\smatwo{\bar{i}_P^{\bar{r}_{j_0}}\tilde{\eta}_{\bar{r}_{j_0}}\eta}{\alpha_X}}e^9\big)$; 
			\item\label{Thm:P1 neq 0:b:4}   $\big(W_7/(C^{7,\hat{s}_{j_0}}\vee X)\big)\vee \big((C^{7,\hat{s}_{j_0}}\vee X)\cup_{\smatwo{\hat{i}^{\hat{s}_{j_0}}_6\eta^2}{\alpha_X}}e^9\big)$; 
			\item\label{Thm:P1 neq 0:b:5}  $\big(W_7/C^{7,\hat{s}_{j_0}}\big)\vee \big(C^{7,\hat{s}_{j_0}}\cup_{\hat{i}^{\hat{s}_{j_0}}_6\eta^2+ \hat{i}^{\hat{s}_{j_0}}_5\alpha_{1}(5)}e^9\big)$; 
			\item\label{Thm:P1 neq 0:b:6} $\big(W_7/(C^{7,\check{s}_{j_0}}_{\check{r}_{j_0}}\vee X)\big)\vee \big((C^{7,\check{s}_{j_0}}_{\check{r}_{j_0}}\vee X)\cup_{\smatwo{\check{i}_P^{\check{r}_{j_0}}\tilde{\eta}_{\check{r}_{j_0}}\eta}{\alpha_X}}e^9\big)$;
			\item\label{Thm:P1 neq 0:b:7} $\big(W_7/(C^{7,\check{s}_{j_0}}_{\check{r}_{j_0}}\vee X)\big)\vee \big((C^{7,\check{s}_{j_0}}_{\check{r}_{j_0}}\vee X)\cup_{\smatwo{\check{i}^{\check{s}_{j_0},\check{r}_{j_0}}_6\eta^2}{\alpha_X}}e^9\big)$. 
		\end{enumerate}
		
		\item\label{Thm:P1 neq 0:c}  If  $\Sq^2$ and $\Theta$  act trivially on $H^4(M;\z{})$ and $H^3(M;\z{})$, respectively, while $\mathbb{T}$ acts non-trivially on $H^2(M;\z{})$, then the homotopy type of $\Sigma^3 M$ is one of the following 
		\begin{enumerate}
			\item\label{Thm:P1 neq 0:c:1} $ \big(W_7/S^5\big)\vee \big(S^5\cup_{4\nu}e^9\big)$;
			\item\label{Thm:P1 neq 0:c:2} $ \big(W_7/(S^5\vee P^{6}(3^{r'_{j'_0}}))\big)\vee \big((S^5\vee P^{6}(3^{r'_{j'_0}}))\cup_{\smatwo{\eta^3}{i_{5}^{r'_{j'_0}}\alpha_{1}(5)}}\!\!e^9\big)$;
			\item\label{Thm:P1 neq 0:c:3} 
			$\big(W_7/(P^{6}(2^{r_{j_0}})\vee X)\big)\vee \big((P^{6}(2^{r_{j_0}})\vee X)\cup_{\smatwo{i_5^{r_{j_0}}\eta^3}{\alpha_X}}e^9\big)$, $r_{j_0}\geq 3$. 
		\end{enumerate}
		
		\item\label{Thm:P1 neq 0:nonspin} If  $\Sq^2$ acts non-trivially on $H^4(M;\z{})$, then 
	the homotopy type of $\Sigma^3 M$ is one of the following
	\begin{enumerate}
		\item $\big(W_7/(P^{7}(2^{s_{j_0}})\vee X)\big)\vee \big((P^{7}(2^{s_{j_0}})\vee X)\cup_{\smatwo{\tilde{\eta}_{s_{j_0}}}{\alpha_X}}e^9\big);$
		\item $\big(W_7/(S^7\vee X)\big)\vee \big((S^7\vee X)\cup_{\smatwo{\eta}{\alpha_X}}e^9\big).$
	\end{enumerate}

	\end{enumerate}

\end{enumerate}
	
\end{theorem}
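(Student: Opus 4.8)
The plan is to reduce the problem to understanding the $9$-cell attachment in $\Sigma^3 M$ by combining the codimension-$1$ skeleton analysis with an independent treatment of the $2$-local and $3$-local data, then to reassemble the global homotopy type. First I would invoke Theorem \ref{thm:6mfds 2-local} to record the $2$-local homotopy type of $\Sigma^3 M$ and the wedge $V_7$ describing its $7$-skeleton; in parallel, I would invoke Theorem \ref{thm:3-local} to record the $3$-local homotopy type, where the relevant new phenomenon is the (possibly) nontrivial action of $\PP$ and the interaction with the higher Bocksteins $\beta_{r'_j}$ captured by \textbf{Condition $\star$}. Away from $6$, the space $\Sigma^3 M$ splits as a wedge of Moore spaces and spheres by \cite{CS22}, so localization at each prime $p\geq 5$ contributes nothing beyond the obvious $P^6(T_{\geq 5})\vee P^7(T_{\geq 5})$ summands already absorbed into $W_7$. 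Thus the integral homotopy type is determined by its $2$- and $3$-localizations together with the rational type, and these must be glued along the common $7$-skeleton $W_7$.

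The next step is the arithmetic-square assembly. Writing $\Sigma^3 M = (\text{$8$-skeleton})\cup_f e^9$ with $f\colon S^8 \to W_7$ (after first showing the $7$- and $8$-skeleta agree, i.e. that $H_8(M)=0$ forces the $8$-skeleton to be homotopy equivalent to $W_7$), I would analyze $f$ via the exact sequence $\pi_8(W_7)_{(2)}\oplus \pi_8(W_7)_{(3)} \to \pi_8(W_7)_{(2)}\otimes\Q \cong \pi_8(W_7)\otimes\Q$. The point made in innovation (3) of the introduction is exactly that one can choose self-equivalences of $W_7$ that simplify $f_{(2)}$ without touching $f_{(3)}$ and conversely, because the relevant unstable homotopy groups of the elementary Chang-complexes and Moore spaces decompose as direct sums of $p$-primary pieces with no "mixing" — each indecomposable wedge summand of $W_7$ absorbs either $2$-torsion attaching data ($\eta^2$, $\eta^3$, $\tilde\eta_r$, the various $\check i$, $\hat i$ maps from Lemmas \ref{lem:Moore1}--\ref{lemma pi(Changcplex)}) or $3$-torsion data ($\alpha_1(5)$ and its lifts $\tilde\alpha_1$), detected by $\Sq^2,\Theta,\mathbb{T}$ on the $2$-side and by $\PP$ together with $\beta_{r'_j}$ on the $3$-side. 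So the reduced normal form of $f$ is obtained by independently putting $f_{(2)}$ into one of the forms dictated by Theorem \ref{thm:6mfds 2-local} and $f_{(3)}$ into one of the forms dictated by Theorem \ref{thm:3-local}, then taking the "product" attaching map $\smatwo{(\cdot)_{(2)}}{\alpha_X}$ on the appropriate wedge $Y_{(2)}\vee X$, which is precisely the shape of every clause in the statement.

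The casework is then organized by a $4\times(\text{2 or 4})$ table: the outer dichotomy is whether $\PP$ is trivial on $H^3(\Sigma M;\Z/3)$ (item \ref{Thm:P1=0}) or not (item \ref{Thm:P1 neq 0}); within each, the four sub-cases are governed by whether $\Sq^2$ acts nontrivially on $H^4(M;\z{})$ (the "non-spin" case, which forces a $\tilde\eta$ or $C_\eta^9$ summand) and, when it does not, by the action of $\Theta$ and then $\mathbb{T}$ on $H^3$ and $H^2$. When $\PP\neq 0$ one must first identify which indecomposable summand of $W_7$ can receive the class $\alpha_1(5)$: this is the definition of the auxiliary space $X$, and the choice among $P^6(3^{r'_{j_0'}})$, $S^5$, $C^7_\eta$, $C^{7,\hat s_1}$ is forced by \textbf{Condition $\star$} (whether the $\PP$-detected class can be made to also support a higher Bockstein, which would let it be housed in a $3$-local Moore space) and then by which spherical/Chang summands are available in $W_7$; the compatibility of this choice with the simultaneous presence of a $2$-local $\eta^2$- or $\eta^3$-attachment is what produces the $\smatwo{\cdot}{\alpha_X}$ attaching maps and, in the exceptional clause \ref{Thm:P1 neq 0:b:5}, the single summand $C^{7,\hat s_{j_0}}\cup_{\hat i^{\hat s_{j_0}}_6\eta^2 + \hat i^{\hat s_{j_0}}_5\alpha_1(5)}e^9$ where both pieces of data land on the same Chang-complex. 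I expect the main obstacle to be precisely this last point: verifying that the $2$-local and $3$-local reductions can be realized by self-equivalences of $W_7$ that are \emph{simultaneously} compatible — i.e. that no self-equivalence needed on the $2$-side undoes a normalization on the $3$-side — and handling the borderline situations (notably $r_{j_0}\geq 3$ for the $\eta^3$ attachments, and the degenerate sub-cases where the summand meant to absorb $\alpha_X$ coincides with one already carrying $2$-torsion data, forcing the merged attaching map rather than a wedge) requires a careful bookkeeping of $\mathrm{Aut}(W_7)$ acting on $\pi_8(W_7)$, which I would do summand-by-summand using the homotopy groups computed in Section \ref{sec:An2}.
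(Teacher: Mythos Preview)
Your proposal is correct in outline and lands on the same strategy the paper uses: reduce the integral attaching map $S^8\to W_7$ by simplifying its $2$-primary and $3$-primary parts independently, then read off the cofibre. The paper, however, never actually invokes an arithmetic square. Since $\pi_8(W_7)$ is finite torsion (all summands of $W_7$ have finite $\pi_8$), the fracture-square step is vacuous---$\pi_8(W_7)\otimes\Q=0$---and the decomposition $f=f_{(2)}+f_{(3)}$ is just the primary decomposition of a torsion element. The paper instead works throughout with the \emph{integral} map $\Sigma h''$ of (\ref{equ:Sigma g7}), rewrites $4\nu$ as $\eta^3+\alpha_1(5)$ to separate the primes, and then performs both reductions by explicit integral self-equivalences of $\Sigma V_6''$.

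The obstacle you correctly flag---that a self-equivalence normalizing $f_{(2)}$ must not disturb $f_{(3)}$---is resolved in the paper not by a general bookkeeping of $\mathrm{Aut}(W_7)$ but by the concrete relations of Lemma~\ref{lem:total order 2, 3}. For instance, $\alpha_1(n)+\eta^3\prec\alpha_1(n)$ via multiplication by $-2$, and $\alpha_1(n)+\eta^3\prec\eta^3$ via multiplication by $3$; similar explicit integral maps $\mu^s,\mu_s^{s'},\lambda_s^{s'},\theta_{s'}^s\colon C^{n+2,s'}\to C^{n+2,s}$ handle the Chang-complex summands. These relations are exactly what lets the paper combine the $2$-local reduction of Theorem~\ref{thm:6mfds 2-local} with the $3$-local reduction of Theorem~\ref{thm:3-local} into a single integral normal form, and they are also what forces the exceptional merged case \ref{Thm:P1 neq 0:b:5} when the only summand available to receive $\alpha_1(5)$ is the same $C^{7,\hat s_{j_0}}$ already carrying $\hat i_6^{\hat s_{j_0}}\eta^2$. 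Your plan would arrive at the same place, but the paper's direct integral approach sidesteps the lifting problem you anticipated.
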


If, in addition, the $6$-manifold $M$ in the above two theorems is smooth,  then the case  (\ref{thm-spin Theta=0, Tneq0}) in Theorem \ref{thm:6mfds 2-local} and the cases (\ref{thm-total:spin Theta neq0}),  (\ref{Thm:P1 neq 0:b}) in Theorem \ref{thm:6mfds total} can be removed, since the secondary operation $\Theta$  acts trivially on the mod $2$ cohomology ring of spin manifolds, see \cite{Thomas67} or \cite[page 32-33]{MMbook}.  

Recall that there hold the Wu formulae \cite{Wu54,MS74}:
\begin{align*}
	&\Sq^2(x)=w_2(M)\smallsmile x \text{ for any $x\in H^4(M;\z{})$},\\
	&\PP(x)=(p_1(M)\text{ mod } 3)\smallsmile x\text{ for any $x\in H^2(M;\Z/3)$},
\end{align*}
	where $w_2(M)\in H^2(M;\z{})$ and $p_1(M)\in H^4(M;\Z)$ are the second Stiefel-Whitney class and the first Pontryagin class of the tangent bundle of $M$, respectively.
It follows that one may alternatively replace the conditions that the Steenrod square $\Sq^2$ acts trivially on $H^4(M;\z{})$ by $w_2(M)=0$, and that $\PP$ acts trivially on $H^3(\Sigma M;\z{})$ by the mod $3$ reduction of $p_1(M)$ and a homotopy invariant $\mathrm{rad}_{p_1}(M)$, called the $p_1$-radius of $M$; see \cite{HL} for more details.  

On the other hand, although we list all the possibilities of homotopy types of $\Sigma^3 M$ of the given $6$-manifolds, we cannot guarantee that all possibilities could happen or find a concrete $6$-manifold as an illustrating example. This is partially due to our lack of knowledge about manifolds. 

The paper is arranged as follows. In Section \ref{sec:An2} we introduce the global notations and computed some homotopy groups of $\an{2}$-complexes, $n\geq 3$. Section \ref{sec:coho-operat} covers some auxiliary lemmas on cohomology operations mentioned in Section \ref{sect:intro}. In Section \ref{sect:proofs} we analyze the homotopy type of the triple suspension $\Sigma^3M$ and prove Theorem \ref{thm:6mfds 2-local} and \ref{thm:6mfds total}.

\subsection*{Acknowledgements}
Pengcheng Li was supported by National Natural Science Foundation of China (Grant No. 12101290), Zhongjian Zhu was supported by National Natural Science Foundation of China
(Grant No. 11701430).

\section{Notations and some homotopy theory of $\an{2}$-complexes}\label{sec:An2}

Throughout the paper all spaces are based CW-complexes, all maps are base-point-preserving and are identified with their homotopy classes in notation. We shall globally use the following matrix notations to denote maps between wedge sums of spaces.
Let $X=\Sigma X'$, $Y_i=\Sigma Y_i'$ be suspensions, $i=1,2,\cdots,n$. Let \[j_l\colon Y_l\to \bigvee_{i=1}^nY_i,\quad p_k\colon \bigvee_{i=1}^n Y_i\to Y_k\] be respectively the canonical inclusions and projections, $1\leq k,l\leq n$. By the Hilton-Milnor theorem,  we can denote a map
\(f\colon X\to\bigvee_{i=1}^nY_i\)
by \begin{align}
	f=\sum_{k=1}^n j_k\circ f_{k}+\theta_f (\text{or }f=\sum_{k=1}^n  f_{k}+\theta_f) , \label{equ:f}
\end{align}
where $f_{k}=p_k\circ f\colon X\to Y_k$, $\theta$ is a linear combination of Whitehead products and satisfies $\Sigma \theta=0$.
The part $\sum_{k=1}^n j_k\circ f_{k}$ of (\ref{equ:f}) is conveniently denoted by a column vector $u_f=[f_1,f_2,\cdots,f_n]^t$, so $f=u_f+\theta_f$. 
 We denote a self-map $h$ of $\bigvee_{i=1}^nY_i$ with $\theta_h=0$ in the matrix form 
\begin{align}
M_h\coloneqq [h_{kl}]_{n\times n}=\begin{bmatrix}
	h_{11}&h_{12}&\cdots&h_{1n}\\
	h_{21}&h_{22}&\cdots&h_{2n}\\
	\vdots&\vdots&\ddots&\vdots\\
	h_{n1}&h_{n1}&\cdots&h_{nn}
\end{bmatrix}, \label{matrix (h)}
\end{align}
where $h_{kl}=p_k\circ h\circ j_l\colon Y_l\to Y_k$.
If $X_j$, $Y_i$ are suspended spaces and $h\colon \bigvee_{j=1}^{m}X_j\to  \bigvee_{i=1}^{n} Y_i$ is a map determined by its components $h_{kl}=p_khj_l\colon X_l\to  Y_k$, then we also denote $h$ in the matrix form $h=[h_{kl}]_{n\times m}$.
Given another map $g\colon X\to\bigvee_{i=1}^nY_i$,
the composition law
\(h(f+g)=h f+h g\)
implies that the matrix multiplication
\(M_h[f_1,f_2,\cdots,f_n]^t\)
 represents the composite $h\circ f$.
We call two maps $f=u_f$ and $g=u_g$ are \emph{equivalent}, denoted by $f\sim g$ or 
\[[f_1,f_2,\cdots,f_n]^t\sim [g_1,g_2,\cdots,g_n]^t,\]
if there is a self-homotopy equivalence $h$ of $\bigvee_{i=1}^n Y_i$, which can be represented by the matrix $M_h$, such that 
\[M_h[f_1,f_2,\cdots,f_n]^t=[g_1,g_2,\cdots,g_n]^t.\] 
Note that the above matrix multiplication refers to elementary row operations in matrix theory; and the homotopy cofibres of the maps $f=u_f$ and $g=u_g$ are homotopy equivalent if $f$ and $g$ are equivalent.

The homotopy theory of elementary Chang complexes have been extensively studied, see \cite{ZP17,ZP21,ZLP19,ZP23,lipc22}.  
There are some homotopy cofibre sequences for elementary Chang complexes (denoted by \textbf{Cof. List}, cf. \cite{ZP17,lipc22}), which give some notations for some canonical maps between $\an{2}$-complexes, $n\geq 4$. In particular, the maps endowed with the labels ``$i$'' and ``$q$'' are the canonical  inclusions and pinch maps, respectively.
  
\textbf{Cof. List:}

\begin{enumerate}[1.]
	\item Homotopy cofibre sequence for $C^{n+2}_\eta$: 
	   \[S^{n+1}\xra{\eta_n}S^n\xra{i^{\eta}_n}C^{n+2}_\eta\xra{q^{\eta}_{n+2}}S^{n+2}.\]
	\item Homotopy cofibre sequences for $C^{n+2}_r$: 
	\begin{align*}
	\textbf{Cof1}:~~& S^{n+1}\vee S^{n}\xra{[\eta_n,2^r]}S^{n}\xra{\bar{i}_{n}^r}C^{n+2}_r\xra{\smatwo{\bar{q}_{n+2}^r}{\bar{q}_{n+1}^r}}S^{n+2}\vee S^{n+1},\\
	\textbf{Cof2}:~~	&S^{n+1}\xra{i^{r}_n\eta_n}P^{n+1}(2^r)\xra{\bar{i}_{P}^r}C^{n+2}_r\xra{\bar{q}_{n+2}^r}S^{n+2},\\
	\textbf{Cof3}:~~	&S^{n}\xra{2^ri_{n}^{\eta}}C^{n+2}_\eta\xra{\bar{i}_\eta}C^{n+2}_r\xra{\bar{q}_{n+1}^r}S^{n+1};
	\end{align*}
	\item Homotopy cofibre sequences for $C^{n+2,s}$:  
	 \begin{align*}
		\textbf{Cof1}:~~& S^{n+1}\xra{\smatwo{2^s}{\eta_n}} S^{n+1}\vee S^n\xra{[\hat{i}_{n+1}^{s},\hat{i}_{n}^{s}]}C^{n+2,s}\xra{\hat{q}_{n+2}^{s}}S^{n+2},\\
		\textbf{Cof2}:~~& C_{\eta}^{n+1}\xra{2^sq^{\eta}_{n+1}} S^{n+1}\xra{\hat{i}_{n+1}^{s}}C^{n+2,s}\xra{\hat{q}_{\eta}}C_{\eta}^{n+2},\\
		\textbf{Cof3}:~~& P^{n+1}(2^s)\xra{\eta q_{n+1}^s} S^n\xra{\hat{i}_{n}^{s}}C^{n+2,s}\xra{\hat{q}_{P}}P^{n+2}(2^s);
	 \end{align*}
	 \item Homotopy cofibre sequences for $C^{n+2,s}_r$:	 
\begin{align*}
	\textbf{Cof1}:~~&S^{n+1}\vee S^{n}\xra{\mat{2^s}{\eta_n}{0}{2^r}}S^{n+1}\vee S^{n}\xra{[\check{i}_{n+1}^{s,r},\check{i}_n^{s,r}]}C^{n+2,s}_r\xra{\smatwo{\check{q}_{n+2}^{s,r}}{\check{q}_{n+1}^{s,r}}}S^{n+2}\vee S^{n+1},\\
	\textbf{Cof2}:~~&P^{n+1}(2^s)\xra{i_{n}^r\eta q_{n+1}^s}P^{n+1}(2^r)\xra{\check{i}_P^r}C^{n+2,s}_r\xra{\check{q}_{P}^s}P^{n+2}(2^s),\\
	\textbf{Cof3}:~~& S^n\xra{2^r\hat{i}_n^s} C^{n+2,s}\xra{\check{i}_{C}^s} C^{n+2,s}_r\xra{\check{q}_{n+1}^{s,r}} S^{n+1},\\
	\textbf{Cof4}:~~&C^{n+1}_r\xra{2^s\bar{q}_{n+1}^{r}}S^{n+1}\xra{\check{i}_{n+1}^{s,r}}C^{n+2,s}_r\xra{\check{q}_C^r}C^{n+2}_r.
\end{align*}

\end{enumerate}

\medskip
We will frequently use the following generators of homotopy groups of spheres (cf. \cite{Todabook}):
\begin{enumerate}[(1)]
	\item $\pi_{m+1}(S^m)\cong\z{}\langle \eta_m \rangle$, $\pi_{m+2}(S^m)\cong \z{}\langle \eta_m^2 \rangle$; $\pi_{m+3}(S^m;3)\cong\Z/3\langle \alpha_1(m)\rangle$ for $m\geq 3$, where $\pi_k(X;p)$ denotes the $p$-primary component of $\pi_k(X)$;
	\item  $\pi_6(S^3)\cong\Z/12\langle \nu' \rangle$; $\pi_7(S^4)\cong \Z\langle \nu_4\rangle\oplus\Z/12\langle \Sigma \nu'\rangle$,  $\pi_{m+3}(S^m)\cong\Z/{24}\langle \nu_m \rangle\cong\Z/8\lara{3\nu_m}\oplus \Z/3\lara{\alpha_1(m)}$ for $m\geq 5$. There hold formulae 
 \[\eta_3^3=6\nu',\quad \eta_m^3=12\nu_m,\quad 2\nu_m=\Sigma^{m-3}\nu' \text{ for }m\geq 5.\] 
\end{enumerate}
For simplicity, in the remainder of this paper we denote by $\eta$ and $\nu$ the iterated suspensions $\eta_m (m\geq 3)$ and $\nu_m (m\geq 5)$, respectively. Denote $m_{r}^s=\min\{r,s\}$ for positive integers $r,s$.  Set $\delta_1=1$ and $\delta_r=0$ for $r\geq 2$.  

\begin{lemma}[cf. \cite{BH91}]\label{lem:Moore1}
    Let $n\geq 3,r\geq 1$ be integers and let $p$ be a prime.  The followings hold:
   \begin{enumerate}[(1)]
    \itemsep=1ex
    \item $\pi_n(P^{n+1}(p^r))\cong\zp{r}\langle i^r_n\rangle$.
    \item $\pi_{n+1}(P^{n+1}(p^r))\cong \left\{\begin{array}{ll}
  \Z/2\langle i^r_n\eta\rangle,&p=2;\\
  0,&p\geq 3.
    \end{array}\right.$

    \item $\pi_{n+2}(P^{n+1}(p^r))\cong \left\{\begin{array}{ll}
      \Z/4\langle \tilde{\eta}_1\rangle,&p=2,~r=1;\\
      \Z/2 \langle i^r_n\eta^2\rangle\oplus \Z/2\langle \tilde{\eta}_r \rangle,&p=2,~r\geq 2;\\
      0,&p\geq 3. 
    \end{array}\right.$
    \\
  The generator $\tilde{\eta}_r$ satisfies formulae
  \begin{equation}\label{eq:eta}
    q_{n+1}^r\tilde{\eta}_r=\eta,\quad 2\tilde{\eta}_1=i_n^1\eta^2.
  \end{equation}
  \item\label{Moore1-PP} $[P^{n+1}(2^r),P^{n+1}(2^s)]\cong \left\{\begin{array}{ll}
    \Z/4\langle 1_P\rangle,&r=s=1;\\
    \z{m_r^s}  \langle B(\chi^r_s) \rangle\oplus\Z/2 \langle i^r_n\eta q^r_{n+1}\rangle,&\text{$r$ or $s>1$},
  \end{array}\right.$ \\
where $B(\chi^r_s)$ satisfies $\Sigma B(\chi^r_s)=B(\chi^r_s)$ and 
\begin{equation}\label{eq:chi}
  B(\chi^r_s)i_{n}= \left\{\begin{array}{ll}
        i_{n}^s,&r\geq s;\\
        2^{s-r}i_{n}^s,&r\leq s.
      \end{array}\right. ~~q_{n+1}^sB(\chi^r_s)= \left\{\begin{array}{ll}
        2^{r-s}q_{n+1}^r,&r\geq s;\\
        q_{n+1}^r,&r\leq s.
      \end{array}\right.
\end{equation}
\begin{equation}\label{eq:chi-eta}
  B(\chi^r_s)\tilde{\eta}_r= \tilde{\eta}_s \text{ for }s\geq r.
\end{equation}
\item\label{Moore1-PP-odd} $[P^{n+1}(p^r),P^{n+1}(p^s)]\cong\zp{m_r^s}\langle B(\chi^r_s)\rangle$ for odd primes $p$.
   \end{enumerate}

\end{lemma}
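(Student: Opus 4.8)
The plan is the standard cofibre-sequence analysis of the elementary Moore space. Replacing $n$ by $n+1$ in \eqref{Cof Moore} gives the cofibre sequence
\[S^n\xra{p^r}S^n\xra{i^r_n}P^{n+1}(p^r)\xra{q^r_{n+1}}S^{n+1},\]
and, since $n\ge 3$, everything to be computed lies in the range where $\pi_n(S^n)=\Z\lara{\iota_n}$, $\pi_{n+1}(S^n)=\z{}\lara{\eta}$ and $\pi_{n+2}(S^n)=\z{}\lara{\eta^2}$. Part $(1)$ is immediate from the Hurewicz theorem: $P^{n+1}(p^r)$ is $(n-1)$-connected with $H_n=\zp{r}$, a generator being hit by $i^r_n$. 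For $(2)$ and $(3)$ I would feed the cofibre sequence into the long exact homotopy sequence of the pair $(P^{n+1}(p^r),S^n)$; by the Blakers--Massey theorem the collapse map induces an isomorphism $\pi_i(P^{n+1}(p^r),S^n)\cong\pi_i(S^{n+1})$ for $i\le n+2$, under which the boundary homomorphism is multiplication by $p^r$ (it carries the characteristic class of the top cell to the attaching map $p^r\iota_n$). Hence, in the range needed,
\[\pi_k(S^n)\xra{\cdot p^r}\pi_k(S^n)\xra{(i^r_n)_\ast}\pi_k(P^{n+1}(p^r))\xra{\partial}\pi_{k-1}(S^n)\xra{\cdot p^r}\pi_{k-1}(S^n)\]
is exact, $\partial$ being $(q^r_{n+1})_\ast$ followed by the desuspension isomorphism. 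For $p$ odd, multiplication by $p^r$ is an isomorphism on the relevant copies of $\z{}$, so $\pi_{n+1}=\pi_{n+2}=0$; for $p=2$ it vanishes, yielding $\pi_{n+1}(P^{n+1}(2^r))=\z{}\lara{i^r_n\eta}$ and a short exact sequence $0\to\z{}\lara{i^r_n\eta^2}\to\pi_{n+2}(P^{n+1}(2^r))\to\z{}\to 0$ whose quotient is generated by the class of any lift $\tilde\eta_r$ of $\eta$ through $q^r_{n+1}$.

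For $(4)$ and $(5)$ I would instead apply $[-,P^{n+1}(p^s)]$ to the same cofibre sequence and insert $(1)$ and $(2)$ at the outer terms. For $p$ odd the $\pi_{n+1}$-term is absent and one obtains $[P^{n+1}(p^r),P^{n+1}(p^s)]\cong\zp{m_r^s}$ at once, which is $(5)$; for $p=2$ one obtains a short exact sequence $0\to\z{}\lara{i^s_n\eta q^r_{n+1}}\to[P^{n+1}(2^r),P^{n+1}(2^s)]\to\z{m_r^s}\to 0$. The maps $B(\chi^r_s)$ are then produced by hand: take on the bottom spheres the degree $2^{\max(0,s-r)}$ map, check that the induced composite into $P^{n+1}(2^s)$ is nullhomotopic (so the map extends over the top cell), and read off the formulas \eqref{eq:chi} and \eqref{eq:chi-eta} from naturality of the cofibre sequences and a comparison of lifts of $\eta$. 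At this stage all the orders are pinned down and only the \emph{extensions} are left open: the value of $2\tilde\eta_r$ in $(3)$ and the order of $B(\chi^r_s)$ in $(4)$.

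Deciding these extensions is the only non-formal step, and is where the difficulty lies. The single external input I would use is the classical non-triviality $2\cdot\mathrm{id}_{P^{n+1}(2)}=i^1_n\eta q^1_{n+1}\neq 0$ (the mod $2$ Moore spectrum admits no ring structure; equivalently $\eta^2\in\lara{2,\eta,2}$ in the stable stems), so that $\pi_{n+2}(P^{n+1}(2))\cong\Z/4$; this is implicit in \cite{BH91}. Granting it, $2\tilde\eta_1=(2\,\mathrm{id}_{P^{n+1}(2)})\circ\tilde\eta_1=i^1_n\eta q^1_{n+1}\circ\tilde\eta_1=i^1_n\eta^2$, so $\tilde\eta_1$ has order $4$; and for $r\ge 2$, writing $\tilde\eta_r=B(\chi^1_r)\tilde\eta_1$ and using $B(\chi^1_r)i^1_n=2^{r-1}i^r_n$ from \eqref{eq:chi} gives $2\tilde\eta_r=2^{r-1}i^r_n\eta^2=0$, which settles $(3)$. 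For $(4)$, the case $r=s=1$ is the non-split $\Z/4$ above; when $r$ or $s>1$ one first deduces $2^s\,\mathrm{id}_{P^{n+1}(2^s)}=0$ for $s\ge 2$ — were it $i^s_n\eta q^s_{n+1}$ instead, composing with $\tilde\eta_s$ (of order $2$ by $(3)$) would force $0=2^s\tilde\eta_s=i^s_n\eta^2\neq 0$ — and then $2^{m_r^s}B(\chi^r_s)=0$ follows by pre- or post-composing $B(\chi^r_s)$ with $2^{m_r^s}\,\mathrm{id}$ of the appropriate Moore space, using \eqref{eq:chi} together with the $r=s=1$ relation in the subcases where $m_r^s=1$; hence the sequence splits. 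Thus, beyond routine manipulation of the exact sequences and the cell-wise formulas, the whole argument rests on the single bracket relation $\lara{2,\eta,2}=\eta^2$ — that is the main obstacle.
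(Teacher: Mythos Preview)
The paper gives no proof of this lemma; it is stated with the attribution ``cf.\ \cite{BH91}'' and used as input. Your cofibre-sequence derivation is correct and is exactly how one would recover these results from scratch, with the single nontrivial input (the Toda relation $\lara{2,\eta,2}=\eta^2$, equivalently $2\cdot 1_{P^{n+1}(2)}=i^1_n\eta q^1_{n+1}$) properly isolated and used to settle all the extensions. One small point: the identity \eqref{eq:chi-eta} only determines $\tilde\eta_s$ up to $i^s_n\eta^2$ for $s\ge 2$, so it should be read as a coherent \emph{choice} of the lifts $\tilde\eta_s$ (e.g.\ $\tilde\eta_s\coloneqq B(\chi^1_s)\tilde\eta_1$), after which your verification goes through.
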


\begin{lemma}\label{lem:pi7(P(pr))} Let $p$ be a prime number and denote $\Z/2^0=\Z/\Z=\{0\}$.
 \begin{enumerate}[(1)]
	\itemsep=1ex
	\item $\pi_7(P^{5}(2^r))\cong\Z/2^{r+1}\lara{i_4^r\nu_4}\oplus \Z/2\lara{\tilde{\eta}_r\eta}\oplus\Z/2^{m_{r-1}^2}\lara{i_4^r\Sigma \nu'}$.
	\item $\pi_7(P^{5}(p^r))\cong\left\{\begin{array}{ll}
		\Z/3^r\lara{i_4^r\nu_4}\oplus \Z/3\lara{i_4^r\Sigma \nu'},&p=3;\\
		\Z/p^{r}\lara{i_4^r\nu_4},&p\geq 5.
	\end{array}\right.$

	\noindent For $n\geq 8$,
	\item $\pi_{n+2}(P^{n}(2^r))\cong\Z/2^{m_{r}^3}\lara{i_n^r\nu}\oplus \Z/2\lara{\tilde{\eta}_r\eta}$ ;
	
	\item $\pi_{n+2}(P^n(p^r))\cong\left\{
	\begin{array}{ll}
		\Z/3\lara{i_n^r\nu}=\Z/3\lara{i_n^r\alpha_1(n)}, & \hbox{$p=3$;} \\
		0,& \hbox{$p\geq 5$.}
	\end{array}
	\right.$
\end{enumerate}
\end{lemma}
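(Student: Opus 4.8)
The plan is to compute $\pi_{n+2}(P^n(p^r))$ for $n \geq 8$ (the stable range for these spaces, since $P^n$ is $(n-1)$-connected and we look at homotopy in degree $n+2 \leq 2n-2$) by combining the cofibre sequence (\ref{Cof Moore}) with the known stable homotopy groups of spheres. I would set up the long exact sequence induced by applying $\pi_{n+2}(-)$ to
\begin{align*}
S^{n-1}\xra{p^r}S^{n-1}\xra{i^r_{n-1}}P^n(p^r)\xra{q^r_n}S^n,
\end{align*}
together with the one coming from the cofibration $P^n(p^r) \xra{q^r_n} S^n \xra{} \Sigma P^n(p^r) = P^{n+1}(p^r)$ (i.e. the Puppe sequence continued one step), which gives the exact sequence
\[
\pi_{n+2}(S^{n-1}) \xra{p^r_*} \pi_{n+2}(S^{n-1}) \xra{(i^r_{n-1})_*} \pi_{n+2}(P^n(p^r)) \xra{(q^r_n)_*} \pi_{n+2}(S^n) \xra{\partial} \pi_{n+1}(S^{n-1}).
\]

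For $p=2$: in the stable range $\pi_{n+2}(S^{n-1}) \cong \pi_2^s \cong \Z/2\lara{\nu'}$ — wait, more precisely $\pi_{n+2}(S^{n-1}) \cong \Z/24$ for $n-1 \geq 5$, generated by $\nu$; $\pi_{n+2}(S^n) \cong \Z/2\lara{\eta^2}$; $\pi_{n+1}(S^{n-1}) \cong \Z/2\lara{\eta^2}$. The boundary map $\partial\colon \pi_{n+2}(S^n) \to \pi_{n+1}(S^{n-1})$ is composition with the attaching map $2^r \in \pi_{n-1}(S^{n-1})$ via the connecting map, hence is $2^r \cdot (\eta^2) = 0$ since $2\eta^2 = 0$; so $(q^r_n)_*$ is surjective and contributes a $\Z/2\lara{i^r_n\eta^2}$ summand. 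The map $p^r_* = 2^r$ on $\pi_{n+2}(S^{n-1}) \cong \Z/24$ has cokernel $\Z/2^{m_r^3}$, which is the image of $(i^r_{n-1})_*$; this gives the $\Z/2^{m_r^3}\lara{i_n^r\nu}$ summand. I would then need to argue the extension splits, using the map $\tilde\eta_r$ from Lemma~\ref{lem:Moore1}(3) and its relation $q^r_{n+1}\tilde\eta_r = \eta$ (suspended) to realize a splitting $\pi_{n+2}(P^n(2^r)) \cong \Z/2^{m_r^3}\lara{i_n^r\nu} \oplus \Z/2\lara{\tilde\eta_r\eta}$. Note $i_n^r\eta^2$ and $\tilde\eta_r\eta$ differ by an element of the image of $i^r$; checking they generate the same $\Z/2$ or how the generators match up is the delicate bookkeeping point.

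For $p = 3$: $\pi_{n+2}(S^{n-1}) \cong \Z/24$ has $3$-component $\Z/3\lara{\alpha_1}$, and $3^r_*$ is zero on this $\Z/3$, so $(i^r_{n-1})_*$ is injective on the $3$-part giving $\Z/3\lara{i_n^r\alpha_1} = \Z/3\lara{i_n^r\nu}$; meanwhile $\pi_{n+2}(S^n)_{(3)} = 0$ and $\pi_{n+1}(S^{n-1})_{(3)} = 0$, so that is the whole group. For $p \geq 5$: $\pi_{n+2}(S^{n-1})_{(p)} = 0 = \pi_{n+2}(S^n)_{(p)}$ so $\pi_{n+2}(P^n(p^r)) = 0$. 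The same strategy handles parts (1) and (2) with $n = 4$ (the low-dimensional case), except that there $\pi_7(S^4) = \Z\lara{\nu_4}\oplus\Z/12\lara{\Sigma\nu'}$ is not stable, $\pi_7(S^5) \cong \Z/24$, $\pi_6(S^4)\cong\Z/2$, and one must track the free part: the cofibre sequence $S^4 \xra{2^r} S^4 \to P^5(2^r) \to S^5$ — no wait, we want $\pi_7(P^5(2^r))$, so use $S^4 \xra{2^r} S^4 \xra{i^r_4} P^5(2^r) \xra{q^r_5} S^5$ and the continuation $P^5(2^r) \xra{q^r_5} S^5 \to P^6(2^r)$. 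Here $\pi_7(S^4) \xra{2^r}$ has the free generator $\nu_4$ mapping to $2^r\nu_4$ and the torsion $\Sigma\nu'$ (order $12$) mapping to $2^r\Sigma\nu'$, while $\partial\colon \pi_7(S^5) \cong \Z/24 \to \pi_6(S^4) \cong \Z/2$ needs to be identified (it is composition related to $2^r$, hence zero), so $(q^r_5)_*$ is onto $\pi_7(S^5)\cong \Z/24 \cong \Z/8 \oplus \Z/3$; combined with the $i^r_4$-image of $\coker(2^r\colon \Z\oplus\Z/12 \to \Z\oplus\Z/12)$ one reads off $\Z/2^{r+1}\lara{i_4^r\nu_4}$ (from $\Z \xrightarrow{2^r}\Z$ contributing... careful: $\pi_7(P^5(2^r))$ gets $i_4^r\nu_4$ of order dividing the relevant value — the $\Z/2^{r+1}$ comes from the interaction with $q^r_5$ pulling in half of $\Z/24$'s $2$-part), the $\Z/2\lara{\tilde\eta_r\eta}$ from the $\eta^2$-detecting part, and $\Z/2^{m_{r-1}^2}\lara{i_4^r\Sigma\nu'}$ from $\coker(2^r\colon \Z/12 \to \Z/12)$ restricted to its $2$-torsion $\Z/4$, i.e. $\Z/2^{\min(r-1,2)}$.

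The main obstacle I expect is pinning down the connecting homomorphisms and the extension problems precisely — in particular determining that the boundary maps $\partial$ are zero (so the $q$-induced maps are surjective) and resolving the non-split-looking extensions via the explicit elements $\tilde\eta_r$, $i_n^r\nu$, and the relations (\ref{eq:eta}); the fine point that $i_n^r\nu$ has order exactly $2^{m_r^3}$ (resp. $2^{r+1}$ for $n=4$) rather than just $2^3$ requires using that $q^r_n$ detects these classes and that $2^r\nu$ together with the attaching data forces the stated order. For the odd-primary and $p\geq 5$ cases the argument is essentially formal once the relevant stable stems are recalled. I would also invoke Lemma~\ref{lem:Moore1} to identify generators like $i_n^r\eta^2$ consistently and to justify notation such as $\tilde\eta_r\eta$, and cite \cite{Todabook} for all unstable homotopy group inputs in the $n=4$ cases.
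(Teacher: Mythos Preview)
Your approach to parts (3) and (4) is exactly what the paper does: it says these groups lie in the stable range and are ``easy to get by the exact sequence of stable homotopy groups induced by the cofibration sequence (\ref{Cof Moore}).'' Your write-up is more detailed than the paper's one-line remark, and the computations (cokernel of $2^r$ on $\pi_3^s\cong\Z/24$, vanishing of the boundary on $\pi_2^s$, splitting via $\tilde\eta_r$) are the right ones. One small correction: your connecting map should land in $\pi_{n+2}(S^n)$ via multiplication by $p^r$ (from the Puppe continuation $P^n\to S^n\xra{p^r}S^n$), not in $\pi_{n+1}(S^{n-1})$; the conclusion is unaffected since you correctly identify the map as zero.

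For parts (1) and (2) the paper does \emph{not} redo the computation: it simply cites \cite[Section 3.3]{ZP23} for (1) and says (2) follows ``by the same method''. Your attempt to run the cofibre-sequence argument directly here has a genuine gap: $\pi_7(P^5(p^r))$ is \emph{not} in the stable range ($P^5$ is $3$-connected, and Freudenthal gives stability only through $\pi_6$), so the cofibration (\ref{Cof Moore}) does not automatically yield a long exact sequence of homotopy groups in degree $7$. You would need to invoke Blakers--Massey to justify exactness at $\pi_7$, and even then $\pi_7$ sits at the edge of the excision range, so surjectivity/injectivity at that spot requires care. The extension problems you flag (why $i_4^r\nu_4$ has order exactly $2^{r+1}$, how $\tilde\eta_r\eta$ and $i_4^r\Sigma\nu'$ interact) are precisely the content of the computation in \cite{ZP23}, which uses more refined unstable methods. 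So for (1)--(2) your outline is a reasonable heuristic but not yet a proof; citing the reference as the paper does is the honest route unless you are prepared to redo that unstable analysis in full.
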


\begin{proof}
	(i) is  computed  in \cite[Section 3.3]{ZP23} and  by the same method, it is easy to compute $\pi_7(P^{5}(p^r))$ in (ii) for the odd prime number $p$.  Note that  $\pi_{n+2}(P^{n}(p^r))$ ($n\geq 8$) is in stable range for any prime number $p$. Thus (iii),(iv) are easy to get by the exact sequence of stable homotopy groups induced by the cofibration sequence (\ref{Cof Moore}).
\end{proof}

\begin{lemma}\label{lemma pi(Changcplex)}
	Let $r,s\geq 1$ be integers.
	\begin{enumerate}
		\item $\pi_7(C_{\eta}^6)\cong\Z\lara{i^{\eta}_4\nu_4}\oplus\Z/6\lara{i^{\eta}_4\Sigma \nu'}$.
		\item $\pi_7(C^6_r)\cong\Z/2^{r+1}\lara{\bar{i}_4^r\nu_4}\oplus (1-\delta_r)\Z/2\lara{\bar{i}_4^r\Sigma\nu'}\oplus \Z/2\lara{\bar{i}_P^r\tilde{\eta}_r\eta}.$
		\item $\pi_7(C^{6,s})\cong\Z/2\lara{\hat{i}_5\eta^2}\oplus\Z\lara{\hat{i}^s_4\nu_4}\oplus \Z/6\lara{\hat{i}^s_4\Sigma\nu'}$.
		\item $\pi_7(C_r^{6,s})\cong\Z/2\lara{\check{i}_5^{s,r}\eta^2}\oplus\Z/2^{r+1}\lara{\check{i}_4^{s,r}\nu_4}\oplus (1-\delta_r)\Z/2\lara{\check{i}_4^{s,r}\Sigma\nu'}\oplus \Z/2\lara{\check{i}_{P}^r\tilde{\eta}_r\eta}.$
	\end{enumerate}

	For $n\geq 5$, we have the following homotopy groups
	\begin{enumerate}
		\setcounter{enumi}{4}
		\item $ \pi_{n+3}(C_{\eta}^{n+2})\cong\Z/12\lara{i^{\eta}_n\nu}\cong\Z/4\lara{3i_{n}^{\eta}\nu}\oplus \Z/3\lara{i_{n}^{\eta}\alpha_{1}(n)}$.
		\item $\pi_{n+3}(C^{n+2}_r)\cong\Z/2^{m_r^2}\lara{\bar{i}_n^r\nu}\oplus \Z/2\lara{\bar{i}_P^r\tilde{\eta}_r\eta}$.
		\item $\pi_{n+3}(C^{{n+2},s})\cong\Z/2\lara{\hat{i}^s_{n+1}\eta^2}\oplus\Z/12\lara{\hat{i}^s_n\nu}$. 
		\item $\pi_{n+3}(C_r^{{n+2},s})\cong\Z/2\lara{\check{i}_{n+1}^{s,r}\eta^2}\oplus\Z/2^{m_r^2}\lara{\check{i}_n^{s,r}\nu}\oplus \Z/2\lara{\check{i}_{P}^r\tilde{\eta}_r\eta}.$
	\end{enumerate} 	
\begin{proof}	
The $2$-local homotopy groups of \emph{elementary Chang-complexes} are obtained in \cite{JinZhu}; the computations of the $3$-local homotopy groups of these spaces are trivial. We combine these two parts to get the above total homotopy groups.
\end{proof}
\end{lemma}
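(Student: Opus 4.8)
The plan is to compute each homotopy group $\pi_7$ (in the unstable range $n=4$) and $\pi_{n+3}$ (in the stable range $n \geq 5$) by splicing together the $2$-primary and $3$-primary parts, since $\pi_7(X)$ for these finite complexes has no $p$-torsion for $p \geq 5$ (all cells are in dimensions $\leq n+2$ with $n=4$, so the relevant homotopy of spheres and Moore spaces contributes only $2$- and $3$-torsion). For the $2$-primary part I would invoke the results of \cite{JinZhu}, which give the $2$-local homotopy groups of the elementary Chang-complexes directly; the generators $\bar i_4^r\nu_4$, $\bar i_P^r\tilde\eta_r\eta$, $\hat i_5\eta^2$, $\check i_5^{s,r}\eta^2$, etc., are exactly the images under the canonical inclusions (from the \textbf{Cof. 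List}) of the corresponding generators of $\pi_7$ of the sphere and Moore-space subcomplexes recorded in Lemma \ref{lem:Moore1} and Lemma \ref{lem:pi7(P(pr))}. For the $3$-primary part, I would argue that $3$-locally each elementary Chang-complex $C$ splits: since $\eta$ and $\eta^2$ are $2$-torsion, the attaching maps $i^r_n\eta$, $\eta q^s_{n+1}$, $i^r_n\eta q^s_{n+1}$ all vanish after $3$-localization, so $3$-locally $C^{n+2}_\eta \simeq S^n \vee S^{n+2}$, $C^{n+2}_r \simeq P^{n+1}(2^r)\vee S^{n+2} \simeq S^{n+2}$ (as $P^{n+1}(2^r)$ is $3$-locally contractible), and similarly $C^{n+2,s}$ and $C^{n+2,s}_r$ are $3$-locally wedges of spheres (with the $2$-power Moore summands killed). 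Then $\pi_7$ or $\pi_{n+3}$ of the $3$-localization is read off from $\pi_{m+3}(S^m;3)\cong\Z/3\langle\alpha_1(m)\rangle$ (for $m\geq 3$) together with $\pi_7(S^4;3)$, which has only the $\Z/3\langle\Sigma\nu'\rangle$ contribution from $\pi_7(S^4)\cong\Z\oplus\Z/12$.

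Concretely, for part (1) I use the cofibre sequence $S^n\xra{i^\eta_n}C^{n+2}_\eta\xra{q^\eta_{n+2}}S^{n+2}$ with $n=4$; since $\eta_4$ is $2$-torsion the $3$-local sequence splits, and combining the integral $\pi_7(S^4)=\Z\oplus\Z/12$ with the $2$-local computation of \cite{JinZhu} for $\pi_7(C^6_\eta)$ gives $\Z\oplus\Z/6$, generated by $i^\eta_4\nu_4$ and $i^\eta_4\Sigma\nu'$ — note the $\Z/12$ drops to $\Z/6$ because the $2$-part $\Z/4\langle 3\nu_4\rangle$ already sits inside (or is absorbed by) the free summand's torsion after attaching the cell, a fact I would extract from the $2$-local statement. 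For parts (2)--(4) I run the same two-step procedure using \textbf{Cof2} or \textbf{Cof3} from the \textbf{Cof. List} to identify the $2$-local generators via the inclusions $\bar i_P^r$, $\hat i^s_{n+1}$, $\check i^{s,r}_{n+1}$, etc., and the $\delta_r$ factor records that $\Sigma\nu'$ contributes only when $r\geq 2$ (for $r=1$, $\Z/2^{r+1}=\Z/4$ already accounts for all $2$-torsion in that summand). For the stable cases (5)--(8) with $n\geq 5$, everything is in the stable range: I use the long exact sequence of stable homotopy groups induced by the relevant cofibrations (as in the proof of Lemma \ref{lem:pi7(P(pr))}(iii),(iv)), with $\pi^s_3\cong\Z/24$ and the known stable attaching-map data, again splicing the $2$- and $3$-primary parts; case (5) in particular gives $\Z/12\cong\Z/4\langle 3i^\eta_n\nu\rangle\oplus\Z/3\langle i^\eta_n\alpha_1(n)\rangle$ exactly as for $\pi_{m+3}(S^m)$ since $C^{n+2}_\eta=\Sigma^{n-2}\CP2$ has trivial $\Sq^2$-action obstruction to the relevant attaching map being detected.

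The main obstacle I anticipate is not the $3$-primary bookkeeping (which is essentially splitting plus reading off $\alpha_1$) but rather making precise, in each of the four families, the identification of the explicit generators coming from \cite{JinZhu} with the inclusion-composites named in the statement, and in particular tracking how the free $\Z$ (in the unstable $n=4$ cases (1)--(4)) interacts with the $2$-torsion so that $\Z/12$ reduces to $\Z/6$ in (1),(3) while $\Z/2^{r+1}$ survives in (2),(4) — i.e. verifying that no unexpected extension problem or hidden relation occurs when passing between the $2$-local and $3$-local pictures and reassembling. This amounts to checking that the orders multiply correctly (the group is the direct sum of its $p$-primary components) and that the chosen generators are compatible, which is routine but must be stated carefully; the cited computations in \cite{JinZhu} and the cofibre sequences above supply all the needed input.
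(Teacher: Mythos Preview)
Your proposal is correct and follows essentially the same approach as the paper: cite \cite{JinZhu} for the $2$-primary computations and observe that the $3$-local computation is trivial because the attaching maps (built from $\eta$) vanish $3$-locally, so the Chang-complexes split as wedges of spheres there. One minor imprecision: only cases (1) and (3) carry a free $\Z$ summand in the unstable range (cases (2) and (4) have the $\Z/2^{r+1}$ instead), but this does not affect the argument.
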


\begin{lemma}\label{Lem gen of Ceta}
	For $n\geq 3$, there are maps  
	\[\bar{\zeta}_{n}\colon C^{n+2}_\eta\to S^n,\quad \tilde{\zeta}_{n+2}\colon S^{n+2}\to C^{n+2}_\eta\] 
	satisfying  the formulae 
	\[\bar{\zeta}_n i^{\eta}_n=2\cdot 1_n,\quad q_{n+2}^{\eta}\tilde{\zeta}_{n+2}=2\cdot 1_{n+2},\] where
	 $1_k$ denotes the identity on $S^k$.
	 \begin{proof}
		See  \cite[(8.3)]{Toda2} or \cite[Theorem 3.1]{lipc22}.
	\end{proof}
\end{lemma}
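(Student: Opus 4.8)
The plan is to construct the maps $\bar\zeta_n$ and $\tilde\zeta_{n+2}$ by combining the degree-$2$ self-maps of $S^n$ and $S^{n+2}$ with the structure maps of the cofibre sequence $S^{n+1}\xra{\eta_n}S^n\xra{i^\eta_n}C^{n+2}_\eta\xra{q^\eta_{n+2}}S^{n+2}$, and to observe that the obstruction to extending (resp.\ lifting) lies in a homotopy group that is killed by multiplication by $2$ because $\eta$ has order $2$. Concretely, to get $\bar\zeta_n$ I would try to extend the map $2\cdot 1_n\colon S^n\to S^n$ over the inclusion $i^\eta_n\colon S^n\hookrightarrow C^{n+2}_\eta$; such an extension exists if and only if the composite $S^{n+1}\xra{\eta_n}S^n\xra{2}S^n$ is null-homotopic, and indeed $2\eta_n=0$ in $\pi_{n+1}(S^n)\cong\Z/2$. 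Dually, to get $\tilde\zeta_{n+2}$ I would lift $2\cdot 1_{n+2}\colon S^{n+2}\to S^{n+2}$ through the pinch map $q^\eta_{n+2}\colon C^{n+2}_\eta\to S^{n+2}$; the obstruction to such a lift is the composite $S^{n+2}\xra{2}S^{n+2}\xra{\partial}\Sigma S^{n}= S^{n+1}$, where $\partial$ is the connecting map, which is $\eta_{n+1}$ (or its suspension) up to sign, and again $2\eta=0$. So both constructions go through, and by construction the composites $\bar\zeta_n i^\eta_n$ and $q^\eta_{n+2}\tilde\zeta_{n+2}$ are the chosen degree-$2$ maps, which is exactly the pair of formulae asserted.

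For the cofibre-sequence bookkeeping I would use the standard Puppe sequence $\cdots\to \pi_{n}(S^n)\xra{(i^\eta_n)_\sharp}\pi_n(C^{n+2}_\eta)\to\cdots$ and its Spanier--Whitehead dual, and for the identification of the connecting homomorphism with $\eta$ I would appeal to the fact that $C^{n+2}_\eta\simeq S^n\cup_{\eta_n}e^{n+2}$ by definition, so that the attaching map is literally $\eta_n$; the delooped/suspended statement that the top-cell coattaching map is again $\eta$ is the standard EHP-type fact. Since we are in the stable range ($n\geq 3$, and the cells are in dimensions $n$ and $n+2$ with a gap of $2<n$), there is no subtlety from unstable phenomena, and the relevant groups $\pi_{n+1}(S^n)$ and $\pi_{n+2}(S^{n+1})$ are both $\Z/2$ generated by $\eta$. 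The one point that requires a little care is that ``$2\cdot 1$'' on the nose extends/lifts, not merely some map inducing multiplication by $2$ on homology; this is handled because the obstruction group is exactly where the relation $2\eta=0$ lives, so any choice works and in particular we may arrange the composite to be precisely $2\cdot 1$.

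I do not expect a genuine obstacle here — the statement is a classical fact about $\Sigma^{n-2}\CP2$ and is already in the literature (Toda~\cite[(8.3)]{Toda2}, and \cite[Theorem~3.1]{lipc22}), so the cleanest route is simply to cite those references, which is what the author does. If one wanted a self-contained argument, the only mildly delicate step is pinning down the sign/precise identification of the connecting map with $\eta$ so that the lift $\tilde\zeta_{n+2}$ realizes $2\cdot 1_{n+2}$ and not, say, $-2\cdot 1_{n+2}$ or $2\cdot 1_{n+2}$ composed with a unit; but since we only need the formulae up to the stated form and $-1$ is realized by a self-homeomorphism of $S^{n+2}$, even this ambiguity is immaterial.
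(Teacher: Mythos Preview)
Your proposal is correct. The paper's own proof is purely a citation to Toda and \cite{lipc22}, so you have in fact supplied more than the paper does: the standard self-contained obstruction-theoretic argument. The one technical point worth making explicit is that the lifting step for $\tilde\zeta_{n+2}$ uses exactness of $\pi_{n+2}$ on the cofibre sequence, which is not formal for cofibre sequences in general; it holds here via the long exact sequence of the pair $(C^{n+2}_\eta,S^n)$ together with Blakers--Massey identifying $\pi_{n+2}(C^{n+2}_\eta,S^n)\cong\pi_{n+2}(S^{n+2})$ in the range $n+2\le 2n$, with boundary map the attaching map $\eta_n$. Your stable-range remark already gestures at this, so there is no genuine gap.
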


\begin{lemma}\label{Lem:suspension inu4}
Let $r,s$ be positive integers.
	\begin{enumerate}
		\itemsep=1ex
		\item  If for some $a\in \Z, b\in \Z/p^r$ with $p$ an odd prime, the maps $ai_4^{\eta}\nu_4\in \pi_7(C_{\eta}^6)$,  $a\hat{i}_4\nu_4\in \pi_7(C^{6,s})$ and $bi_4^r\nu_4\in \pi_7(P^5(p^r))$ are suspensions, then $a,b=0$.
 	\item  If for some  $a_r\in \Z/2^{r+1}$, the maps $a_r i_4^r\nu_4\in \pi_7(P^5(2^r))$,  $a_r\bar{i}_4^r\nu_4\in \pi_7(C_r^6)$ and $a_r\check{i}_4^{s,r}\nu_4\in \pi_7(C_r^{6,s})$ are suspensions, then $2^r| a_r$.
 	\end{enumerate}
\end{lemma}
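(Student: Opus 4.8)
The plan is to use the suspension/EHP-type exact sequences for the relevant $\an{2}$-complexes together with the known computation of $\pi_7$ (Lemma \ref{lem:pi7(P(pr))} and Lemma \ref{lemma pi(Changcplex)}) and $\pi_8$ of their suspensions. Write $X$ for any of $C_\eta^6$, $C^{6,s}$, $P^5(p^r)$ (odd $p$) in part (1), and similarly $P^5(2^r)$, $C_r^6$, $C_r^{6,s}$ in part (2). In each case $\Sigma X$ is an $\an{2}$-complex with $n=6$, and there is an EHP-style sequence (or, more elementarily, the James/Gray suspension sequence) $\pi_8(\Sigma X)\xra{P}\pi_6(X)\xra{E}\pi_7(\Sigma X)$, so an element $\alpha\in\pi_7(X)$ (after one more suspension to $\pi_7(\Sigma X)$ — but in fact we want $\alpha$ itself to desuspend, so the right sequence is) $\pi_7(X)\xra{E}\pi_8(\Sigma X)$ with kernel the image of the Whitehead-product map; equivalently $\alpha\in\pi_7(X)$ is a suspension iff it lifts against $E\colon \pi_7(\Sigma^{-1}?)$ — more precisely I would use the exact sequence of the pair or Gray's theorem that for an $(n-1)$-connected complex the first obstruction to desuspension lives in a metastable range. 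The cleanest route: $X$ is $2$-connected of dimension $6$, so by Freudenthal $E\colon \pi_7(X)\to\pi_8(\Sigma X)$ is onto with kernel generated by Whitehead products $[\iota,\cdot]$; a class $\alpha\in\pi_7(X)$ is a suspension precisely when it is in the image of $E'\colon \pi_6(\text{(one-lower complex)})$, but since $X$ is not itself a suspension of a smaller complex in an obvious way, I will instead argue via $\Sigma X$ and the cofibrations in \textbf{Cof. List}.

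First I would treat $P^5(2^r)$. The class $i_4^r\nu_4$ maps under the pinch $q_4^r\colon P^5(2^r)\to S^4$ to $2^r\nu_4$ (since $q_4^r i_4^r$ has degree $2^r$ on the bottom cell — more precisely $q_4^r i_4^r\nu_4 = (q_4^r i_4^r)_*\nu_4$, and $q_4^r i_4^r \simeq \ast$, so this is the wrong projection). The correct tool is the cofibration $S^4\xra{2^r}S^4\xra{i_4^r}P^5(2^r)\xra{q_5^r}S^5$: if $a_r i_4^r\nu_4 = \Sigma\beta$ for some $\beta\in\pi_6(?)$, I compare with the analogous statement on the sphere level. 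Actually the key identity is that $\Sigma(i_4^r\nu_4)\in\pi_8(P^6(2^r))$, and we must test whether $a_r i_4^r\nu_4$ desuspends; the obstruction is the relative Whitehead product, and one computes $H(a_r i_4^r\nu_4)$ via the Hopf invariant / James-Hopf map landing in $\pi_7(S^7)=\Z$ detecting the $S^4$-cell, giving $H(a_r i_4^r\nu_4) = a_r\cdot H(\nu_4)\cdot(\text{unit})$. Since $\nu_4$ has Hopf invariant one, $H$ is nonzero exactly when $a_r\not\equiv 0$; but we are working mod the image of suspension from one dimension lower, and the precise statement $2^r\mid a_r$ emerges because the bottom-cell attaching map has degree $2^r$, so suspensions from $\pi_6$ of the $5$-skeleton $S^4$ contribute exactly the multiples $2^r(\text{generator})$ of the $\nu_4$-summand. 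Concretely: $E\colon\pi_7(S^4)\to\pi_8(S^5)$ kills nothing relevant, but the composite $\pi_7(S^4)\xra{(i_4^r)_*}\pi_7(P^5(2^r))$ has the $\Z$-generator $\nu_4$ going to an element of order $2^{r+1}$, and the suspension $\pi_7(P^5(2^r))\to\pi_8(P^6(2^r))$ has the property that a suspended class must come from the analogous group one skeleton down, pinning the coefficient of $i_4^r\nu_4$ to a multiple of $2^r$. For $C_r^6$ and $C_r^{6,s}$ the same argument runs through \textbf{Cof2}/\textbf{Cof4}, reducing to the $P^5(2^r)$ case since $\bar i_4^r\nu_4$ and $\check i_4^{s,r}\nu_4$ are the images of $i_4^r\nu_4$ under the inclusions appearing there, which commute with suspension; a suspension in $C_r^6$ maps to a suspension in the quotient $S^5$ or in $P^5(2^r)$-type pieces, forcing the same divisibility.

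For part (1), the argument is easier because there is no $2$-torsion competing with $\nu_4$: in $\pi_7(C_\eta^6)$ the class $i_4^\eta\nu_4$ generates a free $\Z$-summand, likewise $\hat i_4\nu_4$ in $\pi_7(C^{6,s})$, and $i_4^r\nu_4$ generates $\Z/p^r$ in $\pi_7(P^5(p^r))$ with $p$ odd. If $a i_4^\eta\nu_4$ were a suspension $\Sigma\beta\in\pi_8(\Sigma C_\eta^6)$, applying the pinch to the top cell or the map $\bar\zeta$ from Lemma \ref{Lem gen of Ceta} (which satisfies $\bar\zeta_n i_n^\eta = 2\cdot 1_n$) and comparing Hopf invariants forces $a=0$, since the Hopf invariant of a suspension vanishes while $H(a i_4^\eta\nu_4)=a$ up to a unit (using that $C_\eta^6$ has its bottom cell in dimension $4$ and $\nu_4$ has Hopf invariant one). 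For the odd-primary Moore space, $\Sigma$ from $\pi_6$ of the $4$-skeleton $S^4$ hits $a i_4^r\nu_4$ only when $p^r\mid a$, i.e. $b=0$ in $\Z/p^r$, by exactly the degree-$p^r$ bottom-cell argument.

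The main obstacle I anticipate is making the Hopf-invariant / desuspension bookkeeping precise when the bottom cell is attached by a degree-$2^r$ (or degree-$p^r$) map rather than trivially: one must identify the image of the suspension homomorphism from the relevant homotopy group of the bottom skeleton inside $\pi_7(X)$, and show it is exactly the subgroup $\{a\,\xi\mid 2^r\mid a\}$ (resp. $p^r\mid a$) of the $\nu_4$-summand, with no extra contributions from the top cell or from Whitehead products. This is where I would invoke the explicit $\pi_7$-computations of Lemmas \ref{lem:pi7(P(pr))}--\ref{lemma pi(Changcplex)} together with naturality of the James-Hopf invariant under the cofibrations in \textbf{Cof. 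List}, rather than attempting a direct geometric desuspension.
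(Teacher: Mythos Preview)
Your proposal circles the right tool—the James–Hopf invariant and the EHP criterion—but never lands on the computation that actually produces the divisibility conditions, and in one place it points at the wrong target group. You say the James–Hopf map ``lands in $\pi_7(S^7)=\Z$ detecting the $S^4$-cell''; for the sphere that is fine, but for $X=\Sigma Y$ the second James–Hopf invariant is
\[
  H\colon \pi_n(\Sigma Y)\longrightarrow \pi_n(\Sigma Y\wedge Y),
\]
and for the complexes at hand (with $Y=\Sigma^{-1}X$ an $\mathbf{A}_3^2$-complex) the target is $\pi_7(X\wedge \Sigma^{-1}X)$, not $\pi_7(S^7)$. This matters: $X\wedge \Sigma^{-1}X$ is $6$-connected, so by Hurewicz $\pi_7(X\wedge\Sigma^{-1}X)\cong H_7(X\wedge\Sigma^{-1}X)$, and K\"unneth gives this group as $\Z$ for $X=C_\eta^6,\ C^{6,s}$, as $\Z/p^r$ for $X=P^5(p^r)$ with $p$ odd, and as $\Z/2^r$ for $X=P^5(2^r),\ C_r^6,\ C_r^{6,s}$. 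Naturality (Toda, Proposition~2.2) then yields $H(i_4\nu_4)=(i_4\wedge i_3)H(\nu_4)=i_4\wedge i_3$, a generator, so $H(a\cdot i_4\nu_4)=0$ forces exactly $a=0$ in the first three cases and $2^r\mid a$ in the last three. The EHP criterion (in the metastable range $7\le 3\cdot 3-1$) says this is precisely the condition for $a\cdot i_4\nu_4$ to be a suspension. This is the paper's argument, and it is short once the correct target is identified.

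Your alternative lines—pushing the question through the cofibrations in the \textbf{Cof.\ List}, or trying to describe the image of $E$ as ``multiples of $2^r$ coming from the bottom skeleton''—do not straightforwardly work. Mapping to a quotient (e.g.\ $C_r^6\to S^5$) loses the $\nu_4$ information rather than isolating it, and the claim that ``a suspended class must come from the analogous group one skeleton down'' is not a theorem: the image of suspension in $\pi_7(X)$ is governed by the kernel of $H$, not by any skeletal filtration. Your final paragraph correctly diagnoses that the missing piece is exactly this bookkeeping; the resolution is to compute $H$ in the smash product rather than to chase cofibrations.
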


\begin{proof}
 Let $H:\pi_n(\Sigma X)\to  \pi_n(\Sigma X\wedge X)$ be the second James-Hopf invariant. 
By the EHP sequence, if $X$ is $(m-1)$-connected, then for $n\leq 3m-1$, 
$\alpha \in  \pi_n(\Sigma X)$ is  a suspension if and only if $H(\alpha)= 0$. 

By the	Proposition 2.2 of \cite{Todabook}, we get 
\begin{align}
	H(i_4\nu_4)=(i_4\wedge i_3)H(\nu_4)=i_4\wedge i_3\in \pi_7(X\wedge \Sigma^{-1}X) \label{equ:H(i4nu4)}
\end{align}
where $\Sigma^{-1}X$ is the desuspension of $X$,  $i_n=i_n^{\eta}, \hat{i}^s_n, i_n^r, \bar{i}_n^r, \check{i}_n^{s,r}$ are the canonical inclusions $S^n\to X$ for
 $X=C_{\eta}^{n+2}$,  $C^{n+2,s}$, $P^{n+1}(p^r)$, $C_r^{n+2,s}$,   $C_r^{n+2,s}$ respectively. 

By the Hurewicz isomorphism, we have 
\begin{align}
&\pi_7(P^5(p^r)\wedge P^4(p^r))\cong\Z/p^r\lara{i^r_4\wedge i^r_3} ~\text{($p$ is odd)};\nonumber\\ &\pi_7(X\wedge \Sigma^{-1}X)\cong\Z\lara{i_4\wedge i_3},~~ X=C_{\eta}^6,  C^{6,s};\nonumber\\
& \pi_7(X\wedge \Sigma^{-1}X)\cong\Z/2^{r}\lara{i_4\wedge i_3},~~ X=P^{6}(2^r), C_r^{6,s}, C_r^{6,s}. \label{equ:pi7(Xwedge X)}
\end{align}
From (\ref{equ:H(i4nu4)}) and (\ref{equ:pi7(Xwedge X)}), we complete the proof of this lemma now.
\end{proof}

\section{Some lemmas on cohomology operations}\label{sec:coho-operat}

\begin{lemma}\label{lem:Cohom oper +}
	Let $\Sigma X\xra{\alpha} Y\xra{i_{\alpha}} \bm C_{\alpha} \xra{q_{\alpha}} \Sigma^{2} X$  be cofibration sequences  for $\alpha=f, g$ respectively. Let $\mathcal{A}(-)\colon H^{*}( -; \Z/p)\to   H^{*+t}( -; \Z/p)  $ be a cohomology operation.  Assume that the following conditions hold for $\alpha=f,g$:
	\begin{enumerate}
		\item there are isomorphisms 
	\[ H^{k}(\bm C_{\alpha};\Z/2)\xrightarrow[\cong ]{i^{\ast}_{\alpha }}H^{k}( Y;\Z/2),\quad  H^{k+t}(\Sigma^2 X;\Z/2)\xrightarrow[\cong ]{q^{\ast}_{\alpha }}H^{k+t}( \bm C_{\alpha};\Z/2);\]
	\item $\mathcal{A}(\bm C_{g})\colon H^{k}( \bm C_{g}; \Z/p)\to   H^{k+t}( \bm C_{g}; \Z/p)  $ is trivial.
	\end{enumerate}
	Then  $\mathcal{A}(\bm C_{f+g})\colon H^{k}( \bm C_{f+g}; \Z/p)\to   H^{k+t}( \bm C_{f+g}; \Z/p)  $ is non-trivial if and only if  $\mathcal{A}(\bm C_{f})\colon H^{k}( \bm C_{f}; \Z/p)\to   H^{k+t}( \bm C_{f}; \Z/p)$ is non-trivial.
	\begin{proof}
		Consider the following homotopy commutative diagram of cofibration sequences
			\begin{align*}
			\xymatrix{
				\Sigma  X\ar[d]^{\matwo{id}{id}} \ar[r]^-{f+g} & Y \ar[r]\ar@{=}[d] & \bm C_{f+g}\ar[d]^-{\mu_{+}}\\
				\Sigma  X\vee 	\Sigma  X \ar[r]_-{(f,g)} & Y \ar[r] &    \bm C_{(f,g)} \\
				\Sigma  X\ar[u]_{M_{\alpha}} \ar[r]_-{\alpha} & Y\ar@{=}[u]  \ar[r] &    \bm C_{\alpha}\ar[u]_{\mu_{\alpha}} \\
			}.
		\end{align*}
		where $M_{\alpha}=\matwo{id}{0}, \matwo{0}{id}$ for $\alpha=f,g$ respectively.  
	There is an induced commutative diagram 
		\begin{align*}
		\xymatrix{
		 H^{k}(\bm C_{f+g};\Z/2)\ar[r]^-{\mathcal{A}(\bm C_{f+g})} &  H^{k+t}(\bm C_{f+g};\Z/2)& H^{k+t}(\Sigma^2 X;\Z/2)\ar[l]^{\cong }_-{q^{\ast}_{f+g}}\\
		 H^{k}(\bm C_{(f,g)};\Z/2)\ar[r]^-{\mathcal{A}(\bm C_{(f,g)})}\ar[u]^{\mu_{+}^{\ast}}_{\cong}\ar[d]_{u_{\alpha}^{\ast}}^-{\cong} & 	H^{k+t}(\bm C_{(f,g)};\Z/2)\ar[d]^-{u^{\ast}_{\alpha}}\ar[u]_{\mu_{+}^{\ast}} &H^{k+t}(\Sigma^2 X\!\vee\! \Sigma^2 X;\Z/2)\ar[l]^-{\cong }_-{q^{\ast}_{(f,g)}}\ar[u]^{\matwo{id}{id}^{\ast}}\ar[d]_{M^{\ast}_{\alpha}}\\
		H^{k}(\bm C_{\alpha};\Z/2)\ar[r]^-{\mathcal{A}(\bm C_{\alpha})} &   H^{k+t}(\bm C_{\alpha};\Z/2) &H^{k+t}(\Sigma^2 X;\Z/2)\ar[l]^-{\cong }_-{q^{\ast}_{\alpha}} \\
		}.
	\end{align*}
	where  $\mu_{+}^{\ast}$ and  $u^{\ast}_{\alpha}$  in the left column are isomorphisms since 	$H^{k}(\bm C_{\alpha};\Z/2)\xrightarrow {i_{\alpha }}H^{k}( Y;\Z/2)$ is isomorphic and $q^{\ast}_{f+g}, q^{\ast}_{(f,g)}, q^{\ast}_{\alpha}$ are isomorphisms by the isomorphism $H^{k+t}(\Sigma^2 X;\Z/2)\xrightarrow {q_{\alpha }}H^{k+t}( \bm C_{\alpha};\Z/2)$ .
	
	Note that $\matwo{id}{id}= M_f+M_g$. Then we have the following sequences of equivalences
	\begin{align*}
	\mathcal{A}(\bm C_{f+g})\neq 0& \iff  (q^{\ast}_{f+g})^{-1}\mathcal{A}(\bm C_{f+g})\neq 0  \iff  \matwo{id}{id}^{\ast} (q^{\ast}_{f+g})^{-1}\mathcal{A}(\bm C_{(f,g)})\neq 0 \\
	&\iff M_f^{\ast}(q^{\ast}_{f+g})^{-1}\mathcal{A}(\bm C_{(f,g)})+M_g^{\ast}(q^{\ast}_{f+g})^{-1}\mathcal{A}(\bm C_{(f,g)})\neq 0\\
	& \iff   (q^{\ast}_{f})^{-1}\mathcal{A}(\bm C_{f})+(q^{\ast}_{g})^{-1}\mathcal{A}(\bm C_{g})\neq 0\iff\mathcal{A}(\bm C_{f})\neq 0,
	\end{align*}
	where the last equivalence holds by the assumptions in the Lemma.
	\end{proof}
\end{lemma}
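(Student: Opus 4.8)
\textbf{Proof proposal for Lemma \ref{lem:Cohom oper +}.}

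The plan is to reduce the statement about $\bm C_{f+g}$ to a statement about $\bm C_{(f,g)}$, where the two maps $f$ and $g$ live on separate wedge summands of the domain and therefore the cohomology operation splits cleanly. First I would set up the standard homotopy-commutative diagram of cofibre sequences relating the three cofibres $\bm C_{f+g}$, $\bm C_{(f,g)}$ and $\bm C_\alpha$ ($\alpha=f,g$): the top square is induced by the fold map $\Delta=\matwo{id}{id}\colon \Sigma X\to \Sigma X\vee \Sigma X$ (so that $(f,g)\circ\Delta=f+g$), giving a map $\mu_+\colon \bm C_{f+g}\to \bm C_{(f,g)}$; the bottom square is induced by the two inclusions $M_f=\matwo{id}{0}$, $M_g=\matwo{0}{id}$, giving maps $\mu_\alpha\colon \bm C_\alpha\to \bm C_{(f,g)}$. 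Applying $H^*(-;\Z/p)$ and using naturality of the cohomology operation $\mathcal A$ with respect to all these maps produces a ladder of commuting squares; this is the key structural input and it is purely formal.

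Next I would identify which of the induced maps on cohomology are isomorphisms. In degree $k$ and $k+t$, assumption (i) says $i_\alpha^*$ is an isomorphism and $q_\alpha^*$ is an isomorphism for $\alpha=f,g$; the analogous statements for $\bm C_{f+g}$ and $\bm C_{(f,g)}$ follow because $H^k$ and $H^{k+t}$ of all four cofibres are computed from the same long exact sequences with $\alpha$ replaced by $f+g$, $(f,g)$, etc., and the hypotheses force the connecting maps to vanish in the relevant range (more precisely: in degree $k$ each cofibre has the cohomology of $Y$, and in degree $k+t$ each has the cohomology of $\Sigma^2 X$, via the respective $i^*$ and $q^*$). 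Consequently $\mu_+^*$ and $u_\alpha^*=\mu_\alpha^*$ are isomorphisms on $H^k$, and the maps $q^*$ intertwine everything. The only map that is \emph{not} an isomorphism is $M_\alpha^*$ on $H^{k+t}(\Sigma^2X\vee\Sigma^2X)\to H^{k+t}(\Sigma^2X)$, which is simply the projection onto the $\alpha$-th summand.

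With the diagram in hand, the argument is a chase: write $\Delta = M_f + M_g$ (valid since on cohomology of a wedge the fold is the sum of the two coordinate projections, dualized), so $\Delta^* = M_f^* + M_g^*$. Then
\[
\mathcal A(\bm C_{f+g})\neq 0 \iff \Delta^*\,(q^*_{f+g})^{-1}\mathcal A(\bm C_{(f,g)})\neq 0 \iff M_f^*(\cdots)+M_g^*(\cdots)\neq 0,
\]
and the two terms are, by the bottom commuting squares, the pullbacks of $(q_f^*)^{-1}\mathcal A(\bm C_f)$ and $(q_g^*)^{-1}\mathcal A(\bm C_g)$ respectively; since $M_f^*$ and $M_g^*$ land in complementary summands the sum is nonzero iff at least one summand is. Finally assumption (ii) kills the $g$-term, leaving $\mathcal A(\bm C_{f+g})\neq 0 \iff \mathcal A(\bm C_f)\neq 0$.

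The main obstacle is not any single computation but making the diagram chase rigorous in the presence of the ``only in a range of degrees'' hypotheses: one must be careful that in the degrees $k$ and $k+t$ under consideration the cohomology of each of $\bm C_{f+g}$, $\bm C_{(f,g)}$, $\bm C_f$, $\bm C_g$ genuinely splits as $H^k(Y)\oplus 0$ and $0\oplus H^{k+t}(\Sigma^2X)$ (up to the indicated iso's), so that $\mu_+^*$, $\mu_\alpha^*$ really are isomorphisms there and the inverse maps $(q^*)^{-1}$ make sense; this requires invoking the long exact cohomology sequence of each cofibration and checking the boundary maps vanish, which is where assumption (i) is used in full. Once that bookkeeping is done the rest is automatic from naturality of $\mathcal A$.
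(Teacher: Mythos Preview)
Your proposal is correct and follows essentially the same route as the paper: the same ladder of cofibre sequences built from the fold map $\Delta=\matwo{id}{id}$ and the two inclusions $M_f,M_g$, the same identification of which induced maps are isomorphisms in degrees $k$ and $k+t$ via hypothesis (i), and the same additivity $\Delta^*=M_f^*+M_g^*$ followed by assumption (ii) to kill the $g$-term.

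One small inaccuracy: the clause ``since $M_f^*$ and $M_g^*$ land in complementary summands the sum is nonzero iff at least one summand is'' is not right. Both $M_f^*$ and $M_g^*$ take values in the \emph{same} copy of $H^{k+t}(\Sigma^2X)$ (they are the two coordinate projections from $H^{k+t}(\Sigma^2X\vee\Sigma^2X)$), so in general $M_f^*(z)+M_g^*(z)$ could vanish even if each term is nonzero. The argument is rescued only by your next sentence: assumption (ii) forces the $g$-term to be zero, whence the sum is nonzero iff the $f$-term is. This is exactly how the paper concludes as well, so the slip is harmless once you drop the ``complementary summands'' justification and rely directly on (ii).
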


Consider the following complexes:
\begin{align*}	
	 A^{s}(k,k')=P^{7}(2^s)\cup_{ki_{6}^s\eta^2+k'\tilde{\eta}_s}e^{9};
	\quad &A(k)=S^6\cup_{k\eta^2}e^{9}; \quad C^{\eta}(t)=C_{\eta}^7\cup_{ti_{5}^{\eta}\nu}e^9;\\
	 A_{r}(k,t)=P^{6}(2^r)\cup_{k\tilde{\eta}_s\eta+ti_{n}^r\nu}e^{9};\quad &\bar{C}(k,t)=C_{r}^{7}\cup_{k\bar{i}_{P}^r\tilde{\eta}_s\eta+t\bar{i}_{5}^r\nu}e^{9}; \\
	\hat{C}(k,t)=C^{7,s}\cup_{k\hat{i}^s_6\eta^2+t\hat{i}^s_5\nu}e^{9}; \quad & \check{C}(k,k',t)=C_r^{7,s}\cup_{k\check{i}_6^{r,s}\eta^2+k'\check{i}_{P}^r\tilde{\eta}_s\eta+t\check{i}_5^{r,s}\nu}e^{9}.
\end{align*}
where  $k,k'\in \Z/2$, $t$ is an element in  $\Z/12$ for $\hat{C}(k,t)$, $\Z/2^{m_{r}^3}$ for $A_{r}(k,t)$, $\Z/2^{m_{r}^2}$ for $\bar{C}(k,t)$ and $\check{C}(k,k')$.

 \begin{lemma}\label{lem:Sq2}
  	The Steenrod square  
  \[\Sq^2\colon H^{7}( A^{s}(k,k');\Z/2)\to H^{9}( A^{s}(k,k');\Z/2)\] 
   is non-trivial if and only if $k'=1$.
 \end{lemma}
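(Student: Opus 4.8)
The plan is to compute $\Sq^2$ on $H^7(A^s(k,k');\Z/2)$ by pulling the computation back to the mod $2$ cohomology of the building blocks $P^7(2^s)$ and $S^7$ via the cofibration defining $A^s(k,k')$, and to relate the nontriviality of $\Sq^2$ on the mapping cone to the presence of the summand $k'\tilde{\eta}_s$ in the attaching map. First I would fix generators: write $H^7(A^s(k,k');\Z/2)=\Z/2\langle x\rangle$ and $H^9(A^s(k,k');\Z/2)=\Z/2\langle z\rangle$, where $x$ restricts to the mod $2$ reduction of the generator $i^s_{6}$-class of $P^7(2^s)$ and $z$ comes from the top cell $e^9$; note also the class $y\in H^8$ coming from the $S^8$ cell of $P^7(2^s)$, with $\Sq^1 y = x$ (this is the defining Bockstein of $P^7(2^s)$, valid since we are working with $\Z/2$ coefficients and $s\ge 1$). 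The key point is that the $\Sq^2$ acting from $H^7$ to $H^9$ in a two-cell-type mapping cone $Y\cup_\phi e^9$ is detected precisely by the component of $\phi$ in the relevant homotopy group of the bottom skeleton that carries a nonzero $\Sq^2$, i.e. by the $\eta^2$-type summand of $\phi$.

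The main step is therefore to identify which homotopy classes in $\pi_8(P^7(2^s))$ support a nontrivial $\Sq^2$ on the resulting cone. By Lemma~\ref{lem:Moore1}(3) we have, for $s\ge 2$, $\pi_8(P^7(2^s))\cong\Z/2\langle i^s_6\eta^2\rangle\oplus\Z/2\langle\tilde\eta_s\rangle$ (and $\Z/4\langle\tilde\eta_1\rangle$ for $s=1$, with $2\tilde\eta_1=i^1_6\eta^2$). I would argue that the mapping cone of $i^s_6\eta^2\colon S^8\to P^7(2^s)$ has trivial $\Sq^2$ from $H^7$ to $H^9$: indeed $i^s_6\eta^2$ factors through the bottom cell $S^6\hookrightarrow P^7(2^s)$ as $i^s_6\circ\eta^2$, so in the cone the $S^9$-cell is attached by $\eta^2$ to that $S^6$, which contributes a nonzero $\Sq^2\colon H^6\to H^8$ of the cone, not $H^7\to H^9$; hence on $H^7$ the operation $\Sq^2$ vanishes. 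By contrast, for the cone of $\tilde\eta_s$ I would use the defining relation $q^s_7\tilde\eta_s=\eta$ (formula~(\ref{eq:eta})) to see that pinching the bottom Moore space's lower cell maps the cone of $\tilde\eta_s$ onto $C^{9}_\eta=S^7\cup_\eta e^9$, and $\Sq^2$ is nonzero on $S^7\cup_\eta e^9$; naturality of $\Sq^2$ then forces $\Sq^2 x = z$ in the cone of $\tilde\eta_s$, hence (by additivity of the attaching map data and the fact that $\Sq^2\colon H^7\to H^9$ of a wedge-type cone is additive in the attaching map components, cf. the mechanism in Lemma~\ref{lem:Cohom oper +}) $\Sq^2 x=z$ in $A^s(k,k')$ exactly when the $\tilde\eta_s$-coefficient $k'$ is nonzero.

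Concretely I would invoke Lemma~\ref{lem:Cohom oper +} with $\mathcal{A}=\Sq^2$, $Y=P^7(2^s)$, $f=k'\tilde\eta_s$ and $g=ki^s_6\eta^2$ (the two summands of the attaching map), after checking the two hypotheses: the cohomology of both cones $\bm C_f$ and $\bm C_g$ has the required $H^7\cong H^7(P^7(2^s))$ and $H^9\cong H^9(S^9)$ cell structure (both attaching maps lie in $\pi_8$ and so do not touch the $8$-cell cohomology), and $\Sq^2(\bm C_g)=0$ on $H^7$ by the factorization argument above. The lemma then reduces the claim to: $\Sq^2\colon H^7(\bm C_{k'\tilde\eta_s})\to H^9(\bm C_{k'\tilde\eta_s})$ is nontrivial iff $k'=1$, which is the $C^9_\eta$ computation just described (for $k'=0$ the cone is $P^7(2^s)\vee S^9$ and $\Sq^2$ is evidently zero on the degree-$7$ class). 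The main obstacle I anticipate is the bookkeeping for $s=1$, where $\pi_8(P^7(2))\cong\Z/4$ and $i^1_6\eta^2=2\tilde\eta_1$, so the summands are not independent; there one must be a little careful that "$k'=1$" still correctly names the cosets on which $\Sq^2$ is nonzero, but the relation $2\tilde\eta_1=i^1_6\eta^2$ together with the vanishing of $\Sq^2$ on the $i^1_6\eta^2$-cone shows $\Sq^2$ depends only on the parity of the $\tilde\eta_1$-coefficient, which is exactly the content of $k'\in\Z/2$ in the statement.
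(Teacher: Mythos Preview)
Your approach is the paper's: split the attaching map as $k'\tilde\eta_s + ki^s_6\eta^2$, verify that $\Sq^2$ vanishes on $H^7$ of the cone of the second summand and is nontrivial on the cone of the first iff $k'=1$, and then apply Lemma~\ref{lem:Cohom oper +}; the paper simply cites \cite{lipc23} for the $\tilde\eta_s$ case, whereas your pinch-to-$C^9_\eta$ argument via $q^s_7\tilde\eta_s=\eta$ is a direct verification of the same fact. One slip to fix: $A^s(k,0)$ has cells only in dimensions $6,7,9$, so there is no $H^8$ and $\eta^2$ contributes no $\Sq^2$ there (and your aside about $y\in H^8$ with $\Sq^1y=x$ is likewise off, though unused) --- the clean reason $\Sq^2\colon H^7\to H^9$ vanishes on $A^s(k,0)$ is that $q^s_7\circ i^s_6\eta^2=0$ induces a map $A^s(k,0)\to S^7\vee S^9$ which is an isomorphism on $H^7$ and $H^9$, where $\Sq^2$ is evidently zero.
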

 \begin{proof}
	From Section 2 of \cite{lipc23}, we get  $\Sq^2\colon H^{7}( A^{s}(0,k');\Z/2)\to H^{9}( A^{s}(0,k');\Z/2)$ is isomorphic if and only if $k'=1$. Clearly, $\Sq^2$ is trivial on the cohomologies of $A^{s}(k,0)$. So we get this lemma by Lemma \ref{lem:Cohom oper +}.
\end{proof}

\begin{lemma}\label{lem:Theta}
~~

\begin{enumerate}[(1)]
	\item \label{case Theta on X}
$\Theta\colon H^6(X;\Z/2)\to H^{9}(X;\Z/2)$ for $X=A^{s}(k,k'),A(k),$ $ A_{r}(k,t),\bar{C}(k,t),\hat{C}(k,t)$ is non-trivial if and only if $k=1$;
\item  \label{case Theta on C(k,k',t)}
$\Theta\colon H^6(\check{C}(k,k',t);\Z/2)\to H^{9}(\check{C}(k,k',t);\Z/2)$ for is non-trivial if and only if $k=1$ or $k'=1$.
\end{enumerate}
\begin{proof}
 It is clear that 
$ S_6(X)=H^6(X;\z{}),\quad T_6(X)=H^{9}(X;\z{}) $ for all $X$ given in   (\ref{case Theta on X}) and $X=\check{C}(k,k',t)$ in (\ref{case Theta on C(k,k',t)}).
We only prove the (\ref{case Theta on C(k,k',t)}) of this lemma since the proof of (\ref{case Theta on X}) is similar and easier.
	
There are homotopy commutative diagrams of cofibration sequences 
		\begin{align*}
		\xymatrix{
			S^6\ar@{=}[d] \ar[r]^-{\smatwo{2^s}{\eta}} & S^{6}\vee S^5 \ar[r]^-{\hat{i}^{s}}\ar[d]^-{p_1^{6}} & C^{7,s}\ar[d]^-{\hat{q}_{P}} \\
			S^6\ar[r]^-{2^{s}} & S^6  \ar[r]^-{i_{6}^{s}} & P^7(2^{s});
		} \qquad	\xymatrix{
		S^8\ar@{=}[d] \ar[r]^-{k\hat{i}^{s}_6\eta^2} &  C^{7,s} \ar[r]\ar[d]^-{\hat{q}_{P}} & \hat{C}(k,0)\ar[d]^-{ q_A} \\
		S^8\ar[r]^-{ki_6^s\eta^2} &P^7(2^{s}) \ar[r]&A^s(k,0).
		}
	\end{align*}
	 By the naturality of $\Theta$, we get the  following  commutative diagram of mod $2$ cohomology groups (the coefficients $\z{}$ are omitted)
    \[\begin{tikzcd}[sep=scriptsize]
   	H^6(A^s(k,0))\ar[r," q_A^\ast","\cong"swap]\ar[d,"\Theta",swap]& H^6(\hat{C}(k,0))\ar[d,"\Theta"]\\
   	H^{9}(A^s(k,0))\ar[r," q_A^\ast","\cong"swap]& H^{9}(\hat{C}(k,0))
   \end{tikzcd},\]
 where the isomorphisms $ q_A^\ast$ can be easily checked. 
 
	From \cite[Lemma 2.7]{lipc23}, we get that  the left $\Theta$ above is non-trivial if and only if $k=1$. So
	\begin{align}
		\Theta:H^6(\hat{C}(k,0))\to   H^{9}(\hat{C}(k,0))~\text{is isomorphic~}\iff k=1. \label{iff Theta hatC(k,0)}
	\end{align}
	By the diagram (\ref{Diagram 2 for <<}), $\check i_{C}^s \hat i^s_6=\check{i}_6^{s,r}$. So there are the following homotopy commutative diagrams of cofibration sequences
		\begin{align}
	\xymatrix{
		S^8\ar@{=}[d] \ar[r]^-{k\hat{i}^{s}_6\eta^2} &  C^{7,s} \ar[r]\ar[d]^-{\check{i}_{C}^s} & \hat{C}(k,0)\ar[d]^-{\check{i}_{C}} \\
		S^8\ar@{=}[d]\ar[r]^-{k\check{i}_6^{s,r}\eta^2} &C_r^{7,s}\ar[d]^-{p_2^6\check q_{r}^s} \ar[r]&\check{C}(k,0,0)\ar[d]^-{\check q_{S}}\\
			S^8 \ar[r]^-{0} &S^6 \ar[r]&S^6\vee S^9
	}; \qquad	\xymatrix{
			S^8\ar@{=}[d] \ar[r]^-{k'\tilde{\eta}_r\eta} &  P^{6}(2^r) \ar[r]\ar[d]^-{\check i_{P}^r} & A_r(k',0)\ar[d]^-{\check{i}_{A}} \\
			S^8\ar@{=}[d]\ar[r]^-{k'\check i_{P}^r\tilde{\eta}_r\eta} &C_r^{7,s}\ar[d]^-{\check q_{P}^s} \ar[r]&\check{C}(0,k',0)\ar[d]^{\check q_P}\\
				S^8 \ar[r]^-{0} &P^7(2^{s}) \ar[r]&P^7(2^{s}) \vee S^9
		}. \label{Diagram:Ar to C(0,k',0)}
	\end{align}
	From the left diagram above we get the following commutative diagram 
	 \[\begin{tikzcd}[sep=scriptsize]
	 H^6(S^6\vee S^9)\ar[r," \check q_S^\ast", "\text{inject.}"swap]\ar[d,"\Theta",swap]& H^6(\check{C}(k,0,0))\cong \Z/2\oplus\Z/2 \ar[d,"\Theta"]\ar[r," \check i_C^\ast","\text{surject.}"swap ]&H^6(\hat{C}(k,0))\cong \Z/2\ar[d,"\Theta"]\\
	 H^{9}(S^6\vee S^9)\ar[r,"  \check q_S^\ast","\cong"swap]& H^{9}(\check{C}(k,0,0))\cong \Z/2\ar[r," \check i_C^\ast","\cong"swap]& H^6(\hat{C}(k,0))\cong \Z/2
	\end{tikzcd}.\]
	It is easy to check the top row is a split short exact sequence,  $\check q_S^\ast$ and $\check i_C^\ast$ in the bottom row are isomorphic.
  Moreover, $\Theta=0\colon H^{6}(S^6\vee S^9)\to  H^{9}(S^6\vee S^9)$. So the middle $\Theta$ above is isomorphic if and only if the right $\Theta$ is isomorphic.  Hence from (\ref{iff Theta hatC(k,0)}) we  get 
		\begin{align}
		\Theta\colon H^6(\check{C}(k,0,0))\to   H^{9}(\check{C}(k,0,0))~\text{is isomorphic~}\iff k=1. \label{iff Theta checkC(k,0,0)}
	\end{align}

 There is a map $A_r(k',0)\xra{q_{k'\eta^2}} \bm C_{k'\eta^2}$ satisfying  the 
  following left homotopy commutative diagram of cofibration sequences and $q_{k'\eta^2}$ induces the following right commutative diagram with isomorphisms $q^{\ast}_{k'\eta^2}$
  \begin{align*}
  		\xymatrix{
  		S^8\ar@{=}[d] \ar[r]^-{k'\tilde{\eta}_r\eta} &  P^{6}(2^r) \ar[r]\ar[d]^-{q_{6}^r} & A_r(k',0)\ar[d]^-{q_{k'\eta^2}} \\
  		S^8\ar[r]^-{k'\eta^2} &S^6 \ar[r]&\bm C_{k'\eta^2},
  	}\qquad  	\xymatrix{
  	H^6( \bm C_{k'\eta^2})\ar[d]^-{\Theta} \ar[r]_-{\cong}^-{q^{\ast}_{k'\eta^2}} & 	H^6( A_r(k',0))\ar[d]^-{\Theta}  \\
  	H^9( \bm C_{k'\eta^2}) \ar[r]_-{\cong}^-{q^{\ast}_{k'\eta^2}}&	H^9( A_r(k',0)).
  	}
  \end{align*}
  Also from the \cite[Lemma 2.7]{lipc23}, we get that  the left $\Theta$ above is non-trivial if and only if $k=1$. So is the same fact about the  right $\Theta$ above. 
  Now considering the diagram (\ref{Diagram:Ar to C(0,k',0)}), by the similar proof as that of (\ref{iff Theta checkC(k,0,0)}), we have 
	\begin{align}
	\Theta\colon H^6(\check{C}(0,k',0))\xra{\cong}  H^{9}(\check{C}(0,k',0))\iff k=1. \label{iff Theta checkC(0,k',0)}
\end{align}
	 $\check i_{P}^r\tilde{\eta}_r\eta\prec\check{i}_6^{s,r}\eta^2$ from Lemma \ref{lem:order pi6(X)} implies that   $\check{C}(1,1,0)=\check{C}(0,1,0)$. So by (\ref{iff Theta checkC(k,0,0)}) and (\ref{iff Theta checkC(0,k',0)}) we get
	\begin{align}
		\Theta\colon H^6(\check{C}(k,k',0))\xra{\cong}  H^{9}(\check{C}(k,k',0))\iff k=1~\text{or}~k'=1. \label{iff Theta hatC(k,k',0)}
	\end{align}
	Moreover, $\Theta=0\colon H^6(\check{C}(0,0,t))\to   H^{9}(\check{C}(0,0,t))$ 
	by the following (homotopy) commutative diagrams
	\begin{align*}
		\xymatrix{
			S^8\ar@{=}[d] \ar[r]^-{t\nu} &  S^{5} \ar[r]\ar[d]^-{\check{i}_5^{s,r}} & \bm C_{t\nu}\ar[d]^-{i_{t\nu}} \\
			S^8\ar[r]^-{t\check{i}_5^{s,r}\nu} &C_{r}^{7,s} \ar[r]&\check{C}(0,0,t),
		}\qquad  	\xymatrix{
			H^6( \check{C}(0,0,t))\ar[d]^-{\Theta} \ar[r]^-{i^{\ast }_{t\nu}} & 	H^6(  \bm C_{t\nu})=0\ar[d]^-{\Theta}  \\
			H^9( \check{C}(0,0,t)) \ar[r]_-{\cong}^-{i^{\ast }_{t\nu}}&	H^9(  \bm C_{t\nu}).
		}
	\end{align*}
	Now from Lemma \ref{lem:Cohom oper +} and (\ref{iff Theta hatC(k,k',0)}), we complete the proof of (\ref{case Theta on C(k,k',t)}) of Lemma \ref{lem:Theta}.
   \end{proof}
  \end{lemma}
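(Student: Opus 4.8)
The plan is to reduce everything, via naturality of $\Theta$ and the additivity Lemma~\ref{lem:Cohom oper +}, to the single fact that $\Theta$ detects $\eta^2$. First I would record the (routine) observation that $\Theta$ restricts to an everywhere-defined homomorphism $H^6(X;\z{})\to H^9(X;\z{})$ on each space $X$ in the statement: none of these complexes has an $8$-cell, so $\Sq^2$ vanishes out of $H^6$ and the cokernel defining $T_6$ collapses to $H^9/\Sq^2 H^7$; the kernel and cokernel conditions are then a cell-by-cell inspection of the Steenrod squares among the top three cells, using \eqref{Cof Moore}, the \textbf{Cof. List} and Lemma~\ref{lem:Sq2}. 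Since $A(1)=S^6\cup_{\eta^2}e^9$ is the mapping cone of $\eta^2\colon S^8\to S^6$ and $\Theta$ detects $\eta^2$ by construction (see \eqref{eq:Theta}, and \cite[Lemma~2.7]{lipc23}), $\Theta$ on $A(k)$ is non-trivial iff $k=1$; this is the base case and disposes of the $A(k)$ part of~(1).

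Next I would peel off the ``$\nu$-part'' of the top attaching map of each remaining $X$. Its top cell is attached by a sum one of whose summands is $t\cdot(\text{bottom-cell inclusion of }S^5)\circ\nu$; composing the cofibre sequence of this summand with that inclusion (from \textbf{Cof3} of the relevant Chang-complex list, and the analogous bottom-cell inclusions for $P^6(2^r)$, $C^7_r$, $C^{7,s}$, $C^{7,s}_r$) produces a map from the mapping cone $\bm C_{t\nu}$ of $t\nu\colon S^8\to S^5$ to that summand's cone which is an isomorphism on $H^9$ while $H^6(\bm C_{t\nu})=0$; naturality of $\Theta$ then forces $\Theta$ to vanish on the cone of the $\nu$-summand alone. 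By Lemma~\ref{lem:Cohom oper +}, $\Theta$ on $X$ is non-trivial iff $\Theta$ is non-trivial on the cone of the remaining ``$\eta^2$/$\tilde\eta_r\eta$-part''. So it suffices to treat $A^s(k,0)$, $A_r(k,0)$, $\bar C(k,0)$, $\hat C(k,0)$ and $\check C(k,k',0)$.

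For these I would cascade naturality of $\Theta$ along canonical maps (inclusions and pinch maps of the \textbf{Cof. List}) that are isomorphisms on $H^6$ and $H^9$ but only add or collapse a cell of another dimension. Explicitly: the bottom-cell inclusion $i^s_6\colon S^6\to P^7(2^s)$ extends to a map $A(k)\to A^s(k,0)$ missing only the $7$-cell; the pinch $q^r_6\colon P^6(2^r)\to S^6$ carries $k\tilde\eta_r\eta+ti^r_5\nu$ to $k\eta^2$ by \eqref{eq:eta}, giving $A_r(k,t)\to A(k)$; the inclusion $\bar i^r_P\colon P^6(2^r)\to C^7_r$ of \textbf{Cof2} extends to $A_r(k,0)\to\bar C(k,0)$; and the pinch $\hat q_P\colon C^{7,s}\to P^7(2^s)$ of \textbf{Cof3} extends to $\hat C(k,0)\to A^s(k,0)$. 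Each is an isomorphism on $H^6$ and $H^9$, so concatenating with the base case gives that $\Theta$ on each of $A^s(k,k')$, $A_r(k,t)$, $\bar C(k,t)$, $\hat C(k,t)$ is non-trivial iff $k=1$, which is~(1).

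Finally, for $\check C(k,k',t)$ the new feature is that two summands of the attaching map are of ``$\eta^2$-type'', $k\check i^{s,r}_6\eta^2$ and $k'\check i^r_P\tilde\eta_r\eta$, and that $H^6(\check C(k,k',0))$ has rank $2$. Using the inclusions $\check i^s_C\colon C^{7,s}\to C^{7,s}_r$ and $\check i^r_P\colon P^6(2^r)\to C^{7,s}_r$ of the $C^{n+2,s}_r$-list together with a projection of $C^{7,s}_r$ onto a bottom sphere, I would build maps of cofibre sequences showing that $\Theta$ vanishes on the rank-$1$ summand of $H^6(\check C(k,0,0))$ coming from that sphere while on the complementary summand it agrees with $\Theta$ on $\hat C(k,0)$; by~(1) this gives $\Theta$ on $\check C(k,0,0)$ non-trivial iff $k=1$, and symmetrically $\Theta$ on $\check C(0,k',0)$ non-trivial iff $k'=1$. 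The order relation $\check i^r_P\tilde\eta_r\eta\prec\check i^{s,r}_6\eta^2$ of Lemma~\ref{lem:order pi6(X)} gives $\check C(1,1,0)=\check C(0,1,0)$, which upgrades these computations to ``$\Theta$ on $\check C(k,k',0)$ is non-trivial iff $k=1$ or $k'=1$''; stripping the $\nu$-part via Lemma~\ref{lem:Cohom oper +} as in the second paragraph then yields~(2). The step I expect to be the main obstacle is exactly this web of reductions for the Chang complexes: one must pick, among the many cofibre sequences of the \textbf{Cof. List}, maps that simultaneously form a map of cofibre sequences compatible with the chosen representative of the attaching map and restrict to isomorphisms on both $H^6$ and $H^9$ (or, for $\check C$, on the relevant summands) while not being homotopy equivalences; and in~(2) one must track the identification $\check C(1,1,0)=\check C(0,1,0)$ carefully so that the conclusion is read as the inclusive disjunction ``$k=1$ or $k'=1$''.
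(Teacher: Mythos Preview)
Your proposal is correct and follows essentially the same route as the paper: naturality of $\Theta$ along maps of cofibre sequences built from the \textbf{Cof.\ List}, the additivity Lemma~\ref{lem:Cohom oper +} to peel off the $\nu$-summand, the base case that $\Theta$ detects $\eta^2$ (cited from \cite{lipc23}), and for part~(2) the identification $\check C(1,1,0)=\check C(0,1,0)$ coming from $\check i^r_P\tilde\eta_r\eta\prec\check i^{s,r}_6\eta^2$ in Lemma~\ref{lem:order pi6(X)}. Your chain of reductions for~(1) is exactly what the paper means by ``similar and easier'', and your treatment of~(2) matches the paper's diagrams almost verbatim.

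One bookkeeping slip to fix: your ``$\nu$-stripping'' paragraph does not reduce $A^s(k,k')$ to $A^s(k,0)$, because the second parameter of $A^s(k,k')$ is the coefficient of $\tilde\eta_s$, not of a $\nu$-term; so after peeling $\nu$ you should still have $A^s(k,k')$ on your list, and the map $A(k)\to A^s(k,0)$ via $i^s_6$ does not extend to $A^s(k,1)$. The paper glosses over this too (its blanket assertion $T_6(X)=H^9(X)$ already fails for $A^s(k,1)$, since by Lemma~\ref{lem:Sq2} one has $\Sq^2\colon H^7\to H^9$ onto there), and the application in Section~\ref{sect:proofs} only ever needs the case $k'=0$; but if you want the statement as written you should insert a remark that for $k'=1$ the indeterminacy of $\Theta$ is all of $H^9$, so the operation is vacuous and one may invoke $\alpha^s$ from Lemma~\ref{lem:reduce plus} to see $A^s(1,1)\simeq A^s(0,1)$.
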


\begin{lemma}\label{lem:Adams}
	Let $X=\bm C_{t\nu}=S^5\cup_{t\nu}e^9$, $A_r(k,t)$, $C^{\eta}(t)$, $\bar{C}(k,t)$, $\hat C(k,t)$ or $\check{C}(k,k',t)$.
Then the Adem operation 
\[\Psi\colon H^5(X;\z{})\to H^9(X;\z{})\] 
is an isomorphism if $t\equiv 2 \pmod 4$  and is trivial if $t\equiv 0 \pmod 4$. 
\end{lemma}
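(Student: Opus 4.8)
\textbf{Proof strategy for Lemma \ref{lem:Adams}.}
The plan is to reduce the general case to the single case $X = \bm C_{t\nu} = S^5 \cup_{t\nu} e^9$, for which the behaviour of the Adem operation $\Psi$ is essentially classical. First I would record that, for every space $X$ in the list, the mod $2$ cohomology is $\Z/2$ in degrees $5$ and $9$ (with possibly extra classes in degrees $6,7$ coming from the ``Chang'' or ``Moore'' summand), and that these bottom and top classes are the relevant domain and target of $\Psi$ in degree $5$. One must also check that $\Psi$ is defined on the degree $5$ generator, i.e.\ that $\Sq^1, \Sq^3, \Sq^4$ annihilate it; this is immediate since on each of these complexes the degree $5$ class maps isomorphically from $H^5(S^5)$ under the bottom-cell inclusion and there are no classes in degrees $6,7,8$ that it could hit via $\Sq^1,\Sq^2\Sq^1,\ldots$ except, in the Chang cases, classes on which the Adem relation already forces triviality.

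The heart of the argument is the base case $X = \bm C_{t\nu}$: since $\Psi$ detects $2\nu_n$ by \cite[Lemma 4.4.4]{Adams60}, and the $2$-primary component of $\pi_8(S^5) \cong \Z/{24}$ is $\Z/8$ generated by $3\nu$ with $2\nu = \Sigma^2\nu'$, the attaching map $t\nu$ has nonzero image $2\nu$ in the relevant quotient precisely when $t \equiv 2 \pmod 4$ and image $0$ when $t \equiv 0 \pmod 4$. Hence $\Psi\colon H^5(\bm C_{t\nu};\z{}) \to H^9(\bm C_{t\nu};\z{})$ is an isomorphism iff $t \equiv 2 \pmod 4$ and trivial iff $t \equiv 0 \pmod 4$. (The odd part $\alpha_1$ of $\nu$ contributes nothing mod $2$, so only $t \bmod 4$ matters; when $t$ is odd, $t\nu$ still has $2$-primary part an odd multiple of $\nu$, but those cases do not arise in the statement.)

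Next I would transfer this to the remaining spaces via Lemma \ref{lem:Cohom oper +}. For each $X$ in the list, write the attaching map of $e^9$ as a sum $f + g$, where $g = t\,(\text{inclusion})_*\nu$ factors through the bottom sphere $S^5 \hookrightarrow Z$ (here $Z$ is $P^6(2^r)$, $C^7_\eta$, $C^7_r$, $C^{7,s}$, or $C^{7,s}_r$), and $f$ is the remaining ``torsion-$\eta$'' part ($k i_6^s\eta^2 + \ldots$). The cofibre of $g$ is, up to the wedge-irrelevant summand, $Z \vee (S^5\cup_{t\nu} e^9)$ after one more application of Lemma \ref{lem:Cohom oper +} or directly $Z$ with a cone on $S^5\cup_{t\nu}e^9$ attached; in any case the restriction of $\Psi$ to the relevant $H^5 \to H^9$ is governed by $\bm C_{t\nu}$, and $\Psi$ on $\bm C_f$ is trivial because $f$ lands in the $2$-torsion summand of $\pi_8$ coming from $\eta$-multiples, which are detected by $\Sq^2$-type operations, not by $\Psi$ (equivalently, $H^5$ of $\bm C_f$ maps to $H^5$ of the $S^5$-subcomplex on which $\Psi$ is computed by the $\bm C_{t\nu}$ calculation, and the $f$-part adds nothing in degree $9$). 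The hypotheses of Lemma \ref{lem:Cohom oper +} — the isomorphisms $i_\alpha^*$ in degree $5$ and $q_\alpha^*$ in degree $9$ — hold because the bottom cell $S^5$ and the top cell $e^9$ each contribute exactly one $\Z/2$ to these degrees for all the spaces involved. Then Lemma \ref{lem:Cohom oper +} yields that $\Psi$ on $\bm C_{f+g}$ is nontrivial iff $\Psi$ on $\bm C_g$ is, completing the reduction.

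\textbf{Main obstacle.} The routine part is the cohomology bookkeeping; the one genuinely delicate point is verifying that $\Psi$ is \emph{defined} on the degree $5$ class of each Chang complex $C^7_r$, $C^{7,s}$, $C^{7,s}_r$ — i.e.\ that $\Sq^1$, $\Sq^3$, $\Sq^4$ kill it — and, relatedly, that in splitting the attaching map as $f+g$ the ``$f$-part'' truly contributes nothing to $\Psi$ and does not interfere with the indeterminacy of the secondary operation. This requires knowing the $\Sq^1,\Sq^2$-module structure on $H^*(Z;\z{})$ for each $Z$ (which is standard from the Cof.\ List) and checking that the secondary operation's indeterminacy, $\im(\Sq^1+\Sq^3)$ in degree $9$, is zero on these complexes, so that ``$\Psi \neq 0$'' is unambiguous; once that is in place the reduction via Lemma \ref{lem:Cohom oper +} is mechanical.
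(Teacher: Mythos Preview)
Your strategy is essentially the paper's: establish the base case $\bm C_{t\nu}$ via Adams, then split the attaching map of $e^9$ into the $t\nu$-part and the $\eta$-type part and combine via Lemma~\ref{lem:Cohom oper +}. One point where the paper is sharper than your sketch: to show $\Psi$ vanishes on the cofibre of the $\eta$-type part $f$, the paper does not argue abstractly that ``$\eta$-multiples are detected by $\Sq^2$-type operations, not by $\Psi$''. Instead it produces explicit maps $J_1\colon \bm C_{k\eta^2}\to \check C(k,0,0)$ and $J_2\colon \bm C_{k'\tilde\eta_1\eta}\to \check C(0,k',0)$ (induced by $\check i_6^{s,r}$ and $\check i_P^r B(\chi_r^1)$), each of which is an isomorphism on $H^9$; since $H^5(\bm C_{k\eta^2};\Z/2)=0$ and $\Sq^1$ is an isomorphism on $H^5(\bm C_{k'\tilde\eta_1\eta};\Z/2)$ (so the domain of $\Psi$ there is zero), naturality forces $\Psi=0$ on the targets. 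This sidesteps the direct verification you flag as the main obstacle and is worth adopting in place of your heuristic.
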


 \begin{proof}
 The lemma is true for $X=\bm C_{t\nu}$ by \cite[Lemma 4.4.4]{Adams60}. Next we only show this for $X=\check{C}(k,k',t)$ since the other cases are easier. 
 
 The lemma is also true for $X=\check{C}(0,0,t)$  by the naturality of $\Psi$ if we consider the map $\bm C_{t\nu}\xra{J}  \check{C}(0,0,t)$ induced by the following homotopy commutative diagram 
 	\begin{align*}
 	\xymatrix{
 		S^8\ar@{=}[d] \ar[r]^-{t\nu} & S^{5} \ar[r]\ar[d]^-{\check{i}_5^{s,r}} & \bm C_{t\nu}\ar[d]^-{J} \\
 		S^8\ar[r]^-{t\check{i}_5^{s,r}\nu} & C_{r}^{7,s}  \ar[r] &    \check{C}(0,0,t).
 	}
 \end{align*}
 
 There are the following homotopy commutative diagrams with cofibre sequence rows
 \begin{equation} \label{diagram:natural for Phi}
	\begin{aligned}
		\xymatrix{
			S^8\ar@{=}[d] \ar[r]^-{k\eta^2} & S^{6} \ar[r]\ar[d]^-{\check{i}_6^{s,r}} & \bm C_{k\eta^2}\ar[d]^-{J_1} \\
			S^8\ar[r]^-{k\check{i}_6^{s,r}\eta^2} & C_{r}^{7,s}  \ar[r] &     \check{C}(k,0,0)
		}; 	\xymatrix{
		S^8\ar@{=}[d] \ar[r]^-{k'\tilde{\eta}_1\eta} & P^{6}(2) \ar[r]\ar[d]^-{\check{i}_P^rB(\chi_r^1)} & \bm C_{k'\tilde{\eta}_1\eta}\ar[d]^-{J_2} \\
		S^8\ar[r]^-{k'\check{i}_P^r\tilde{\eta}_r\eta} & C_{r}^{7,s}  \ar[r] &   \check{C}(0,k',0)
		}.
	\end{aligned}
 \end{equation}	
  By the definition of $\Psi$, it is defined on the elements $u\in H^{5}(-;\Z/2)$ such that $Sq^i(u)=0$ for $i=1,3,4$.
 
 Since $H^5(\bm C_{k\eta^2};\Z/2)=0$ and $Sq^1\colon H^5(\bm C_{k'\tilde{\eta}_1\eta})\to  H^6(\bm C_{k'\tilde{\eta}_1\eta})$ is isomorphic,  by  commutative diagrams (\ref{diagram:natural for Phi}) and the naturality of $\Psi$, we get 
 \begin{align}
 \Psi=0 \colon H^5(X;\z{})\to H^9(X;\z{}), ~~ X=\check{C}(k,0,0),\check{C}(0,k',0)\label{trivial Psi}
 \end{align}
So the Lemma \ref{lem:Adams} is also ture for $X=\check{C}(k,k',t)$ by Lemma \ref{lem:Cohom oper +}.
 \end{proof}

  \begin{lemma}\label{lem:p-odd:n=3:P1}
    The first Steenrod power $\PP$ acts non-trivially on $H^5(X;\Z/3)$ for $X=S^5\cup_{\alpha_1(5)}e^9$, $P^{6}(3^r)\cup_{i^r_5\alpha_1(5)}e^9$, $C_{\eta}^7\cup_{i^{\eta}_5\alpha_1(5)}e^9$, $C^{7,s}\cup_{\hat i^{s}_5\alpha_1(5)}e^9$.
  
   \begin{proof}
     It is well-known that $\PP$ is non-trivial for $X=S^5\cup_{\alpha_1(5)}e^9$ (cf. \cite[Chapter 1.5.5]{Harperbook}).  For other complexes in the Lemma, $\PP\neq 0$ can be easily deduced from the naturality of $\PP$.
   \end{proof}
  \end{lemma}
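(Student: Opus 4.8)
The plan is to reduce everything to the single known case $X=S^5\cup_{\alpha_1(5)}e^9$, for which $\PP\colon H^5\to H^9$ is the classical nontriviality statement (it detects $\alpha_1(5)$, see \cite[Chapter 1.5.5]{Harperbook}), and then transport that fact along the obvious inclusion of the bottom cell in each of the remaining four complexes. First I would record, for each of the spaces $Y=P^6(3^r)$, $C_\eta^7$, $C^{7,s}$, that the canonical inclusion $j\colon S^5\hookrightarrow Y$ of the bottom cell induces an isomorphism $j^\ast\colon H^5(Y;\Z/3)\xrightarrow{\cong}H^5(S^5;\Z/3)$; this is immediate from the cell structure (for $C_\eta^7$ the extra cell is attached by $\eta$ so it contributes in degree $7$, for $C^{7,s}$ one is in degree $6$ and one in degree $7$, and for $P^6(3^r)$ the second cell is $3$-locally trivial so $H^5(\,\cdot\,;\Z/3)$ is one-dimensional with $j^\ast$ an iso). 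Likewise the top-cell pinch map gives an isomorphism in degree $9$.

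Next I would make the corresponding map of $4$-cell (or few-cell) complexes. For $X=Y\cup_{j\circ\alpha_1(5)}e^9$ the identity on the bottom $Y$ together with the identity on the top cell $e^9$ assembles, by the standard cofibration argument, into a map $\Phi\colon S^5\cup_{\alpha_1(5)}e^9\to X$ which is the inclusion of the subcomplex spanned by the bottom $5$-cell and the top $9$-cell; more precisely one has the homotopy-commutative ladder of cofibre sequences
\[
\xymatrix{
S^8\ar@{=}[d]\ar[r]^-{\alpha_1(5)}& S^5\ar[d]^-{j}\ar[r]& S^5\cup_{\alpha_1(5)}e^9\ar[d]^-{\Phi}\\
S^8\ar[r]^-{j\alpha_1(5)}& Y\ar[r]& X
}
\]
which produces $\Phi$ and shows $\Phi$ restricts to $j$ on the $5$-skeleton and to the identity on $e^9$. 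By naturality of the Steenrod power $\PP$ we then get the commuting square
\[
\xymatrix{
H^5(X;\Z/3)\ar[r]^-{\PP}\ar[d]^-{\Phi^\ast}& H^9(X;\Z/3)\ar[d]^-{\Phi^\ast}\\
H^5(S^5\cup_{\alpha_1(5)}e^9;\Z/3)\ar[r]^-{\PP}& H^9(S^5\cup_{\alpha_1(5)}e^9;\Z/3)
}
\]
in which, by the remark above, both vertical maps $\Phi^\ast$ are isomorphisms. Since the bottom $\PP$ is nonzero in the known case, the top $\PP$ is nonzero, which is the assertion for each of the remaining $X$.

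There is essentially no obstacle here: the only thing to be slightly careful about is that in each of $C_\eta^7$, $C^{7,s}$, $P^6(3^r)$ the intermediate cells genuinely lie in degrees that do not interfere with $H^5$ or $H^9$ mod $3$ — so that $\Phi^\ast$ really is an isomorphism in both degrees — and that the attaching class $\alpha_1(5)$ does lift compatibly, i.e. $j_\sharp\alpha_1(5)$ is precisely the attaching map written in the statement ($i_5^r\alpha_1(5)$, $i_5^\eta\alpha_1(5)$, $\hat i_5^s\alpha_1(5)$ respectively), which is true by definition of those symbols. If one prefers to avoid even naming $\Phi$, the same conclusion follows by contemplating the quotient map $X\to X/\mathrm{sk}_8 = S^9$ and the inclusion $S^5\hookrightarrow X$ and invoking the Adem-relation definition of $\PP$ directly on a class $u\in H^5(X;\Z/3)$ with $j^\ast u$ the generator; either way the computation is the classical one and nothing new is needed.
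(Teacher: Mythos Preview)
Your proposal is correct and follows essentially the same approach as the paper's own proof: reduce to the classical case $S^5\cup_{\alpha_1(5)}e^9$ and then invoke naturality of $\PP$ along the obvious map induced by the bottom-cell inclusion $S^5\hookrightarrow Y$. The paper states this in one line (``can be easily deduced from the naturality of $\PP$''), and you have simply spelled out the ladder of cofibre sequences and the verification that $\Phi^\ast$ is an isomorphism in degrees $5$ and $9$; the only imprecise phrase is calling the top cell of $P^6(3^r)$ ``$3$-locally trivial'' (it is not---rather, its attaching map $3^r$ induces zero on mod~$3$ cohomology), but your conclusion that $j^\ast$ is an isomorphism on $H^5(-;\Z/3)$ is correct regardless.
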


 \section{Homotopy decompositions of $\Sigma^3M$}\label{sect:proofs}
 Let $M$ be a closed oriented simply-connected 6-manifold with $H_\ast(M)$ given by (\ref{HM}). Using Wall's well-known splitting theorem for smooth or topological $6$-manifolds \cite{Wall66,Jupp73},  Huang \cite[Corollary 2.2]{Huang-6mflds} proved the following homotopy decomposition 
 \begin{align}
  \Sigma M\simeq\Sigma M'\vee dS^4, \label{equ:M=M'vee dS4}
 \end{align}
where $M'$ is a $6$-manifold with $H_\ast(M')$ given by
\begin{equation}\label{HM'}
	\begin{tabular}{cccccccc}
		\toprule
		$i$& $2$&$3$&$4$&$0,6$&$\text{otherwise}$
		\\\midrule
		$H_i(M')$& $\Z^l\oplus T$&  $T$ & $\Z^l$& $\Z$&$0$
		\\ \bottomrule
	\end{tabular}.
\end{equation}
Let $e^6$ be the top cell of $M'$ and denote by $\ov{M'}$ the complement of $e^6$ in $M'$. Note that the suspension space $\Sigma \ov{M'}$ is an $\mathbf{A}_3^2$-complex. Using the method of undeterminated coefficients and the indecomposable homotopy types of $\mathbf{A}_{3}^2$-complexes given in Section \ref{sect:intro},  we have a homotopy equivalence 
\begin{align*}
	\Sigma \ov{M'}\simeq &P^{4}(T_{\geq 3}) \vee P^{5}(T_{\geq 3}) \vee (l-k-t_3)S^{3}\vee k C_{\eta}^{5}\vee (l-k-t_2) S^{5} \\
	&\vee (\bigvee_{j=1}^{t_0}P^{5}(2^{s_j}))\vee(\bigvee_{j=1}^{t_1}P^{4}(2^{r_j})) \vee (\bigvee_{j=1}^{t_2}C_{\bar{r}_{j}}^{5})\vee (\bigvee_{j=1}^{t_3}C^{5,\hat{s}_j})\vee (\bigvee_{j=1}^{t_4}C_{\check{r}_{j}}^{5,\check{s}_j}),
\end{align*}
where $0\leq k+t_2, k+t_3\leq l$; $r_j (0\leq j\leq t_1)$, $\bar{r}_j (0\leq j\leq t_2)$,  $\check{r}_j (0\leq j\leq t_4)$ and $s_j (0\leq j\leq t_0)$, $\hat{s}_j (0\leq j\leq t_3)$,  $\check{s}_j (0\leq j\leq t_4)$ satisfy the equations 
\begin{align}
	&\bigoplus_{j=1}^{t_1}\z{r_j}\oplus \bigoplus_{j=1}^{t_2}\z{\bar{r}_j}\oplus \bigoplus_{j=1}^{t_4}\z{\check{r}_j}=T_2,~~t_1+t_2+t_4=m_2; \label{equ:ri} \\
	&\bigoplus_{j=1}^{t_0}\z{s_j}\oplus\bigoplus_{j=1}^{t_3}\z{\hat{s}_j}\oplus \bigoplus_{j=1}^{t_4}\z{\check{s}_j}=T_2,~~t_0+t_3+t_4=m_2. \label{equ:si} 
\end{align}

By the work of Culter and So \cite{CS22}, there is a homotopy equivalence 
\begin{equation}\label{eq:SM'}
	\Sigma M'\simeq P^4(T_{\geq 5})\vee P^5(T_{\geq 3})\vee \bm C_{h'},	
\end{equation}
where $\bm C_{h'}$ is the homotopy cofibre of some map $h'\colon S^6\to V_5'$ with 
\begin{equation}\label{eq:V_5}
	\begin{aligned}
	V_5'=&(\bigvee_{j=1}^{m_3}P^{4}(3^{r'_j}))\vee(l-k-t_3)S^{3}\vee k C_{\eta}^{5}\vee (l-k-t_2) S^{5} \vee (\bigvee_{j=1}^{t_0}P^{5}(2^{s_j}))\\
	&\vee(\bigvee_{j=1}^{t_1}P^{4}(2^{r_j}))  \vee (\bigvee_{j=1}^{t_2}C_{\bar{r}_{j}}^{5})\vee (\bigvee_{j=1}^{t_3}C^{5,\hat{s}_j})\vee (\bigvee_{j=1}^{t_4}C_{\check{r}_{j}}^{5,\check{s}_j}).
\end{aligned} 
\end{equation}
 Moreover,  \cite[Lemma 3.3]{CS22} implies that the composite 
 \[S^6\xra{h'} V_5'\to (\bigvee_{j=1}^{m_3}P^{4}(3^{r'_j}))\to P^4(3^{r'_j})\] 
 is generated by $i_3^{r_j'}\alpha_1(3)$, where the unlabeled maps are the obvious pinch maps.
 Let $Z$ be any a wedge summand  of $\Sigma V'_5$ and let $p_{Z}\colon \Sigma V'_5 \to Z$ be the canonical projection, then the composite  $S^7\xra{\Sigma h'}\Sigma V'_5 \xra{p_Z} Z$ is a suspension. Suspending (\ref{eq:SM'}) we have a homotopy equivalence
\begin{equation}\label{eq:S2M'}
	\Sigma^2 M'\simeq P^{6}(T_{\geq 3})\vee P^{5}(T_{\geq 5})\vee \bm C_{h''},
\end{equation}
where $h''=\Sigma h'\colon S^7\to  V''_6$ with
\begin{multline*}
	V''_6=\Sigma V'_5=(\bigvee_{j=1}^{m_3}P^{5}(3^{r'_j}))\vee (l-k-t_3)S^{4}\vee k C_{\eta}^{6}\vee (l-k-t_2) S^{6}\vee (\bigvee_{j=1}^{t_0}P^{6}(2^{s_j}))\\
	\vee(\bigvee_{j=1}^{t_1}P^{5}(2^{r_j}))  \vee (\bigvee_{j=1}^{t_2}C_{\bar{r}_{j}}^{6})\vee (\bigvee_{j=1}^{t_3}C^{6,\hat{s}_j})\vee (\bigvee_{j=1}^{t_4}C_{\check{r}_{j}}^{6,\check{s}_j}).
\end{multline*}
 Using the homotopy groups computed in Section \ref{sec:An2}, we may put 
 \begin{align*}
	h''=&\sum_{j=1}^{m_3} a'_j\cdot i_4^{r'_j}\alpha_{1}(4)+\sum_{j=1}^{l-k-t_3} b^S_j\cdot \Sigma \nu'+\sum_{j=1}^{k} b^{\eta}_j\cdot i_4^{\eta}\Sigma \nu'+\sum_{j=1}^{l-k-t_2} d^S_j\cdot \eta\\ 
	&+\sum_{j=1}^{t_0} \big(x_j\cdot \tilde{\eta}_{s_j}+y_j\cdot i_{5}^{s_j}\eta^2\big)+\sum_{j=1}^{t_1}\big(2^{r_j}k_j\cdot i_4^{r_j}\nu_4+a_j\cdot \tilde{\eta}_{r_j}\eta+b_j\cdot i_{4}^{r_j}\Sigma\nu'\big)\\
	&+\sum_{j=1}^{t_2}\big(2^{\bar{r}_j}\bar{k}_j\cdot i_4^{\bar{r}_j}\nu_4+\bar{a}_j\cdot \bar{i}_{P}^{\bar{r}_j}\tilde{\eta}_{\bar{r}_j}\eta+\bar{b}_j\cdot \bar{i}_{4}^{\bar{r}_j}\Sigma\nu'\big)+\sum_{j=1}^{t_3}\big(\hat{b}_j\cdot \hat{i}^{\hat{s}_{j}}_4\Sigma \nu'+\hat{y}_j\cdot \hat{i}^{\hat{s}_j}_5\eta^2\big)\\
	&+\sum_{j=1}^{t_4}\big(2^{\check{r}_j}\check{k}_j\cdot \check{i}^{\check{s}_j,\check{r}_j}_4\nu_4+\check{a}_j\cdot \check{i}_{P}^{\check{r}_j}\tilde{\eta}_{\check{r}_j}\eta+\check{b}_j\cdot \check{i}^{\check{s}_j,\check{r}_j}_4\Sigma\nu'+\check{y}_j\cdot \check{i}^{\check{s}_j,\check{r}_j}_5\eta^2\big),
\end{align*}
where $k_j,\bar{k_j},\check{k_j}=0$ or $1$; $a'_j\in\Z/3$; $b_j^S\in \Z/12$; $b_j^{\eta}, \hat{b}_j\in \Z/6$; $x_j\in \Z/4$ and $\Z/2$ for $s_j=1$ and $s_j\geq 2$ respectively;
$y_j\in (1-\delta_{s_j})\Z/2$ ($s_j=1$ applies $y_j=0$); $d_j^S, a_j, \bar{a}_j,  \hat{y}_j, \check{a}_j,  \check{y}_j\in \Z/2$; $b_j\in \Z/2^{m_{r_j}^2}$; $\bar{b}_j\in (1-\delta_{\bar{r}_j})\Z/2$; $\check{b}_j\in (1-\delta_{\check{r}_j})\Z/2 $.

The discussion of the homotopy types of $\bm C_{h''}$ is complicated because of the linearly independence of $\nu_4,\Sigma\nu'$. After one more suspension, there holds $\Sigma^2\nu'=2\nu$ and the expression for $\Sigma h''$ has a simpler form: 
 \begin{align*}
\Sigma h''	=	&\sum_{j=1}^{m_3} a'_j\cdot i_5^{r'_j}\alpha_{1}(5)+\sum_{j=1}^{l-k-t_3} 2b^S_j\cdot \nu+\sum_{j=1}^{k} 2b^{\eta}_j\cdot i_5^{\eta} \nu+\sum_{j=1}^{l-k-t_2} d^S_j\cdot \eta\\
	&+\sum_{j=1}^{t_0}\big(x_j\cdot \tilde{\eta}_{s_j}+y_j\cdot i_{6}^{s_j}\eta^2\big)+\sum_{j=1}^{t_1}\big(a_j\cdot \tilde{\eta}_{r_j}\eta+2b_j\cdot i_{5}^{r_j}\nu\big)\\
	& +\sum_{j=1}^{t_2}\big(\bar{a}_j\cdot \bar{i}_{P}^{\bar{r}_j}\tilde{\eta}_{\bar{r}_j}\eta+2\bar{b}_j\cdot \bar{i}_{5}^{\bar{r}_j}\nu\big)+\sum_{j=1}^{t_3}\big(2\hat{b}_j\cdot\hat{i}^{\hat{s}_{j}}_5 \nu+\hat{y}_j\cdot \hat{i}^{\hat{s}_j}_6\eta^2\big)
	\\
	&+\sum_{j=1}^{t_4}\big(\check{a}_j\cdot \check{i}_{P}^{\check{r}_j}\tilde{\eta}_{\check{r}_j}\eta+2\check{b}_j\cdot \check{i}^{\check{s}_j,\check{r}_j}_5\nu+\check{y}_j\cdot\check{i}^{\check{s}_j,\check{r}_j}_6\eta^2\big).
\end{align*}

From now on we assume that the Adem operation $\Psi$ acts trivially on $H^2(M;\z{})$. Then Lemma \ref{lem:Adams} implies that   $b_j^S, b_j^{\eta}, b_j, \bar{b}_j, \hat{b}_j, \check{b}_j$ must be divided by $2$ for all $j$ in their corresponding ranges. Since $4i_{5}^{r}\nu=i_{5}^{r}\eta^3$ for $r\geq 3$ and $4i_{5}^{r}\nu=0$ for $r=1,2$, while $4\bar{i}_{5}^{\bar{r}_j}\nu=4\check{i}^{\check{s}_j,\check{r}_j}_5\nu=0$, we have 
\begin{equation}\label{equ:Sigma g7} 
	 \begin{aligned}
		\Sigma h''=&\sum_{j=1}^{m_3} a'_j\cdot i_5^{r'_j}\alpha_{1}(5)+\sum_{j=1}^{l-k-t_3} 4c^S_j\cdot \nu+\sum_{j=1}^{k} 4c^{\eta}_j\cdot i_5^{\eta} \nu+\sum_{j=1}^{l-k-t_2} d^S_j\cdot \eta \\
	&+\sum_{j=1}^{t_0}\big( x_j\cdot \tilde{\eta}_{s_j}+y_j\cdot i_{6}^{s_j}\eta^2\big)+\sum_{j=1}^{t_1}\big(a_j\cdot \tilde{\eta}_{r_j}\eta+c_j\cdot i_{5}^{r_j}\eta^{3}\big)\\
	&+\sum_{j=1}^{t_2}\big(\bar{a}_j\cdot \bar{i}_{P}^{\bar{r}_j}\tilde{\eta}_{\bar{r}_j}\eta\big)+\sum_{j=1}^{t_3}\big(4\hat{c}_j\cdot \hat{i}^{\hat{s}_{j}}_5 \nu+\hat{y}_j\cdot \hat{i}^{\hat{s}_j}_6\eta^2\big)\\
	&+\sum_{j=1}^{t_4}\big(\check{a}_j\cdot  \check{i}_{P}^{\check{r}_j}\tilde{\eta}_{\check{r}_j}\eta+\check{y}_j\cdot\check{i}^{\check{s}_j,\check{r}_j}_6\eta^2\big),  
\end{aligned}
\end{equation}
where $c_j^S\in\{0,1,2,3,4,5\}$ , $c_j^{\eta}, \hat{c}_j\in \{0,1,2\}$, $c_j\in\{0,1\}$ for $r_j\geq 3$ and $c_j=0$ for $r_j=1,2$.

\subsection{Reduction of coefficients}\label{sect:reduction}
Let $\alpha\colon W\to  X$ and $\beta\colon W\to  Y$ be (based) maps, we denote $\alpha\prec \beta$ if $\beta=f\alpha$ for some map $f:X\to  Y$.  It is easy to see that $\prec$ has the transitive property: if $\alpha\prec\beta$ and $\beta\prec\gamma$ for $\gamma\colon W\to  Z$, then $\alpha\prec\gamma$.
Moreover $\alpha\prec\beta$ implies that $\matwo{\alpha}{\beta}\sim \matwo{\alpha}{0}\colon W\to  X\vee Y$.

\begin{lemma}\label{lem:order pi6(X)}
	Let $s<s'$, $r<r'$ be positive integers. For maps $S^{n+3}\to X$ with $X$ elementary $\an{2}$-complexes, $n\geq 4$, there holds a chain of relations:
	\begin{multline*}
		\tilde{\eta}_s\prec\tilde{\eta}_{s'}\prec\eta\prec\tilde{\eta}_{r}\eta\prec\check{i}^r_{P}\tilde{\eta}_{r}\eta\prec\bar{i}^r_{P}\tilde{\eta}_{r}\eta\prec \tilde{\eta}_{r'}\eta
		\prec i_{n+1}^{s'}\eta^2\\\prec\hat{i}^{s}_{n+1}\eta^2
		\prec\check{i}_{n+1}^{s,r}\eta^2\prec i_{n+1}^{s}\eta^2\prec\eta^{3}\prec i_{n}^r\eta^3.
	\end{multline*}		
\begin{proof} 
	Without loss of generality, we only prove the chain when $n=4$.

	(a) The relations $\tilde{\eta}_s\prec \tilde{\eta}_{s'}\prec\eta$ follow by the formulae 
	\[\tilde{\eta}_{s'}=B(\chi_{s'}^s)\tilde{\eta}_s,\quad q_{5}^s\tilde{\eta}_{s'}=\eta\in\pi_{7}(S^6).\] 
	
	(b) $\check{i}^r_{P}\tilde{\eta}_{r}\eta\prec \bar{i}^r_{P}\tilde{\eta}_{r}\eta$ holds for $\check{q}_C^r\check{i}_P^r=\bar{i}_P^r$. 
	
	(c) $\bar{i}^r_{P}\tilde{\eta}_{r}\eta \prec  \tilde{\eta}_{r'}\eta$: Consider the following homotopy commutative diagram that induces the map $\bar{q}^r_{r'}$:
	\begin{equation}\label{diagram qrr'}
	\begin{aligned}
		\xymatrix{
			S^4 \ar@{=}[d] \ar[r]^-{2^r} & S^4 \ar[d]^-{\hat{i}^{s}_4} \ar[r]^-{i_{4}^r} & P^5(2^r) \ar[d]^-{\check{i}^r_{P}} \ar[r]^-{q_5^r} & S^5\ar@{=}[d] \\
			S^4\ar@{=}[d] \ar[r]^-{2^r\hat{i}^{s}_4} & C^{6,s}\ar[d]^-{\hat{q}_{\eta}}  \ar[r]^-{\check{i}^s_{C}} & C^{6,s}_{r} \ar[r]\ar[d]^-{\check{q}^r_{C}} & S^5\ar@{=}[d]\\
			S^4\ar@{=}[d] \ar[r]^-{2^r i^{\eta}_{4}} & C^{6}_{\eta} \ar[r]^-{\bar{i}_{\eta}}\ar[d]^-{2^{r'-r-1}\bar{\zeta}_{4}} & C^{6}_{r} \ar[r]^-{\bar{q}_5^r}\ar[d]^-{\bar{q}_{r'}^r} & S^5\ar@{=}[d] \\
			S^4\ar[r]^-{2^{r'}} & S^4  \ar[r]^-{i_{4}^{r'}} & P^5(2^{r'})  \ar[r] & S^5
		} 
	\end{aligned}.
\end{equation}
	The compositions in the middle two columns imply that  
	\[\bar{q}_{r'}^r \check{q}^r_{C} \check{i}^r_{P}=B(\chi_{r'}^{r})+\varepsilon \cdot i^{r'}_4\eta q^r_5\text{ for some $\varepsilon\in\z{}$}. \] 
	After replacing $\bar{q}_{r'}^r$ by $\bar{q}_{r'}^r+  i^{r'}_{4}\eta \bar{q}_5^r$ if necessary, we can assume that $\varepsilon=0$.
	Then by (\ref{eq:chi-eta}) we have 
	\begin{align}
		\tilde{\eta}_{r'}\eta=B(\chi_{r'}^{r})\tilde{\eta}_{r}\eta=\bar{q}_{r'}^r \check{q}^r_{C} \check{i}^r_{P}\tilde{\eta}_{r}\eta=\bar{q}_{r'}^r\bar{i}^r_{P}\tilde{\eta}_{r}\eta. \label{equ:qrr'}
	\end{align}
	
	(d) $\tilde{\eta}_{r'}\eta\prec i^{s'}_5\eta^2$:   $i_5^{s'}q_5^{r'}\tilde{\eta}_{r'}\eta=i_5^{s'}\eta^2$.
	
	(e)  $i^{s'}_5\eta^2\prec \hat{i}^{s}_5\eta^2\prec \check{i}_5^{s,r}\eta^2\prec i_5^{s}\eta^2$:  Consider the following homotopy commutative diagram
	\begin{equation}\label{Diagram 2 for <<}
		\begin{aligned}
		\xymatrix{
			S^5\ar[d]_-{2^{s'-s-1}\bar{\xi}_{5}} \ar[r]^-{2^{s'}} & S^{5}\ar@{=}[d]  \ar[r]^-{i^{s'}_{n+1}} & P^{6}(2^{s'}) \ar[r]\ar[d]^-{\bar{\xi}_{s}^{s'}} & S^{6}\ar[d]\\
			C_{\eta}^5\ar[d]_-{\bar{i}_{\eta}} \ar[r]^-{2^sq_5^{\eta}} & S^{5}\ar@{=}[d]  \ar[r]^-{\hat{i}^s_5} & C^{6,s} \ar[r]\ar[d]^-{\check{i}^s_{C}} & C_{\eta}^6\ar[d]\\
			C_{r}^5\ar[d]_-{\bar{q}_5^r} \ar[r]^-{2^s \bar{q}_5^r} & S^{5} \ar@{=}[d] \ar[r]^-{\check{i}_5^{s,r}} & C^{6,s}_{r} \ar[r]\ar[d]^-{\check{q}_{P}^s}& C_{r}^6\ar[d] \\
			S^5\ar[r]^-{2^{s}} & S^5  \ar[r]^-{i_{5}^{s}} & P^6(2^{s})  \ar[r] & S^6
		}, 
	\end{aligned}
	\end{equation}
	where all rows are homotopy cofibre sequences,
	 $\bar{\xi}_{s}^{s'}$ is the induced map by the left-top-homotopy commutative square (by Lemma \ref{Lem gen of Ceta}) in the above diagram. 
	Then we have \begin{align*}
		i^{s'}_5\eta^2\prec \hat{i}^{s}_5\eta^2&:~~i_5^{s'}\bar{\xi}_{s'}^{s}=\hat{i}^s_5(s<s');	\\
		\hat{i}^{s}_5\eta^2\prec\check{i}^{s,r}_5\eta^2 &:~~\check{i}^{s,r}_5\eta^2=\check{i}^s_{C}\hat{i}^{s}_5\eta^2;\\
		\check{i}^{s,r}_5\eta^2\prec i_5^{s}\eta^2&:~~i_5^s\eta^2=\check{q}_{P}^s\check{i}^{s,r}_5\eta^2.
	\end{align*}

	(f)  $i_5^{s}\eta^2\prec \eta^{3}$: $\tilde{\eta}^{s}i_{5}^s=\eta$.
\end{proof}
\end{lemma}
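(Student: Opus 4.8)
The plan is to prove the chain one link at a time and then assemble it using the transitivity of $\prec$ noted above. Since the entire picture is obtained by suspension from the case $n=4$ (every complex and every homotopy class occurring is a suspension of its $n=4$ analogue) and $\prec$ is witnessed purely by a \emph{post}composition, it suffices to treat $n=4$ and then suspend the witnessing maps. Thus each link $\alpha\prec\beta$ reduces to exhibiting a map $f$ between the relevant elementary $\an{2}$-complexes with $f\alpha=\beta$, and for most links such an $f$ can be read off directly from the identities collected in Section~\ref{sec:An2}.

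First I would dispatch the ``formal'' links. The relation $\tilde\eta_s\prec\tilde\eta_{s'}$ ($s<s'$) is witnessed by $B(\chi^{s}_{s'})$, using $B(\chi^{s}_{s'})\tilde\eta_s=\tilde\eta_{s'}$ from (\ref{eq:chi-eta}); and $\tilde\eta_{s'}\prec\eta$ by the pinch map, using $q\,\tilde\eta_{s'}=\eta$ from Lemma~\ref{lem:Moore1}(3). The links $\eta\prec\tilde\eta_r\eta$ and $\tilde\eta_r\eta\prec\check i_P^r\tilde\eta_r\eta$ are witnessed by $\tilde\eta_r$ and by $\check i_P^r$. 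The link $\check i_P^r\tilde\eta_r\eta\prec\bar i_P^r\tilde\eta_r\eta$ follows from $\check q_C^r\check i_P^r=\bar i_P^r$, obtained by comparing \textbf{Cof2}, \textbf{Cof4} of the \textbf{Cof. List} for $C^{n+2,s}_r$ with \textbf{Cof2} for $C^{n+2}_r$. The link $\tilde\eta_{r'}\eta\prec i^{s'}_{n+1}\eta^2$ is witnessed by $i^{s'}_{n+1}\circ q$, again via $q\,\tilde\eta_{r'}=\eta$. For $i^{s}_{n+1}\eta^2\prec\eta^3$: since $\eta$ has order $2$ it extends over $P^{n+2}(2^{s})$, and any extension $f$ gives $f\circ(i^{s}_{n+1}\eta^2)=\eta^3$. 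Finally $\eta^3\prec i_n^r\eta^3$ is witnessed by the inclusion $i_n^r$.

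The substance of the proof is the remaining pair of links: $\bar i_P^r\tilde\eta_r\eta\prec\tilde\eta_{r'}\eta$ (for $r<r'$) and the triple $i^{s'}_{n+1}\eta^2\prec\hat i^{s}_{n+1}\eta^2\prec\check i^{s,r}_{n+1}\eta^2\prec i^{s}_{n+1}\eta^2$ (for $s<s'$). For each I would construct a ladder of four horizontal cofibre sequences whose top row is the standard sequence of the smaller-torsion Moore space, whose bottom row is that of the larger-torsion Moore space, and whose two inner rows are defining cofibrations of the relevant elementary Chang complexes from the \textbf{Cof. List}; the vertical connecting maps are assembled from the canonical inclusions and pinches, a degree $2^{r'-r-1}$ (resp.\ $2^{s'-s-1}$) self-map of a sphere, and the maps $\bar\zeta_n$ of Lemma~\ref{Lem gen of Ceta} with $\bar\zeta_n i_n^{\eta}=2\cdot 1_n$. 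In the first case the two inner columns produce a map $\bar q^{r}_{r'}\colon C^{n+2}_r\to P^{n+1}(2^{r'})$ with $\bar q^{r}_{r'}\bar i^r_P=B(\chi^{r}_{r'})+\varepsilon\cdot i^{r'}_{n}\eta\,q^{r}_{n+1}$ for some $\varepsilon\in\z{}$; since replacing $\bar q^{r}_{r'}$ by $\bar q^{r}_{r'}+i^{r'}_{n}\eta\,\bar q^{r}_{n+1}$ toggles $\varepsilon$, one may take $\varepsilon=0$, and then $\bar q^{r}_{r'}\bar i^r_P\tilde\eta_r\eta=B(\chi^{r}_{r'})\tilde\eta_r\eta=\tilde\eta_{r'}\eta$ by (\ref{eq:chi-eta}), so $\bar q^{r}_{r'}$ witnesses the link. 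The second ladder is entirely analogous and simultaneously yields a map $\bar\xi\colon P^{n+2}(2^{s'})\to C^{n+2,s}$ together with $\check i^s_C$ and $\check q^s_P$ satisfying $\bar\xi\, i^{s'}_{n+1}=\hat i^s_{n+1}$, $\check i^s_C\hat i^s_{n+1}=\check i^{s,r}_{n+1}$ and $\check q^s_P\check i^{s,r}_{n+1}=i^s_{n+1}$, which give the three links at once. Chaining all links together by transitivity of $\prec$ completes the proof.

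The part I expect to be delicate is not any single homotopy computation but the bookkeeping required to set up these two ladders correctly across all four families of elementary Chang complexes: one must check that the vertical maps can be chosen so that every square commutes up to homotopy, and control the indeterminacy $\varepsilon\cdot i^{r'}_{n}\eta\,q^{r}_{n+1}$ (and its analogue in the second ladder) that appears when a diagonal composite is read in two ways. Once the ladders and the correction terms are under control, all thirteen links --- and hence the whole chain --- follow.
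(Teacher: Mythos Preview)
Your proposal is correct and follows essentially the same route as the paper's proof: the ``formal'' links are dispatched exactly as you indicate, and for the two substantive links $\bar i_P^r\tilde\eta_r\eta\prec\tilde\eta_{r'}\eta$ and $i^{s'}_{n+1}\eta^2\prec\hat i^{s}_{n+1}\eta^2\prec\check i^{s,r}_{n+1}\eta^2\prec i^{s}_{n+1}\eta^2$ the paper also builds the two four-row ladders you describe (using $\bar\zeta_n$ and the relevant degree maps), obtains $\bar q^r_{r'}$ with the same $\varepsilon$-indeterminacy that is then corrected by adding $i^{r'}_n\eta\,\bar q^r_{n+1}$, and reads off the three remaining identities from the second ladder. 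The only minor difference is that the paper does not worry about an $\varepsilon$-correction in the second ladder, since there the induced map $\bar\xi^{s'}_s$ satisfies $\bar\xi^{s'}_s i^{s'}_{n+1}=\hat i^s_{n+1}$ on the nose by construction of the cofibre comparison; your extra caution there is harmless.
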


Lemma \ref{lem:order pi6(X)} can also be proved in terms of the explicit generators of the groups $[X,Y]$ characterized in \cite{lipc22}, where $X,Y$ are elementary $\an2$-complexes. Since \cite{lipc22} still doesn't get published, we adopt the above proof arguments. The following can be easily deduced from  Lemma \ref{lem:order pi6(X)}. 

\begin{lemma}\label{lem:index-orders}
	For positive integers $r'>r,s'>s$ and $r''\geq 1$, there hold 
\[\check{i}^r_{P}\tilde{\eta}_{r}\eta\prec \check{i}^{r'}_{P}\tilde{\eta}_{r'}\eta,\quad 
\bar{i}^r_{P}\tilde{\eta}_{r}\eta\prec\bar{i}^{r'}_{P}\tilde{\eta}_{r'}\eta,\quad 
\hat{i}^{s'}_{n+1}\eta^2\prec\hat{i}^{s}_{n+1}\eta^2,\quad \check{i}^{s',r}_{n+1}\eta^2\prec \check{i}^{s,r''}_{n+1}\eta^2.\]
\end{lemma}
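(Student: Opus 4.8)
The plan is to derive all four relations purely formally from the chain of $\prec$-relations established in Lemma~\ref{lem:order pi6(X)}, using only two elementary facts: the transitivity of $\prec$, and the trivial observation that $\gamma\prec g\circ\gamma$ for any composable map $g$ (which is the special case $f=g$ of the definition). The one genuinely useful input is that the chain of Lemma~\ref{lem:order pi6(X)} holds for \emph{every} admissible choice of its free indices $r<r'$ and $s<s'$, so I am free to re-instantiate it with the indices relabelled and then splice the resulting chains together.

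First I would dispose of the two $\tilde{\eta}$-relations. Lemma~\ref{lem:order pi6(X)} already contains the links $\check{i}^r_{P}\tilde{\eta}_{r}\eta\prec\bar{i}^r_{P}\tilde{\eta}_{r}\eta$ and $\bar{i}^r_{P}\tilde{\eta}_{r}\eta\prec\tilde{\eta}_{r'}\eta$, so postcomposing $\tilde{\eta}_{r'}\eta$ with the inclusion $\check{i}^{r'}_{P}$ (respectively $\bar{i}^{r'}_{P}$) gives $\tilde{\eta}_{r'}\eta\prec\check{i}^{r'}_{P}\tilde{\eta}_{r'}\eta$ (respectively $\tilde{\eta}_{r'}\eta\prec\bar{i}^{r'}_{P}\tilde{\eta}_{r'}\eta$). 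Concatenating and invoking transitivity yields $\check{i}^r_{P}\tilde{\eta}_{r}\eta\prec\check{i}^{r'}_{P}\tilde{\eta}_{r'}\eta$ and $\bar{i}^r_{P}\tilde{\eta}_{r}\eta\prec\bar{i}^{r'}_{P}\tilde{\eta}_{r'}\eta$.

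Next I would handle the two $\eta^2$-relations, which need a second copy of the chain. Running Lemma~\ref{lem:order pi6(X)} with the pair $(s',s'+1)$ in place of $(s,s')$ (and any admissible $r<r'$) produces $\hat{i}^{s'}_{n+1}\eta^2\prec\check{i}^{s',r}_{n+1}\eta^2\prec i^{s'}_{n+1}\eta^2$, while the original instance, with the pair $(s,s')$, supplies $i^{s'}_{n+1}\eta^2\prec\hat{i}^{s}_{n+1}\eta^2$; transitivity then gives $\hat{i}^{s'}_{n+1}\eta^2\prec\hat{i}^{s}_{n+1}\eta^2$. For the last relation I would combine the link $\check{i}^{s',r}_{n+1}\eta^2\prec i^{s'}_{n+1}\eta^2$ from the $(s',s'+1)$-instance with $i^{s'}_{n+1}\eta^2\prec\hat{i}^{s}_{n+1}\eta^2\prec\check{i}^{s,r''}_{n+1}\eta^2$ from the $(s,s')$-instance run with $r''$ as its lower $r$-index, obtaining $\check{i}^{s',r}_{n+1}\eta^2\prec\check{i}^{s,r''}_{n+1}\eta^2$.

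There is no real obstacle here, which is exactly why the statement is presented as an easy deduction; the only thing to be careful about is the index bookkeeping. Concretely, I would check that each invoked link is applied with its hypothesis met (for instance the passage $i^{s'}_{n+1}\eta^2\prec\hat{i}^{s}_{n+1}\eta^2$ is legitimate only because $s<s'$), and I would note explicitly that the links $\check{i}^r_{P}\tilde{\eta}_{r}\eta\prec\bar{i}^r_{P}\tilde{\eta}_{r}\eta$, $\hat{i}^{s}_{n+1}\eta^2\prec\check{i}^{s,r}_{n+1}\eta^2$ and $\check{i}^{s,r}_{n+1}\eta^2\prec i^{s}_{n+1}\eta^2$ in the proof of Lemma~\ref{lem:order pi6(X)} depend only on the displayed index and not on the partner index, so that they remain valid after the relabellings $r\rightsquigarrow r''$ and $s\rightsquigarrow s'$ used above.
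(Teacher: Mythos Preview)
Your argument is correct and is exactly the kind of deduction the paper has in mind: the authors give no proof beyond the sentence ``The following can be easily deduced from Lemma~\ref{lem:order pi6(X)}'', and your re-instantiation of that chain with shifted indices together with transitivity and the trivial relation $\gamma\prec g\gamma$ carries this out in full. The index bookkeeping you flag (in particular that the links $\hat{i}^{s}_{n+1}\eta^2\prec\check{i}_{n+1}^{s,r}\eta^2$ and $\check{i}_{n+1}^{s,r}\eta^2\prec i_{n+1}^{s}\eta^2$ from the proof of Lemma~\ref{lem:order pi6(X)} are valid for arbitrary $r$) is accurate.
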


\begin{lemma}\label{lem:reduce plus}
	For $n\geq 3$, there are self-homotopy equivalences $\alpha_r$, $\alpha^s$, $\alpha^s_r$  of $P^{n+1}(2^r)$,  $P^{n+2}(2^s)$, $C^{n+2,s}_r$ respectively such that 
	 \begin{align*}
		 &\alpha_r(i_n^r\eta^3+\tilde{\eta}_r\eta)=\tilde{\eta}_r\eta,\quad \alpha^s(i_{n+1}^s\eta^2+\tilde{\eta}_s)=\tilde{\eta}_s,\\
		 &\alpha^s_r(\check{i}_{n+1}^{s,r}\eta^2+\check{i}^r_P\tilde{\eta}_r\eta)=\check{i}^r_P\tilde{\eta}_r\eta.
	 \end{align*}
	 
 \end{lemma}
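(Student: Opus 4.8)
The plan is to realize each of the three equivalences in the form $\alpha=\mathrm{id}_X+\phi$, where $\phi$ is a composite of canonical inclusion/pinch maps from the \textbf{Cof. List} together with a single copy of $\eta$, chosen so that $\phi\circ\phi\simeq\ast$. All spaces involved are suspensions, so pre- and post-composition distribute over the relevant group operations; in particular $(\mathrm{id}+\phi)\circ(\mathrm{id}-\phi)=\mathrm{id}-\phi^2$, and hence $\phi^2\simeq\ast$ already makes $\alpha$ a self-homotopy equivalence with inverse $\mathrm{id}-\phi$. The asserted identities are then read off from the cofibre-sequence relations ``$q\circ i\simeq\ast$'', the formula $q^r_{n+1}\tilde\eta_r=\eta$ of (\ref{eq:eta}), and the facts $2\eta^2=0$ in $\pi_{n+3}(S^{n+1})$ and $2\eta^3=0$ in $\pi_{n+3}(S^n)$ (recall $\eta_n^3\in\{6\nu',6\Sigma\nu',12\nu_n\}$ has order $2$).

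First I would dispose of $\alpha_r$ and $\alpha^s$, which are routine. For $\alpha_r$ take $\phi_r=i^r_n\,\eta\,q^r_{n+1}$, the order-two self-map of $P^{n+1}(2^r)$ appearing in Lemma \ref{lem:Moore1}(\ref{Moore1-PP}). Since $q^r_{n+1}i^r_n\simeq\ast$ we get $\phi_r^2\simeq\ast$, $\phi_r\circ i^r_n\simeq\ast$, and $\phi_r\circ\tilde\eta_r=i^r_n\eta\cdot\eta=i^r_n\eta^2$ by (\ref{eq:eta}); hence $\alpha_r(i^r_n\eta^3)=i^r_n\eta^3$ and $\alpha_r(\tilde\eta_r\eta)=\tilde\eta_r\eta+i^r_n\eta^3$, so that $\alpha_r(i^r_n\eta^3+\tilde\eta_r\eta)=\tilde\eta_r\eta+2\,i^r_n\eta^3=\tilde\eta_r\eta$. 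The map $\alpha^s$ is obtained verbatim one dimension up, with $\phi^s=i^s_{n+1}\,\eta\,q^s_{n+2}$, using $q^s_{n+2}\tilde\eta_s=\eta$ and $2\eta^2=0$.

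The essential case is $\alpha^s_r$, where the naive analogue fails: any map $C^{n+2,s}_r\to S^{n+2}$ kills $\check i^r_P\tilde\eta_r$ for dimensional reasons, so $\check i^{s,r}_{n+1}\eta$ cannot be produced from $\check i^r_P\tilde\eta_r$ by a single pinch-then-include. Instead I would route through the cofibration \textbf{Cof4} of $C^{n+2,s}_r$ and the cofibrations \textbf{Cof1}, \textbf{Cof2} of $C^{n+2}_r$, setting
\[
\phi^s_r=\check i^{s,r}_{n+1}\circ\bar q^r_{n+1}\circ\check q^r_C\colon\ C^{n+2,s}_r\xra{\check q^r_C}C^{n+2}_r\xra{\bar q^r_{n+1}}S^{n+1}\xra{\check i^{s,r}_{n+1}}C^{n+2,s}_r .
\]
Because $\check q^r_C\circ\check i^{s,r}_{n+1}\simeq\ast$ (consecutive maps of \textbf{Cof4}) one has $(\phi^s_r)^2\simeq\ast$, so $\alpha^s_r=\mathrm{id}+\phi^s_r$ is a self-homotopy equivalence and $\phi^s_r\circ\check i^{s,r}_{n+1}\simeq\ast$. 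On the subcomplex $\check i^r_P\colon P^{n+1}(2^r)\to C^{n+2,s}_r$ one has $\check q^r_C\circ\check i^r_P=\bar i^r_P$ and $\bar q^r_{n+1}\circ\bar i^r_P=q^r_{n+1}$, both read off from the defining homotopy-commutative squares of the \textbf{Cof. List}, after correcting $\bar q^r_{n+1}$ (or $\check q^r_C$) by a self-map of strictly lower Hopf filtration if necessary, exactly as in the proof of Lemma \ref{lem:order pi6(X)}(c) and diagram (\ref{diagram qrr'}). Granting these, $\phi^s_r\circ\check i^r_P=\check i^{s,r}_{n+1}q^r_{n+1}$, hence $\phi^s_r\circ\check i^r_P\tilde\eta_r=\check i^{s,r}_{n+1}\eta$ by (\ref{eq:eta}), so $\alpha^s_r(\check i^r_P\tilde\eta_r\eta)=\check i^r_P\tilde\eta_r\eta+\check i^{s,r}_{n+1}\eta^2$ while $\alpha^s_r(\check i^{s,r}_{n+1}\eta^2)=\check i^{s,r}_{n+1}\eta^2$; adding and using $2\eta^2=0$ gives $\alpha^s_r(\check i^{s,r}_{n+1}\eta^2+\check i^r_P\tilde\eta_r\eta)=\check i^r_P\tilde\eta_r\eta$.

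The hard part is exactly the bookkeeping in the third case: pinning down $\check q^r_C\circ\check i^r_P$ and $\bar q^r_{n+1}\circ\bar i^r_P$ on the nose inside the groups $[-,-]$ of the relevant $\an{2}$-complexes, where each is a priori determined only up to a correction term of lower Hopf filtration, and verifying that the standard adjustment of the pinch maps (which leaves the cofibre sequences unchanged) removes this ambiguity. Should one prefer to avoid the identity $\phi^2\simeq\ast$, an alternative valid in all three cases is to note that $\phi$ factors through a sphere in such a way that $\phi_\ast=0$ on integral homology; then $\alpha$ induces the identity on homology and is a homotopy equivalence by Whitehead's theorem, since all these complexes are simply connected.
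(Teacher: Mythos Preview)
Your proof is correct and follows essentially the same approach as the paper. You take $\alpha_r=\mathrm{id}+i_n^r\eta q_{n+1}^r$ and $\alpha^s=\mathrm{id}+i_{n+1}^s\eta q_{n+2}^s$ exactly as the paper does. For the third case the paper writes $\alpha^s_r=\mathrm{id}+\check{i}_{n+1}^{s,r}q_{n+1}^{r'}\bar{q}_{r'}^r\check{q}_C^r$ with $r'>r$, inserting the map $\bar{q}_{r'}^r$ from diagram~(\ref{diagram qrr'}) so that it can invoke the identity~(\ref{equ:qrr'}) already established in Lemma~\ref{lem:order pi6(X)}. But the right column of that diagram gives $q_{n+1}^{r'}\bar{q}_{r'}^r=\bar{q}_{n+1}^r$, so the paper's $\phi^s_r$ is literally your map $\check{i}_{n+1}^{s,r}\bar{q}_{n+1}^r\check{q}_C^r$; the factorization through $P^{n+1}(2^{r'})$ is only a convenience for quoting the earlier lemma. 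Your ``correction by lower Hopf filtration'' remark is exactly what the paper does when it replaces $\bar{q}_{r'}^r$ to make $\varepsilon=0$.

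One small caution: the sentence ``pre- and post-composition distribute over the relevant group operations'' is not literally true for arbitrary self-maps of a co-$H$-space, and the formula $(\mathrm{id}+\phi)(\mathrm{id}-\phi)=\mathrm{id}-\phi^2$ needs right-distributivity over $\phi$. This is harmless here because each $\phi$ factors through a sphere (so one can instead compute $(\mathrm{id}+\phi)\phi=((\mathrm{id}+\phi)i)\eta q=i\eta q=\phi$ using only left distributivity and $qi\simeq\ast$), and in any case you also supply the homology argument, which is the one the paper uses.
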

 \begin{proof}
	 Take $r'>r$ and let 
	 \begin{align*}
		 \alpha_r=id+i_{n}^r\eta q_{n+1}^r; \quad  \alpha^s=id+i_{n+1}^s\eta q_{n+2}^s;\quad  \alpha^s_r=id+\check{i}_{n+1}^{s,r}q_{n+1}^{r'}\bar{q}_{r'}^r\check{q}_{C}^{r},
	 \end{align*}
	 where  $\check{i}_{n+1}^{s,r}q_{n+1}^{r'}\bar{q}_{r'}^r\check{q}_{C}^{r}$ is the composite 
	 \begin{align*}
		 C_{r}^{n+2,s}\xra{\check{q}_{C}^{r}}C_{r}^{n+2}\xra{\bar{q}_{r'}^{r}}P^{n+1}(2^{r'})\xra{\bar{q}_{n+1}^{r'}}	S^{n+1}	\xra{\check{i}_{n+1}^{s,r}}C_{r}^{n+2,s}
	 \end{align*}
	 with $\bar{q}_{r'}^r$ defined by the diagram (\ref{diagram qrr'}).
	 Notice that $\alpha_r$, $\alpha^s$, $\alpha^s_r$ are self-homotopy equivalences,  since they induce isomorphisms of homology groups of simply-connected spaces.
	 
	 We only check  $\alpha^s_r(\check{i}_{n+1}^{s,r}\eta^2+\check{i}^r_P\tilde{\eta}_r\eta)=\check{i}^r_P\tilde{\eta}_r\eta$ here, since the other two cases are easier.
	 Note that $\check{i}_{n+1}^{s,r}q_{n+1}^{r'}\bar{q}_{r'}^r\check{q}_{C}^{r}\check{i}_{n+1}^{s,r}\eta^2=0$: $\check{q}_{C}^{r}\check{i}_{n+1}^{s,r}=0$ by the homotopy cofibre sequence for $C_{r}^{n+2,s}$ listed in \textbf{Cof. List}. Thus 
	 \begin{align*}
		\alpha^s_r(\check{i}_{n+1}^{s,r}\eta^2+\check{i}^r_P\tilde{\eta}_r\eta)= &(id+\check{i}_{n+1}^{s,r}q_{n+1}^{r'}\bar{q}_{r'}^r\check{q}_{C}^{r})(\check{i}_{n+1}^{s,r}\eta^2+\check{i}^r_P\tilde{\eta}_r\eta)\\
		 =&\check{i}_{n+1}^{s,r}\eta^2+\check{i}^r_P\tilde{\eta}_r\eta+\check{i}_{n+1}^{s,r}q_{n+1}^{r'}\bar{q}_{r'}^r\check{q}_{C}^{r}\check{i}_{n+1}^{s,r}\eta^2\\
		 &+\check{i}_{n+1}^{s,r}q_{n+1}^{r'}\bar{q}_{r'}^r\check{q}_{C}^{r}\check{i}^r_P\tilde{\eta}_r\eta\\
		 =&\check{i}_{n+1}^{s,r}\eta^2+\check{i}^r_P\tilde{\eta}_r\eta+\check{i}_{n+1}^{s,r}q_{n+1}^{r'}\bar{q}_{r'}^r\check{q}_{C}^{r}\check{i}^r_P\tilde{\eta}_r\eta\\
		 =&\check{i}_{n+1}^{s,r}\eta^2+\check{i}^r_P\tilde{\eta}_r\eta+\check{i}_{n+1}^{s,r}q_{n+1}^{r'}\tilde{\eta}_{r'}\eta,\qquad \text{ by }(\ref{equ:qrr'})\\
		 =&\check{i}_{n+1}^{s,r}\eta^2+\check{i}^r_P\tilde{\eta}_r\eta+\check{i}_{n+1}^{s,r}\eta^2\\
		 =&\check{i}^r_P\tilde{\eta}_r\eta.
	 \end{align*}
 \end{proof}

\begin{lemma}\label{order of 3 comp pi8} 
	Let $n\geq 5$ and denote 
	\[U_{(3)}=\{\alpha_{1}(n), i_{n}^{\eta}\alpha_{1}(n) \}\cup \{\hat{i}^{s}_n\alpha_{1}(n)~|~s\geq 1\}.\] 
	 The following relations hold:
	\begin{enumerate}[(1)]
		\item \label{a<a', a'<a}  for any $\alpha, \alpha'\in U_{(3)}$, $\alpha\prec\alpha'$ and $\alpha'\prec\alpha$;
		\item  \label{a<ira} 	for any $\alpha \in U_{(3)}$,  $\alpha\prec i_n^r\alpha_{1}(n)$.
		\item \label{ir'a<ira}  $i_n^{r'}\alpha_{1}(n)\prec i_n^r\alpha_{1}(n)$ ($r<r'$).
	\end{enumerate}
	\begin{proof}
		Clearly  we have  $\alpha_{1}(n)\prec i_{n}^{\eta}\alpha_{1}(n), \hat{i}^{s}_n\alpha_{1}(n)$. So for  (\ref{a<a', a'<a}) of this lemma, we only need to show $i_{n}^{\eta}\alpha_{1}(n), \hat{i}^{s}_n\alpha_{1}(n)\prec \alpha_{1}(n)$ which are obtained by 
		\begin{align}
			&(-\bar{\zeta}_{n})i_{n}^{\eta}\alpha_{1}(n)=-2\alpha_{1}(n)=\alpha_{1}(n); \nonumber\\ &(-\bar{\zeta}_{n}\hat{q}_{\eta})\hat{i}^{s}_n\alpha_{1}(n)=(-\bar{\zeta}_{n})i_{n}^{\eta}\alpha_{1}(n)=\alpha_{1}(n), \label{equ:zeta qeta}
		\end{align}
		where $\bar{\zeta}_{n}$ is given in Lemma \ref{Lem gen of Ceta} and $\hat{q}_{\eta}$ (defined by the \textbf{Cof. List}) satisfies the following homomotopy commutative diagram
		\begin{align*}
			\xymatrix{
				S^{n+1}\ar@{=}[d] \ar[rr]^-{\smatwo{2^s}{\eta}} && S^{n+1}\vee S^n\ar[rr]^{[\hat{i}^s_{n+1},\hat{i}^s_n]} \ar[rr]\ar[d]^-{p_2^{n}}& &C^{n+2,s}\ar[d]^-{\hat{q}_{\eta}} \\
				S^{n+1}\ar[rr]^-{\eta} &&S^{n} \ar[rr]^{i_n^{\eta}} && C_{\eta}^{n+2}
			}.
		\end{align*}
		The statement (\ref{a<ira}) of the Lemma follows by (\ref{a<a', a'<a}) and the obvious relation $\alpha_{1}(n)\prec i_n^r\alpha_{1}(n)$.
		
		For (\ref{ir'a<ira}), the map $B(\chi^{r'}_r)$ for $p=3$ in (\ref{Moore1-PP-odd}) of Lemma \ref{lem:Moore1} satisfies the following homotopy commutative diagram
			\begin{align*}
			\xymatrix{
				S^{n}\ar[d]_-{3^{r'-r}} \ar[r]^-{3^{r'}} &  S^n\ar[r]^-{i_n^{r'}} \ar[r]\ar@{=}[d] &P^{n+1}(3^{r'})\ar[d]^-{B(\chi^{r'}_r)} \\
				S^{n}\ar[r]^-{3^r} &S^{n} \ar[r]^-{i_n^{r}} &P^{n+1}(3^{r}).
			}
		\end{align*}
	 Thus the proof of (\ref{ir'a<ira}) is finished by $B(\chi^{r'}_r)i_n^{r'}\alpha_{1}(n)=i_n^r\alpha_{1}(n)$.
	\end{proof}
\end{lemma}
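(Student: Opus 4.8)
The plan is to reduce all three relations to two elementary inputs: the fact that $\alpha_{1}(n)$ has order $3$, so that $2\alpha_{1}(n)=-\alpha_{1}(n)$ in $\pi_{n+3}(S^{n};3)\cong\Z/3$, and the auxiliary maps already at hand, namely $\bar{\zeta}_{n}\colon C^{n+2}_{\eta}\to S^{n}$ from Lemma~\ref{Lem gen of Ceta} together with the inclusion and collapse maps recorded in the \textbf{Cof. List} and in Lemma~\ref{lem:Moore1}. The only substantive point in (1) is the ``backward'' direction $i_{n}^{\eta}\alpha_{1}(n)\prec\alpha_{1}(n)$ and $\hat{i}^{s}_{n}\alpha_{1}(n)\prec\alpha_{1}(n)$; once these are established, (2) and (3) follow quickly by transitivity of $\prec$ and a single comparison of Moore-space cofibrations.

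First I would dispose of the ``forward'' relations of (1): $\alpha_{1}(n)\prec i_{n}^{\eta}\alpha_{1}(n)$ and $\alpha_{1}(n)\prec\hat{i}^{s}_{n}\alpha_{1}(n)$ hold by definition, being the images of $\alpha_{1}(n)$ under the canonical inclusions $i_{n}^{\eta}\colon S^{n}\to C^{n+2}_{\eta}$ and $\hat{i}^{s}_{n}\colon S^{n}\to C^{n+2,s}$. For the reverse direction, post-compose with $-\bar{\zeta}_{n}$: since $\bar{\zeta}_{n}i_{n}^{\eta}=2\cdot 1_{n}$ by Lemma~\ref{Lem gen of Ceta}, one gets $(-\bar{\zeta}_{n})\circ i_{n}^{\eta}\alpha_{1}(n)=-2\alpha_{1}(n)=\alpha_{1}(n)$, so $i_{n}^{\eta}\alpha_{1}(n)\prec\alpha_{1}(n)$. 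For $C^{n+2,s}$ I would first read off from the defining homotopy commutative square of the collapse map $\hat{q}_{\eta}\colon C^{n+2,s}\to C^{n+2}_{\eta}$ in the \textbf{Cof. List} that $\hat{q}_{\eta}\hat{i}^{s}_{n}=i_{n}^{\eta}$, and then $(-\bar{\zeta}_{n}\hat{q}_{\eta})\circ\hat{i}^{s}_{n}\alpha_{1}(n)=(-\bar{\zeta}_{n})\circ i_{n}^{\eta}\alpha_{1}(n)=\alpha_{1}(n)$, giving $\hat{i}^{s}_{n}\alpha_{1}(n)\prec\alpha_{1}(n)$. Transitivity of $\prec$ then delivers $\alpha\prec\alpha'$ and $\alpha'\prec\alpha$ for all $\alpha,\alpha'\in U_{(3)}$.

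For (2) it is enough to observe $\alpha_{1}(n)\prec i_{n}^{r}\alpha_{1}(n)$, since $i_{n}^{r}\alpha_{1}(n)$ is $\alpha_{1}(n)$ followed by the inclusion $S^{n}\to P^{n+1}(3^{r})$; combined with (1) this gives $\alpha\prec i_{n}^{r}\alpha_{1}(n)$ for every $\alpha\in U_{(3)}$. For (3), I would use the self-map $B(\chi^{r'}_{r})$ of Moore spaces from (\ref{Moore1-PP-odd}) of Lemma~\ref{lem:Moore1} (odd prime $p=3$, $r<r'$). It is induced on homotopy cofibres by the square comparing the degree maps $3^{r'}$ and $3^{r}\circ 3^{r'-r}$ on $S^{n}$, and this forces $B(\chi^{r'}_{r})i_{n}^{r'}=i_{n}^{r}$; post-composing with $\alpha_{1}(n)$ yields $B(\chi^{r'}_{r})\circ i_{n}^{r'}\alpha_{1}(n)=i_{n}^{r}\alpha_{1}(n)$, that is, $i_{n}^{r'}\alpha_{1}(n)\prec i_{n}^{r}\alpha_{1}(n)$.

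The only point requiring care is the reverse direction of (1): there is no retraction of $S^{n}$ off $C^{n+2}_{\eta}$, not even after $3$-localization, so one genuinely needs the ``degree-$2$ half-retraction'' $\bar{\zeta}_{n}$, and the whole argument works precisely because $2$ is invertible modulo $3$. Everything else is a direct chase through already-established cofibre sequences, so I anticipate no further obstacle.
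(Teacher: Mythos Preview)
Your argument is correct and mirrors the paper's proof essentially verbatim: both use $-\bar{\zeta}_{n}$ (with $\bar{\zeta}_{n}i_{n}^{\eta}=2$) and the factorization $\hat{q}_{\eta}\hat{i}^{s}_{n}=i_{n}^{\eta}$ to handle the nontrivial direction of (1), then derive (2) by transitivity from $\alpha_{1}(n)\prec i_{n}^{r}\alpha_{1}(n)$ and (3) from $B(\chi^{r'}_{r})i_{n}^{r'}=i_{n}^{r}$. Your closing remark about needing the degree-$2$ half-retraction because $2$ is a unit mod $3$ is exactly the point.
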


For maps $\alpha\colon S^{n+3}\to  X$, $\beta\colon S^{n+3}\to  Y$, 
relations $\alpha\prec\beta $ of generators of 2-primary component and 3-primary component of the homotopy groups repectively given in Lemma \ref{lem:order pi6(X)} and Lemma \ref{order of 3 comp pi8} means that we can use self-homotopy  equivalence of $X\vee Y$ to eliminate $\beta$ by $\alpha$.

\begin{lemma}\label{lem:total order 2, 3}
	The generators of  $\pi_{n+3}(S^{n})$,  $\pi_{n+3}(C^{n+2,s})$ and $\pi_{n+3}(C^{n+2,s'})$ ($s'>s$) satisfy the following relations:
\begin{enumerate}
	\item $\alpha_1(n)+\eta^3\prec \alpha_1(n)$, $\alpha_1(n)+\eta^3\prec \eta^3 $, $\alpha_1(n)+\eta^3\prec\hat{i}^{s}_{n}\alpha_1(n)$;
	\item $\hat{i}^{s}_{n}\alpha_1(n)+\hat{i}^{s}_{n+1}\eta^2\prec\alpha_1(n)$, $\hat{i}^{s}_{n}\alpha_1(n)+\hat{i}^{s}_{n+1}\eta^2\prec\eta^3$;
	\item  $\hat{i}^{s}_{n}\alpha_1(n)+\hat{i}^{s}_{n+1}\eta^2\prec \hat{i}^{s}_{n}\alpha_1(n)$,  $\hat{i}^{s}_{n}\alpha_1(n)+\hat{i}^{s}_{n+1}\eta^2\prec \hat{i}^{s}_{n+1}\eta^2$;
	\item $\hat{i}^{s'}_{n}\alpha_1(n)+\hat{i}^{s'}_{n+1}\eta^2\prec \hat{i}^{s}_{n}\alpha_1(n)$, $\hat{i}^{s'}_{n}\alpha_1(n)+\hat{i}^{s'}_{n+1}\eta^2\prec \hat{i}^{s}_{n+1}\eta^2$;
	\item $\hat{i}^{s}_{n}\alpha_1(n)+\hat{i}^{s}_{n+1}\eta^2\prec\hat{i}^{s'}_{n}\alpha_1(n)$.
\end{enumerate}

\begin{proof}
$(1)$ The relations in (1) are deduced from the following equalities indicated by $\hat{i}^{s}_{n}\eta^3=0\in \pi_{n+3}(C^{n+2,s})$:
	\begin{align*}
		&(-2)(\alpha_1(n)+\eta^3)=\alpha_1(n); \quad 3(\alpha_1(n)+\eta^3)=\eta^3;\quad  \hat{i}^{s}_{n}(\alpha_1(n)+\eta^3)=\hat{i}^{s}_{n}\alpha_1(n).
	\end{align*}

 $(2)$ The first relation of (2) is obtained by the equation (see (\ref{equ:zeta qeta}))
		\begin{align*}
		&(-\bar{\zeta}_{n}\hat{q}_{\eta})(\hat{i}^{s}_{n}\alpha_1(n)+\hat{i}^{s}_{n+1}\eta^2)=(-\bar{\zeta}_{n})i_{n}^{\eta}(\alpha_1(n)+0)=\alpha_1(n).
	\end{align*}
	From the exact sequence 
	\begin{align*}
		[C^{n+2,s}, S^n]\xra{[\hat{i}^{s}_{n},\hat{i}^s_{n+1}]^\ast} [S^{n}\vee S^{n+1}, S^n]\xra{\smatwo{\eta}{2^s}^{\ast}}  [S^{n+1}, S^n],
	\end{align*}
we see that there is a map $\mu^s\in [C^{n+2,s}, S^n]$ satisfying \[\mu^s[\hat{i}^{s}_{n},\hat{i}^{s}_{n+1}]=[0,\eta].\]
  Then second relation of (2) follows by the equalities 
	\begin{align*}
		&\mu^s(\hat{i}^{s}_{n}\alpha_1(n)+\hat{i}^{s}_{n+1}\eta^2)=\mu^s[\hat{i}^{s}_{n},\hat{i}^s_{n+1}]\smatwo{\alpha_1(n)}{\eta}=[0,\eta] \smatwo{\alpha_1(n)}{\eta}=\eta^3.
	\end{align*}

	$(3)$ The relations in $(3)$ can be obtained by the two equalities 
	\begin{align*}
	&(-2)(\hat{i}^{s}_{n}\alpha_1(n)+\hat{i}^{s}_{n+1}\eta^2)=\hat{i}^{s}_{n}\alpha_1(n); \quad 3(\hat{i}^{s}_{n}\alpha_1(n)+\hat{i}^{s}_{n+1}\eta^2)=\hat{i}^{s}_{n+1}\eta^2.
\end{align*}

$(4)$ There are maps $\mu_{s}^{s'}, \lambda_{s}^{s'}\in [C^{n+2,s'}, C^{n+2,s}]$ respectively satisfying  the following homotopy commutative diagrams 
	\[
	\xymatrix{
		S^{n+1}\ar[d]^-{-2^{s'+1-s}} \ar[rr]^-{\smatwo{2^{s'}}{\eta}} && S^{n+1}\vee S^n\ar[rr]^{[\hat{i}^{s'}_{n+1},\hat{i}^{s'}_n]} \ar[d]^-{-2id} &&C^{n+2,s'}\ar[d]^-{\mu_{s}^{s'}} \\
		S^{n+1}\ar[rr]^-{\smatwo{2^{s}}{\eta}}  &&S^{n+1}\vee S^n\ar[rr]^{[\hat{i}^{s}_{n+1},\hat{i}^{s}_n]} && C^{n+2,s}
	};\]
\[\xymatrix{
	S^{n+1}\ar[d]^-{-2^{s'-s}} \ar[rr]^-{\smatwo{2^{s'}}{\eta}} && S^{n+1}\vee S^n\ar[rr]^{[\hat{i}^{s'}_{n+1},\hat{i}^{s'}_n]} \ar[d]^-{id\vee 0} &&C^{n+2,s'}\ar[d]^-{\lambda_{s}^{s'}} \\
	S^{n+1}\ar[rr]^-{\smatwo{2^{s}}{\eta}}  &&S^{n+1}\vee S^n\ar[rr]^{[\hat{i}^{s}_{n+1},\hat{i}^{s}_n]} && C^{n+2,s}
	}.\]
Then the proof of (4) is finished by the following equalities  
\begin{align*}
	\mu_{s}^{s'}(\hat{i}^{s'}_{n}\alpha_1(n)+\hat{i}^{s'}_{n+1}\eta^2)=\hat{i}^{s}_{n}\alpha_1(n); \quad  	\lambda_{s}^{s'}(\hat{i}^{s'}_{n}\alpha_1(n)+\hat{i}^{s'}_{n+1}\eta^2)=\hat{i}^{s}_{n+1}\eta^2.
\end{align*} 

$(5)$ There is a map $\theta_{s'}^{s}\in  [C^{n+2,s}, C^{n+2,s'}]$ satisfying the following homotopy commutative diagram
\begin{align*}
	\xymatrix{
		S^{n+1}\ar@{=}[d] \ar[rr]^-{\smatwo{2^{s}}{\eta}} && S^{n+1}\vee S^n\ar[rr]^{[\hat{i}^{s}_{n+1},\hat{i}^{s}_n]} \ar[d]^-{2^{s'-s}\vee id} &&C^{n+2,s}\ar[d]^-{\theta_{s'}^{s}} \\
		S^{n+1}\ar[rr]^-{\smatwo{2^{s'}}{\eta}}  &&S^{n+1}\vee S^n\ar[rr]^{[\hat{i}^{s'}_{n+1},\hat{i}^{s'}_n]} && C^{n+2,s'}
	}.
\end{align*}
Then the proof of (5) is finished by the equalities
\begin{align*}
	\theta_{s'}^{s}(\hat{i}^{s}_{n+1}\eta^2+\hat{i}^{s}_{n}\alpha_1(n))&=[\hat{i}^{s'}_{n+1},\hat{i}^{s'}_n](2^{s'-s}\vee id)\smatwo{\eta^2}{\alpha_1(n)} \\
	&=[\hat{i}^{s'}_{n+1}2^{s'-s},\hat{i}^{s'}_n]\smatwo{\eta^2}{\alpha_1(n)}\\
	&=\hat{i}^{s'}_{n}\alpha_1(n).
\end{align*}
\end{proof}
\end{lemma}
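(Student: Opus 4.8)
The plan is to establish each relation $\alpha\prec\beta$ by exhibiting a single map $f$ with $f\alpha=\beta$, and it is convenient to split the thirteen assertions into a ``diagonal'' group and an ``off-diagonal'' group. For the diagonal relations, where $\beta$ lands in the same target as $\alpha$ (or in $C^{n+2,s}$ through its bottom cell), I would use only the additive structure of $\pi_{n+3}$ together with the fact that $\alpha_1(n)$ and $\hat{i}^s_n\alpha_1(n)$ have order $3$ while $\eta^3$ and $\hat{i}^s_{n+1}\eta^2$ have order $2$. Then $-2\cdot 1$ kills the $2$-torsion summand and fixes the $3$-torsion summand, while $3\cdot 1$ does the opposite; this gives at once $\alpha_1(n)+\eta^3\prec\alpha_1(n)$, $\alpha_1(n)+\eta^3\prec\eta^3$, and the two relations of item $(3)$. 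For the third relation of item $(1)$ I would apply the inclusion $\hat{i}^s_n\colon S^n\to C^{n+2,s}$ and use that $\hat{i}^s_n\eta^3=0$ in $\pi_{n+3}(C^{n+2,s})$, which is immediate from Lemma \ref{lemma pi(Changcplex)} because $\hat{i}^s_n\nu$ has order $12$ and $\eta^3=12\nu$.

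For item $(2)$ the two relations need different maps. For $\hat{i}^s_n\alpha_1(n)+\hat{i}^s_{n+1}\eta^2\prec\alpha_1(n)$ I would compose the map $\hat{q}_\eta\colon C^{n+2,s}\to C^{n+2}_\eta$ from the \textbf{Cof. List} (which satisfies $\hat{q}_\eta\hat{i}^s_n=i^\eta_n$ and $\hat{q}_\eta\hat{i}^s_{n+1}=0$) with $\bar\zeta_n\colon C^{n+2}_\eta\to S^n$ from Lemma \ref{Lem gen of Ceta} (which satisfies $\bar\zeta_n i^\eta_n=2\cdot 1_n$), so that $-\bar\zeta_n\hat{q}_\eta$ sends $\hat{i}^s_n\alpha_1(n)+\hat{i}^s_{n+1}\eta^2$ to $-2\alpha_1(n)=\alpha_1(n)$. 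For $\hat{i}^s_n\alpha_1(n)+\hat{i}^s_{n+1}\eta^2\prec\eta^3$ I would construct $\mu^s\colon C^{n+2,s}\to S^n$ with $\mu^s\hat{i}^s_n=0$ and $\mu^s\hat{i}^s_{n+1}=\eta$; such a $\mu^s$ exists because in the exact sequence $[S^{n+2},S^n]\to[C^{n+2,s},S^n]\to[S^{n+1}\vee S^n,S^n]\to[S^{n+1},S^n]$ coming from the cofibre sequence \textbf{Cof1} for $C^{n+2,s}$, the pair $(\eta,0)$ maps to $2^s\eta=0$ and hence lifts. Then $\mu^s(\hat{i}^s_n\alpha_1(n)+\hat{i}^s_{n+1}\eta^2)=\eta\circ\eta^2=\eta^3$.

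Items $(4)$ and $(5)$ concern two Chang complexes $C^{n+2,s}$ and $C^{n+2,s'}$ with $s'>s$, and here the plan is to manufacture maps between them from homotopy-commutative ladders of the cofibre presentations $S^{n+1}\to S^{n+1}\vee S^n\to C^{n+2,s}\to S^{n+2}$ (with first map $\smatwo{2^s}{\eta}$), choosing the vertical maps on the $n$- and $(n+1)$-cells so that the left-hand square commutes. Taking $-2\cdot\mathrm{id}$ on $S^{n+1}\vee S^n$ produces $\mu^{s'}_s\colon C^{n+2,s'}\to C^{n+2,s}$ with $\mu^{s'}_s\hat{i}^{s'}_n=-2\hat{i}^s_n$ and $\mu^{s'}_s\hat{i}^{s'}_{n+1}=-2\hat{i}^s_{n+1}$, which gives the first relation of $(4)$ (again by the orders $3$ and $2$); taking $\mathrm{id}\vee 0$ produces $\lambda^{s'}_s$ with $\lambda^{s'}_s\hat{i}^{s'}_n=0$ and $\lambda^{s'}_s\hat{i}^{s'}_{n+1}=\pm\hat{i}^s_{n+1}$, giving the second relation of $(4)$; and taking $2^{s'-s}\vee\mathrm{id}$ produces $\theta^s_{s'}\colon C^{n+2,s}\to C^{n+2,s'}$ with $\theta^s_{s'}\hat{i}^s_n=\hat{i}^{s'}_n$ and $\theta^s_{s'}\hat{i}^s_{n+1}=2^{s'-s}\hat{i}^{s'}_{n+1}$, so that $\theta^s_{s'}(\hat{i}^s_n\alpha_1(n)+\hat{i}^s_{n+1}\eta^2)=\hat{i}^{s'}_n\alpha_1(n)$ because $2^{s'-s}\hat{i}^{s'}_{n+1}\eta^2=0$; this is item $(5)$. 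I expect the main obstacle to be the bookkeeping: one must check that each chosen pair of vertical maps really does make the relevant square homotopy-commute, so that the filler map genuinely exists, and then extract its effect on the bottom cells precisely; because of $2$-torsion several of these squares commute only after absorbing even multiples of $\eta$, which is harmless but must be tracked carefully. The one step that is not pure diagram chasing is the existence of $\mu^s$ via the exact sequence above, but it is short given the \textbf{Cof. List}.
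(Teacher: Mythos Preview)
Your proposal is correct and follows essentially the same approach as the paper's proof: the order arguments with $-2$ and $3$ for items $(1)$ and $(3)$, the composite $-\bar\zeta_n\hat q_\eta$ and the lifted map $\mu^s$ via the cofibre exact sequence for item $(2)$, and the ladder constructions with $-2\cdot\mathrm{id}$, $\mathrm{id}\vee 0$, and $2^{s'-s}\vee\mathrm{id}$ for items $(4)$ and $(5)$ are exactly the maps the paper uses. Your caution about the left squares commuting only modulo even multiples of $\eta$ is well placed and matches what the paper implicitly relies on.
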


\subsection{Proofs of Theorem \ref{thm:6mfds 2-local} and \ref{thm:6mfds total}}\label{sect:proofs}
We now give the proofs of the main theorems. 

\begin{proof}[Proof of Theorem \ref{thm:6mfds 2-local}]
Denote 
\[V''_7=\Sigma V''_6/(\bigvee_{j=1}^{m_3}P^{6}(3^{r'_j})\vee kC_{\eta}^7).\]
Localized at $2$, by (\ref{eq:S2M'}), (\ref{equ:Sigma g7}) and \cite[Lemma 6.4]{HL}, there is a homotopy equivalence 
\begin{equation}\label{eq:S3M':2-local}
	\Sigma^3 M'\simeq_{(2)} kC_{\eta}^7\vee \bm C_{g_{(2)}}
\end{equation}
for some map $g_{(2)}\colon S^{8}\to V''_7$ given by the equation
\begin{equation}\label{equ:g(2)}
	\begin{aligned}
g_{(2)}	=&\sum_{j=1}^{l-k-t_3} c^S_j\cdot \eta^3+\sum_{j=1}^{l-k-t_2} d^S_j\cdot \eta+\sum_{j=1}^{t_0}\big(x_j\cdot \tilde{\eta}_{s_j}+y_j\cdot i_{6}^{s_j}\eta^2\big) \\
&+\sum_{j=1}^{t_1}\big(a_j\cdot \tilde{\eta}_{r_j}\eta+c_j\cdot i_{5}^{r_j}\eta^{3}\big)+\sum_{j=1}^{t_2}\big(\bar{a}_j\cdot \bar{i}_{P}^{\bar{r}_j}\tilde{\eta}_{\bar{r}_j}\eta\big) \\
&+\sum_{j=1}^{t_3}\big(\hat{y}_j\cdot \hat{i}^{\hat{s}_j}_6\eta^2\big)+\sum_{j=1}^{t_4}\big(\check{a}_j\cdot \check{i}_{P}^{\check{r}_j}\tilde{\eta}_{\check{r}_j}\eta+\check{y}_j\cdot\check{i}^{\check{s}_j,\check{r}_j}_6\eta^2\big). 
	\end{aligned}
\end{equation}
We sort the coefficients in (\ref{equ:g(2)}) by setting
\begin{align*}
	N_1&=\{d^S_j~ |~ 1 \leq j\leq 1-k-t_2\}\cup \{x_j~ |~ 1 \leq j\leq t_0\},\\
	N_2&=\{y_j~ |~ 1 \leq j\leq t_0\}\cup \{a_j~ |~ 1 \leq j\leq t_1\}\cup \{\bar{a}_j~ |~ 1 \leq j\leq t_2\}\cup N_2^2,\\
	&N_2^2= \{\hat{y}_j~ |~ 1 \leq j\leq t_3\}\cup  \{\check{a}_j, \check{y}_j~ |~ 1 \leq j\leq t_4\},\\
	N_3&=\{	c^S_j~ |~ 1 \leq j\leq 1-k-t_3\}\cup  \{c_j~ |~ 1 \leq j\leq t_1\}. 
\end{align*}
Note that the corresponding generators related to elements of $N_1,N_2$ and $N_3$ are detected by the reduced Steenrod square $\Sq^2$, the second operation $\Theta$ and the triple operation $\mathbb{T}$, respectively.
We discuss the homotopy decompositions of the homotopy cofibre $\bm C_{g_{(2)}}$ as follows.

(1) Suppose that the operations $\Sq^2$,  $\Theta$ and  $\mathbb{T}$ act trivially on  $H^4(M;\z{})$, $H^3(M;\z{})$  and  $H^2(M;\z{})$ respectively, then all coefficients in (\ref{equ:g(2)}) are zero, which implies that 
$ \bm C_{g_{(2)}}\simeq S^9\vee V''_7$.

(2)  Suppose that $\Sq^2$ acts trivially on $H^4(M;\z{})$,  then all coefficients in $N_1$ are zero. Moreover, $\Theta$ acts non-trivially on $H^3(M;\z{})$, we get at least one of the coefficients in  $N_2$ equals $1$. By  Lemma \ref{lem:order pi6(X)} and Lemma \ref{lem:reduce plus}, after applying some self-homotopy equivalence of $V''_7$,  we can assume that  there is exactly one such coefficient in $N_2$; i.e.,  $y_{j_0}=1$ or $a_{j_0}=1$ or $\bar{a}_{j_0}=1$ or $\hat{y}_{j_0}=1$ or $\check{a}_{j_0}=1$ or $\check{y}_{j_0}=1$ for some $j_0$. In any cases we have that all coefficients in $N_3$ are zero. Thus
 the homotopy type of $\bm C_{g_{(2)}}$ is one of the following 
\begin{align*}
\big(V''_7/P^{7}(2^{s_{j_0}})\big)\vee \big(P^{7}(2^{s_{j_0}})\cup_{i_6^{s_{j_0}}\eta^2}e^9\big),\quad &\big(V''_7/P^{6}(2^{r_{j_0}})\big)\vee \big(P^{6}(2^{r_{j_0}})\cup_{\tilde{\eta}_{r_{j_0}}\eta}e^9\big),\\[1ex]
\big(V''_7/C^{7}_{\bar{r}_{j_0}}\big)\vee \big(C^{7}_{\bar{r}_{j_0}}\cup_{\bar{i}_P^{\bar{r}_{j_0}}\tilde{\eta}_{\bar{r}_{j_0}}\eta}e^9\big),\quad &\big(V''_7/C^{7,\hat{s}_{j_0}}\big)\vee \big(C^{7,\hat{s}_{j_0}}\cup_{\hat{i}^{\hat{s}_{j_0}}_6\eta^2}e^9\big),\\[1ex]
\big(V''_7/C^{7,\check{s}_{j_0}}_{\check{r}_{j_0}}\big)\vee \big(C^{7,\check{s}_{j_0}}_{\check{r}_{j_0}}\cup_{\check{i}_P^{\check{r}_{j_0}}\tilde{\eta}_{\check{r}_{j_0}}\eta}e^9\big),\quad &\big(V''_7/C^{7,\check{s}_{j_0}}_{\check{r}_{j_0}}\big)\vee \big(C^{7,\check{s}_{j_0}}_{\check{r}_{j_0}}\cup_{\check{i}^{\check{s}_{j_0},\check{r}_{j_0}}_6\eta^2}e^9\big).
\end{align*}
	
(3) Suppose that $\Sq^2$ and  $\Theta$  act trivially on $H^4(M;\z{})$ and $H^3(M;\z{})$, respectively, then all coefficients in $N_1\cup N_2$ are zero.  Since $\mathbb{T}$ acts non-trivially on $H^2(M;\z{})$,  at least one of the coefficients in $N_3$ equals $1$. By  Lemma \ref{lem:order pi6(X)}, after applying some self-homotopy equivalence of $V''_7$,  we can assume that there is exactly one such coefficient in $N_3$; i.e.,  $c^S_{j_0}=1$ or $c_{j_0}=1$ for some $j_0$ and all other coefficients in $N_3$ are zero. Thus we have 
	 \begin{align*}
		&\bm C_{g_{(2)}}\simeq \big(V''_7/S^5\big)\vee \big(S^5\cup_{\eta^3}e^9\big),\\
\text{ or }\quad &\bm C_{g_{(2)}}\simeq \big(V''_7/P^{6}(2^{r_{j_0}})\big)\vee \big(P^{6}(2^{r_{j_0}})\cup_{i_5^{r_{j_0}}\eta^3}e^9\big)~ (r_{j_0}\geq 3).
	 \end{align*}	

(4)	 Suppose that $\Sq^2$ acts non-trivially on $H^4(M;\z{})$,   then at least one of the coefficients in  $N_1$ must be $1$.
	By  Lemma \ref{lem:order pi6(X)} and Lemma \ref{lem:reduce plus}, after applying some self-homotopy equivalence of $V''_7$,  we can assume that there is exactly one such coefficient in $N_1$; i.e.,  $x_{j_0}=1$ or $d^S_{j_0}=1$ for some $j_0$ and all other coefficients in (\ref{equ:g(2)}) are zero. It follows that  
	\[\bm C_{g_{(2)}}\simeq \big(V''_7/P^{7}(2^{s_{j_0}})\big)\vee \big(P^{7}(2^{s_{j_0}})\cup_{\tilde{\eta}_{s_{j_0}}}e^9\big)~~\text{ or }~~\big(V''_7/S^7\big)\vee C_{\eta}^9.\]

Now the proof of Theorem \ref{thm:6mfds 2-local} is finished by combining (\ref{equ:M=M'vee dS4}) and (\ref{eq:S3M':2-local}).
\end{proof}


Lemma \ref{lem:total order 2, 3}  shows that we can apply appropriate self-homotopy equivalences to eliminate some generators of the $2$-primary components of the groups given by Lemma \ref{lem:order pi6(X)} without affecting the $3$-primary component generators in Lemma \ref{order of 3 comp pi8}, and vice versa. Thus 
we can combine the $2$-local and $3$-local homotopy types of $\Sigma^3 M$ to get its total homotopy types. 


\begin{theorem}[cf. Theorem 1.1 of \cite{CS22}]\label{thm:3-local}
	Let $M$ be a closed oriented simply-connected $6$-manifold with $H_\ast(M)$ given by (\ref{HM}). 
	\begin{enumerate}[1.]
	   \item\label{3-local:1} If $\PP\colon H^{3}(\Sigma M;\Z/3)\to H^{7}(\Sigma M;\Z/3)$ is trivial, then there is a homotopy equivalence
   \[\Sigma^3 M\simeq_{(3)}dS^6\vee P^{7}(T_{3})\vee  P^{6}(T_{3})\vee l(S^5\vee S^7)\vee S^9.\]
   \item\label{3-local:2} If for any cohomology classes $u,v\in H^3(\Sigma M;\Z/3)$ with $\PP(u)\neq 0$, there holds $\beta_r(u+v)=0$ for any $r\geq 1$, then there is a homotopy equivalence 
	\[\Sigma^3 M\simeq_{(3)}dS^6\vee P^{7}(T_{3})\vee  P^{6}(T_{3})\vee (l-1)S^5\vee lS^7\vee (S^5\cup_{\alpha_1(5)}e^9).\]
   
	\item\label{3-local:3} If there exist cohomology classes $u,v\in H^3(\Sigma M;\Z/3)$ such that $\PP(u)\neq 0$ and $\beta_{r_{j_0}}(u+v)\neq 0$ for some $r_{j_0}$, then there is a homotopy equivalence 
	\[ \Sigma^3 M\simeq_{(3)}  dS^6\vee P^{7}(T_{3})\vee  P^{6}\big(\frac{T_{3}}{\Z/3^{r_{j_0}}}\big)\vee l(S^5\vee S^7)\vee \big(P^{6}(3^{r_{j_0}})\cup_{i_{5}^{r_{j_0}}\alpha_1(5)}e^9\big).\]
	\end{enumerate}
   \end{theorem}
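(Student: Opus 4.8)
The plan is to follow the same strategy used for Theorem \ref{thm:6mfds 2-local}, but now working $3$-locally and tracking only the $3$-primary information. First I would localize the decomposition (\ref{eq:S2M'})--(\ref{equ:Sigma g7}) at $3$. Localized at $3$, the $2$-primary generators $\eta$, $\eta^2$, $\eta^3$, $\tilde\eta_s$, $\tilde\eta_r\eta$ and all the Moore/Chang summands of $2$-power order contribute nothing to $\pi_8$, so after suspending three times the attaching map $\Sigma h''$ collapses to a map $g_{(3)}\colon S^8\to V_{(3)}$ whose only possibly non-zero components land in the summands $S^5$, $C^7_\eta$, $C^{7,\hat s_j}$ and $P^6(3^{r'_j})$, with values in the relevant copies of $\Z/3$ generated by $\alpha_1(5)$, $i^\eta_5\alpha_1(5)$, $\hat i^{\hat s_j}_5\alpha_1(5)$ and $i^{r'_j}_5\alpha_1(5)$ respectively; here I use Lemma \ref{lemma pi(Changcplex)} (parts 5 and 7) and Lemma \ref{lem:pi7(P(pr))}(iv). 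The $3$-primary part of the top-cell attaching map of $\Sigma M'$ projecting to each $P^4(3^{r'_j})$ is $i^{r'_j}_3\alpha_1(3)$ by \cite[Lemma 3.3]{CS22}, so after three suspensions the $P^6(3^{r'_j})$-component of $g_{(3)}$ is precisely $i^{r'_j}_5\alpha_1(5)$ (non-zero), while all the sphere and Chang-complex components are, a priori, arbitrary multiples $a$ of $\alpha_1(5)$, $i^\eta_5\alpha_1(5)$, $\hat i^{\hat s_j}_5\alpha_1(5)$.

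Next I would reduce $g_{(3)}$ using self-homotopy equivalences of $V_{(3)}$, invoking Lemma \ref{order of 3 comp pi8}. By part (1) of that lemma, any two of $\alpha_1(n)$, $i^\eta_n\alpha_1(n)$, $\hat i^s_n\alpha_1(n)$ dominate each other, so whenever at least one of the sphere- or $C_\eta$- or $C^{\hat s}$-components is non-zero we may, after a change of basis, concentrate it in a single such summand (choosing $S^5$ if an $S^5$ is available, etc.). By part (2), any such generator is dominated by $i^r_n\alpha_1(n)$, and by part (3) the generators $i^{r'}_n\alpha_1(n)$ form a chain under $\prec$ with larger $r'$ dominated by smaller. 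These domination relations, together with the translation "$\alpha\prec\beta \Rightarrow \smatwo{\alpha}{\beta}\sim\smatwo{\alpha}{0}$" recorded before Lemma \ref{lem:order pi6(X)}, are exactly what is needed to eliminate all but one $\alpha_1$-type component. The trichotomy in the statement then corresponds to: (i) all $3$-primary coefficients vanish — this is $\PP = 0$ on $H^3(\Sigma M;\Z/3)$, using Lemma \ref{lem:p-odd:n=3:P1} (and its converse, that $\PP$ detects $\alpha_1$) to identify the cohomological condition with the vanishing; (ii) the only surviving $\alpha_1$ sits on an $S^5$-wedge summand (equivalently, on a wedge summand carrying no Bockstein $\beta_r$ linking to it), giving the summand $S^5\cup_{\alpha_1(5)}e^9$ and absorbing the remaining $i^{r'_j}_5\alpha_1(5)$ components into it via Lemma \ref{order of 3 comp pi8}(2); (iii) the surviving $\alpha_1$ is forced onto some $P^6(3^{r'_{j_0}})$, which happens precisely when the hypothesis $\beta_{r_{j_0}}(u+v)\neq 0$ holds, by the obstruction-theoretic / attaching-map argument of \cite[Theorem 1.1]{CS22}, giving the summand $P^6(3^{r'_{j_0}})\cup_{i^{r'_{j_0}}_5\alpha_1(5)}e^9$ while the other copies of $P^6(3^{r'_j})$ peel off using part (3).

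Finally I would assemble the pieces: combine the reduced $g_{(3)}$ with the stable splitting $\Sigma M\simeq \Sigma M'\vee dS^4$ of (\ref{equ:M=M'vee dS4}), apply \cite[Lemma 6.4]{HL} (or the elementary fact that a wedge summand of $V$ on which a map $S^8\to V$ is null splits off) to pull the $S^9$ and the untouched wedge summands out, and read off the three homotopy types, matching the torsion bookkeeping $P^6(T_3)$, $P^7(T_3)$, $P^6(T_3/\Z/3^{r_{j_0}})$. I expect the main obstacle to be Step two: establishing that the three cohomological conditions in the statement really are mutually exclusive and exhaustive, and in particular that in case (iii) the non-trivial Bockstein genuinely forces the $\alpha_1$-component onto the Moore summand $P^6(3^{r'_{j_0}})$ rather than allowing it to be moved onto an $S^5$; this is where the interaction between $\beta_r$ and $\PP$ on $H^*(\Sigma M;\Z/3)$ must be analyzed carefully, following the template of \cite{CS22} but keeping track of the extra wedge summands ($C^7_\eta$, $C^{7,\hat s_j}$) that appear in our setting. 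Since these extra summands only ever carry $\alpha_1$ in a way dominated by the spheres (Lemma \ref{order of 3 comp pi8}(1)), they cause no genuine new difficulty, but this has to be checked.
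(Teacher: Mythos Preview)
The paper does not give its own proof of Theorem \ref{thm:3-local}; it is stated with the attribution ``cf.\ Theorem 1.1 of \cite{CS22}'' and is immediately followed only by the remark that the equivalences desuspend twice to recover the result of \cite{CS22}. There is therefore no in-paper argument to compare your plan against beyond the ambient machinery of Section \ref{sec:An2}.

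Your outline is essentially the argument one would reconstruct from \cite{CS22} together with Lemmas \ref{order of 3 comp pi8} and \ref{lem:p-odd:n=3:P1}, and it is structurally sound. One genuine slip: you assert that the $P^6(3^{r'_j})$-component of $g_{(3)}$ is \emph{necessarily} $i^{r'_j}_5\alpha_1(5)$ and hence non-zero. The paper (and \cite[Lemma 3.3]{CS22}) only says that this component \emph{lies in the group generated by} $i^{r'_j}_3\alpha_1(3)$; in the explicit formula for $h''$ it appears with an undetermined coefficient $a'_j\in\Z/3$. If every Moore component were automatically non-zero, case \ref{3-local:1} would be impossible whenever $T_3\neq 0$, contradicting the statement. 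Once you allow $a'_j\in\{0,\pm 1\}$ your reduction scheme goes through exactly as you describe: Lemma \ref{order of 3 comp pi8}\,(\ref{a<ira}) lets a surviving sphere-type $\alpha_1$ absorb all Moore components (case \ref{3-local:2}), and part (\ref{ir'a<ira}) reduces the remaining situation to a single Moore component (case \ref{3-local:3}). Matching the three alternatives to the $\PP$/$\beta_r$ hypotheses is precisely the content of \cite[Theorem 1.1]{CS22}, so deferring that identification to the cited reference is what the paper itself does.
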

 Note that one can desuspend the homotopy equivalences in Theorem \ref{thm:3-local} twice to get the homotopy type of $\Sigma M$ \cite{CS22}.

\begin{proof}[Proof of Theorem \ref{thm:6mfds total}] 
In the expression (\ref{equ:Sigma g7}), we may put
\begin{enumerate}[(1)]
	\item $4c^S_j\cdot \nu=c^2_j\cdot \eta^3+ c^3_j\cdot \alpha_1(5)$ for some $c^2_j\in \{0,1\},c^3_j\in \{0,1,2\}$;
	\item $4c^{\eta}_ji_5^{\eta} \nu=c^{\eta}_ji_5^{\eta}\alpha_1(5)$, $4\hat{c}_j\hat{i}^{\hat{s}_{j}}_5 \nu=\hat{c}_j\hat{i}^{\hat{s}_{j}}_5 \alpha_1(5)$  (or $4c^{\eta}_ji_5^{\eta} \nu=-c^{\eta}_ji_5^{\eta}\alpha_1(5)$, $4\hat{c}_j\hat{i}^{\hat{s}_{j}}_5 \nu=-\hat{c}_j\hat{i}^{\hat{s}_{j}}_5 \alpha_1(5)$, then we also denote the coefficients   $-c^{\eta}_j$ and $-\hat{c}_j$ by $c^{\eta}_j$ and $\hat{c}_j$ respectively).
\end{enumerate}
 Then (\ref{equ:Sigma g7}) becomes 
 \begin{equation} \label{equ:Sigma g7 2,3 part} 
		\begin{aligned}
		\Sigma h''	=&\sum_{j=1}^{m_3} a'_j\cdot i_5^{r'_j}\alpha_{1}(5)+\sum_{j=1}^{l-k-t_3} \big(c^2_j\cdot\eta^3+ c^3_j\alpha_1(5)\big)+\sum_{j=1}^{k}c^{\eta}_j\cdot i_5^{\eta} \alpha_1(5)\\
		&+\sum_{j=1}^{l-k-t_2} d^S_j\cdot \eta+\sum_{j=1}^{t_0}\big(x_j\cdot \tilde{\eta}_{s_j}+y_j\cdot i_{6}^{s_j}\eta^2\big)+\sum_{j=1}^{t_1}\big(a_j\cdot \tilde{\eta}_{r_j}\eta+c_j\cdot i_{5}^{r_j}\eta^{3}\big)\\
		&+\sum_{j=1}^{t_2}\big(\bar{a}_j\cdot \bar{i}_{P}^{\bar{r}_j}\tilde{\eta}_{\bar{r}_j}\eta\big)+\sum_{j=1}^{t_3}\big(\hat{c}_j\cdot \hat{i}^{\hat{s}_{j}}_5 \alpha_1(5)+\hat{y}_j\cdot \hat{i}^{\hat{s}_j}_6\eta^2\big)\\
		&+\sum_{j=1}^{t_4}\big(\check{a}_j\cdot \check{i}_{P}^{\check{r}_j}\tilde{\eta}_{\check{r}_j}\eta+\check{y}_j\cdot\check{i}^{\check{s}_j,\check{r}_j}_6\eta^2\big). 
	\end{aligned}
 \end{equation}
Here we only give the proofs of the cases (\ref{Thm:P1 neq 0:b:4}),  (\ref{Thm:P1 neq 0:b:5}), (\ref{Thm:P1 neq 0:c:1}) and (\ref{Thm:P1 neq 0:c:2}) of Theorem \ref{thm:6mfds total},  the proofs of other cases are similar or easier. 
\medskip 

\underline{Proof of the cases  (\ref{Thm:P1 neq 0:b:4}) and  (\ref{Thm:P1 neq 0:b:5}).}	
Suppose that $\Sq^2$ acts trivially on $H^4(M;\z{})$ and that $\Theta$ acts non-trivially on $H^3(M;\z{})$. From the proof of case (3) of Theorem \ref{thm:6mfds 2-local} (assuming $\hat{y}_{j_0}=1$), we know that there is a self-homotopy equivalence of $\Sigma V''_6$ such that 
\begin{equation}\label{equ:g8} 
		\begin{aligned}
		 	\Sigma h''\sim \hbar=&\sum_{j=1}^{m_3} a'_j\cdot i_5^{r'_j}\alpha_{1}(5)+\sum_{j=1}^{l-k-t_3} c^3_j\cdot \alpha_1(5)+\sum_{j=1}^{k} c^{\eta}_j\cdot i_5^{\eta} \alpha_1(5)\\
		&+\sum_{j=1,j\neq j_0}^{t_3}\big(\hat{c}_j\cdot \hat{i}^{\hat{s}_{j}}_5 \alpha_1(5)+\hat{c}_{j_0}\cdot \hat{i}^{\hat{s}_{j_0}}_5 \alpha_1(5)+\hat{i}^{\hat{s}_{j_0}}_6\eta^2\big).  
	\end{aligned}
\end{equation}	
	Then (\ref{equ:g8}) can be obtained since we can choose certain a self-homotopy equivalence of $\Sigma V''_6$ which does not affect the $3$-torsion summation terms of (\ref{equ:Sigma g7 2,3 part}), by Lemma \ref{lem:total order 2, 3}.
	Since $\PP\colon H^{3}(\Sigma M;\Z/3)\to H^{7}(\Sigma M;\Z/3)$ is non-trivial, at least one of $a'_j$ , $c_j^3$, $c^{\eta}_j$, $\hat{c}_j$ is nonzero. 
	
	If  \textbf{Condition $\star$} holds, then by the proof of \cite[Proposition 5.2]{lipc23},  we have  
	\[c_j^3= c^{\eta}_j=\hat{c}_j \text{ for all $j$ and }   a'_{j'_0}=1\text{ for some $j'_0$}.\]  
	 Moreover, by Lemma \ref{order of 3 comp pi8} $(iii)$,  we can assume $a'_{j}=0$ for all $j\neq j'_0$. It follows  that 
    \begin{align}
\Sigma h''\sim \hbar\sim \hat{i}^{\hat{s}_{j_0}}_6\eta^2+ i_5^{r'_{j'_0}}\alpha_{1}(5), \label{sim:a'j0=1}
    \end{align}
which implies a homotopy equivalence
	 \[\bm C_{\Sigma h''}\simeq\big(\Sigma V''_6/(C^{7,\hat{s}_{j_0}}\vee P^{6}(3^{r'_{j'_0}}))\big)\vee \big((C^{7,\hat{s}_{j_0}}\vee  P^{6}(3^{r'_{j'_0}}))\cup_{\smatwo{\hat{i}^{\hat{s}_{j_0}}_6\eta^2}{i_5^{r'_{j'_0}}\alpha_{1}(5)}}e^9\big).\]
	
		If  \textbf{Condition $\star$} does not hold, then we may assume that one of $c_j^3$, $c^{\eta}_j$, $\hat{c}_j$ is nonzero; by Lemma \ref{order of 3 comp pi8} (ii), we assume  $a'_j=0$ for any $j$.
		
	 Case 1.  If $l-k-t_3\neq 0$, then there is a wedge summand $S^5$ in $\Sigma V''_6$. By (i) of  Lemma \ref{order of 3 comp pi8}, we get 
	 \begin{align}
	 	\matwo{0}{\alpha_X}\sim \matwo{\alpha_{1}(5)}{0}\colon S^8\to  S^5\vee X  \label{sim:S5vee X}
	 \end{align}
	where $\alpha_X=\alpha_{1}(5), i_5^{\eta}\alpha_{1}(5), \hat{i}^{\hat{s}_{j}}_5\alpha_{1}(5)$ for $X=S^5, C_{\eta}^7, C^{7,\hat{s}_{j}}$. So 
		\[\Sigma h''\sim \hbar\sim \hat{i}^{\hat{s}_{j_0}}_6\eta^2+\alpha_{1}(5),\]
		which implies  
		\[\bm C_{\Sigma h''}\simeq \big(\Sigma V''_6/(C^{7,\hat{s}_{j_0}}\vee S^5)\big)\vee \big((C^{7,\hat{s}_{j_0}}\vee  S^5)\cup_{\smatwo{\hat{i}^{\hat{s}_{j_0}}_6\eta^2}{\alpha_{1}(5)}}e^9\big).\]
		
	 Case 2. If $l-k-t_3=0$, $k \neq 0$, i.e,  there is no wedge summand $S^5$, but 
	  a wedge summand $C_{\eta}^7$ in $\Sigma V''_6$, then 
	   by Lemma \ref{order of 3 comp pi8} (i), we get 
	  \[ \matwo{0}{\alpha_X}\sim \matwo{i_{5}^{\eta}\alpha_{1}(5)}{0}\colon S^8\to   C_{\eta}^7\vee X\]
	  where $\alpha_X=i_5^{\eta}\alpha_{1}(5), \hat{i}^{\hat{s}_{j}}_5\alpha_{1}(5)$ for $X=C_{\eta}^7, C^{7,\hat{s}_{j}}$. So 
	  \[\Sigma h''\sim \hbar\sim \hat{i}^{\hat{s}_{j_0}}_6\eta^2+i_5^{\eta}\alpha_{1}(5),\]
	  which implies 
	  \[\bm C_{\Sigma h''}\simeq \big(\Sigma V''_6/(C^{7,\hat{s}_{j_0}}\vee C_{\eta}^7)\big)\vee \big((C^{7,\hat{s}_{j_0}}\vee  C_{\eta}^7)\cup_{\smatwo{\hat{i}^{\hat{s}_{j_0}}_6\eta^2}{i_5^{\eta}\alpha_{1}(5)}}e^9\big).\]
	  
	 Case 3.  If $l-k-t_3=0$ and  $k=0$, i.e,  there are no wedge summands $S^5$ and $C_{\eta}^7$ in $\Sigma V''_6$, then at least one 
	  $\hat{c}_{j}\neq 0$.   By (i) of  Lemma \ref{order of 3 comp pi8} and  Lemma \ref{lem:total order 2, 3}, we get 
	   \[ \matwo{\hat{i}^{\hat{s}_{j_0}}_6\eta^2+\hat{c}_{j_0}\cdot\hat{i}^{\hat{s}_{j_0}}_5\alpha_{1}(5)}{\hat{i}^{\hat{s}_{j}}_5\alpha_{1}(5)}\sim \matwo{\hat{i}^{\hat{s}_{j_0}}_6\eta^2+\hat{i}^{\hat{s}_{j_0}}_5\alpha_{1}(5)}{0}\colon S^8\to   C^{7,\hat{s}_{j_0}}\vee C^{7,\hat{s}_{j}},\]
	    which implies a homotopy equivalence 
	   \[\bm C_{\Sigma h''}\simeq \big(\Sigma V''_6/C^{7,\hat{s}_{j_0}}\big)\vee \big(C^{7,\hat{s}_{j_0}}\cup_{\hat{i}^{\hat{s}_{j_0}}_6\eta^2+\hat{i}^{\hat{s}_{j_0}}_5\alpha_{1}(5)}e^9\big).\]
	Thus we complete the proof of the cases  (\ref{Thm:P1 neq 0:b:4}),  (\ref{Thm:P1 neq 0:b:5}) of Theorem \ref{thm:6mfds total} by the homotopy equivalence
	\begin{align}
W_7\simeq\Sigma V''_6\vee dS^6\vee P^6(T_{\geq 5})\vee P^7(T_{\geq 3}). \label{equ:W7 V6''}
	\end{align}

\underline{Proof of the cases (\ref{Thm:P1 neq 0:c:1}) and  (\ref{Thm:P1 neq 0:c:2}).}	
	Suppose that $\Sq^2$ acts trivially on $H^4(M;\z{})$, $\Theta$ acts trivially on $H^4(M;\z{})$,  and that $\mathbb{T}$ acts non-trivially on $H^2(M;\z{})$. Then by the proof of the case (\ref{thm-spin Theta=0, Tneq0}) of Theorem \ref{thm:6mfds 2-local} (assume  $c^S_{j_0}=1$),  there is a self-homotopy equivalence of $\Sigma V''_6$ such that 
	\begin{align}
		 	\Sigma h''\sim \hslash=&\sum_{j=1}^{m_3} a'_j\cdot i_5^{r'_j}\alpha_{1}(5)+\sum_{j=1,j\neq j_0}^{l-k-t_3} c^3_j\alpha_1(5)+\eta^3 +c^3_{j_0}\alpha_1(5) \nonumber\\
		&+\sum_{j=1}^{k} c^{\eta}_j\cdot i_5^{\eta} \alpha_1(5)+\sum_{j=1}^{t_3}\hat{c}_j\cdot \hat{i}^{\hat{s}_{j}}_5 \alpha_1(5). \label{equ:g'8}  
	\end{align}

	  If \textbf{Condition $\star$} does not hold,  the assumption that $\PP$ acts non-trivially on $H^{3}(\Sigma M;\Z/3)$ implies that at least one of $c_j^3$, $c^{\eta}_j$, $\hat{c}_j$ is nonzero and   $a'_j=0$ for any $j$.  
	Since by the assumption that $c^S_{j_0}=1$, $S^5$ is a wedge summand of $\Sigma V''_6$, by (\ref{sim:S5vee X}), we get 
	\[\Sigma h''\sim \hslash\sim \eta^3 +\alpha_1(5).\]
	Hence we have 
	\[\bm C_{\Sigma h''}\simeq\big(\Sigma V''_6/S^5\big)\vee \big(S^5\cup_{\eta^3 +\alpha_1(5)}e^9\big)\simeq\big(\Sigma V''_6/S^5\big)\vee \big(S^5\cup_{4\nu}e^9\big).\] 
	
	If \textbf{Condition $\star$} holds, then all of $c_j^3$, $c^{\eta}_j$, $\hat{c}_j$ are zero for any $j$ and   $a'_{j'_0}=1$ for some  $j'_0$. Then similar arguments to that of (\ref{sim:a'j0=1}) show that 
	\[\Sigma h''\sim \hslash\sim \eta^3+i_{5}^{r'_{j'_0}}\alpha_1(5),\]
	 which implies a homotopy equivalence
	\[\bm C_{\Sigma h''}\simeq\big(\Sigma V''_6/(S^5\vee P^{6}(3^{r'_{j'_0}}))\big)\vee\big((S^5\vee P^{6}(3^{r'_{j'_0}}))\cup_{\smatwo{\eta^3}{i_{5}^{r'_{j'_0}}\alpha_{1}(5)}}\!\!e^9\big).\]
	We complete the proofs of the cases  (\ref{Thm:P1 neq 0:b:4}),  (\ref{Thm:P1 neq 0:b:5}) of Theorem \ref{thm:6mfds total} by applying (\ref{equ:W7 V6''}).
\end{proof}

\bibliographystyle{amsplain}
\bibliography{refs6mfd}

\end{document}